\date{\today}
\author{Bertrand Deroin \and Romain Dujardin}
\thanks{B.D.'s research was partially supported by ANR-08-JCJC-0130-01,  ANR-09-BLAN-0116.}
\address{CNRS \\ D\'epartement de Math\'ematique d'Orsay \\ B\^atiment 425,
Universit\'e de Paris Sud,
91405 Orsay cedex, France.}
\email{bertrand.deroin@math.u-psud.fr}
\address{LAMA  \\
Universit\'e Paris-Est Marne-la-Vall\'ee  \\
5 boulevard Descartes \\
77454 Champs sur Marne  \\
France}
\email{romain.dujardin@univ-mlv.fr}
\title[Lyapunov exponents for surface groups representations]{Lyapunov exponents for surface groups representations.}
\newcommand{\cc}{\mathbb{C}}
\newcommand{\rr}{\mathbb{R}}
\newcommand{\dd}{\mathbb{D}}
\newcommand{\nn}{\mathbb{N}}
\newcommand{\pp}{\mathbb{P}}
\newcommand{\ee}{\mathbb{E}}
\newcommand{\hh}{\mathbb{H}}
\newcommand{\e}{\varepsilon}
\newcommand{\cv}{\rightarrow}
\newcommand{\fr}{\partial}
\newcommand{\om}{\Omega}
\newcommand{\set}[1]{\left\{#1\right\}}
\newcommand{\norm}[1]{\left\Vert#1\right\Vert}
\newcommand{\abs}[1]{\left\vert#1\right\vert}
\newcommand{\cd}{{\cc^2}}
\newcommand{\pu}{{\mathbb{P}^1}}
\newcommand{\rest}[1]{ \arrowvert_{#1}}
\newcommand{\unsur}[1]{\frac{1}{#1}}
\newcommand{\lrpar}[1]{\left(#1\right)}
\newcommand{\la}{\lambda}
\newcommand{\lo}{{\lambda_0}}
\newcommand{\La}{\Lambda}
\newcommand{\SL}{\mathrm{SL}(2,\mathbb C)}
\newcommand{\PSL}{\mathrm{PSL}(2,\mathbb C)}
\newcommand{\tbif}{T_{\mathrm{bif}}}
\newcommand{\cX}{\mathcal{X}}
\newcommand{\itm}{\item[-]}
\DeclareMathOperator{\supp}{Supp}
\DeclareMathOperator{\vol}{vol}
\DeclareMathOperator{\tr}{tr}
\DeclareMathOperator{\length}{length}
\DeclareMathOperator{\bif}{Bif}
\DeclareMathOperator{\stab}{Stab}
\DeclareMathOperator{\argcosh}{argcosh}
\newtheorem{prop} {Proposition} [section]
\newtheorem{thm}[prop] {Theorem}
\newtheorem{lem}[prop] {Lemma}
\newtheorem{cor}[prop]{Corollary}
\newtheorem{theo}{Theorem}
\newtheorem{propo}[theo]{Proposition}
\newtheorem{defprop}[prop]{Definition-Proposition}
\theoremstyle{remark}
\newtheorem{rmk}[prop]{Remark}
\begin{document}

\begin{abstract}
Let $(\rho_\la)_{\la\in \La}$ be a holomorphic family of representations of a surface group $\pi_1(S)$ into 
$\mathrm{PSL}(2, \mathbb{C})$, where $S$ is a topological (possibly punctured) surface with negative Euler characteristic. 
Given a  structure of Riemann surface of finite type on $S$ we construct a bifurcation current on the parameter space $\La$, 
that is a  (1,1) positive closed current attached to the bifurcations of the family. It is defined as the $dd^c$ of the Lyapunov exponent of the representation with respect to the Brownian motion on the Riemann surface 
$S$, endowed with its    Poincar\'e metric. We   show that this bifurcation current describes the asymptotic distribution of various
 codimension 1   phenomena in $\La$. For instance, the random hypersurfaces of $\La$ defined by the condition that 
   a random closed geodesic on $S$ is mapped under $\rho_\la$ to a parabolic element or the identity
    are asymptotically  equidistributed with respect to the bifurcation current.  The proofs are based on our previous work \cite{kleinbif}, and on a careful control of a discretization procedure of the Brownian motion. 
\end{abstract}

\maketitle

\section*{Introduction}

Let $G$ be a finitely generated group and $(\rho_\la)_{\la\in \La}$ be a holomorphic family of representations of $G$ 
into the M\"obius group $\PSL$.
Viewing these representations as dynamical systems on $\pu$, 
we can divide the parameter space $\La$  into a {\em stability locus}, where the action of $G$ is locally quasi-conformally structurally stable, and its complement, the {\em bifurcation locus}.

 In a previous article \cite{kleinbif}, under natural assumptions on the family $(\rho_\la)_{\la\in \La}$, 
we   showed how to design {\em bifurcation currents} on $\La$. These 
are positive closed currents aiming at an understanding of 
 the measurable and complex analytic structure of the bifurcation locus. 
 This construction relies on probabilistic 
ideas, let us briefly recall some of 
its key steps. We start by choosing a probability measure $\mu$ on $G$, which gives rise to a random walk  on $G$. An example is 
 the nearest neighbor random walk on the Cayley graph associated  to some symmetric  set of generators. Fix any matrix norm 
$\norm{\cdot}$ on $\PSL$.
Then the theory of random products of matrices implies that  for any non-elementary representation $\rho$, for almost every sample 
path $(g_n\cdots g_1)_{n\geq 1}$ 
of the random walk, $\unsur{n}\log \norm{\rho(g_n\cdots g_1)}$  converges to the {\em Lyapunov exponent} 
$\chi_\mu(\rho)$. It is easily seen
that  the Lyapunov exponent function $\la\mapsto \chi_\mu(\rho_\la)$ is plurisubharmonic (psh) on $\La$, 
and we define the  bifurcation current  by the formula $T_{{\rm bif},\mu} = dd^c_\la \chi_\mu(\rho_\la)$. 

It turns out that the support of $T_{{\rm bif},\mu}$ is 
precisely equal to the bifurcation locus. 
 Also, $T_{{\rm bif},\mu}$ describes the asymptotic 
 distribution of certain dynamically defined sequences of subvarieties in $\La$. Indeed,  
  for $t\in \cc$, and $g\in G$,   
let $Z(g,t)$ be the hypersurface in $\La$ defined by 
$$Z(g,t)  = \set{\la\in \La, \ \tr^2(\rho_\la(g)) = t}$$ ($\tr$ stands for  the trace). 
 The emblematic value is 
 $t=4$,   corresponding  to those parameters where $\rho_\la(g)$ becomes accidentally parabolic or equal to the identity, which 
are known to be dense in the bifurcation locus. Then the following  equidistribution statement holds:
for  almost every sample path   of the random walk, the sequence of divisors 
 $\unsur{2n}   Z( g_n\cdots g_1,t)$ converges to $T_{{\rm bif},\mu}$ when $n\cv\infty$. 

We note that analogous results were previously known in the context of holomorphic  families of 
rational mappings on $\pu$. An essential 
 motivation in \cite{kleinbif} was actually to fill the corresponding line in the famous Sullivan dictionary between 
rational iteration and Kleinian group theory. 
The reader is referred to \cite{survey} for an overview on these issues. 

\medskip

In this paper we specialize to the case where $G = \pi_1(S)$ is the 
fundamental group of an orientable surface of genus $g$ with $n$ punctures,   such that $\chi(S) = 2-2g-n<0$. 
As before we consider a  holomorphic family 
$(\rho_\la)_{\la\in \La}$ of non-elementary 
representations of $G$ into $\PSL$ (we can equally deal with  representations modulo conjugacy)
sending  simple loops about the punctures to parabolic transformations. 
Such representations will be referred to as {\em parabolic}. 
For the sake of  this introduction, let us  assume without further notice that these representations are faithful at 
generic parameters 
and not conjugate to each other in $\PSL$.
Our main purpose  is to construct a bifurcation current  on $\La$ 
{\em associated    to the choice of a conformal structure on} $S$. 
 For this, let  $X$ be a structure of Riemann 
surface of finite type on $S$ (that is, $X$ is biholomorphic to a compact Riemann surface minus $n$ points) and endow $X$ with its 
unique hyperbolic metric of curvature $-1$. Recall that it is of finite volume. This data in turn gives birth to a wealth of 
mathematical objects:
\begin{itemize}
\itm a dynamical system: the geodesic flow on the unit tangent bundle  $T^1X$;
\itm a stochastic process:   Brownian motion on $X$; 
\itm  an isomorphism between $G$ and a lattice $\Gamma$ 
in $\mathrm{PSL}(2, \rr)$ induced by the uniformization of $X$
\end{itemize}
We will demonstrate that our bifurcation current fits naturally with all these structures. Let us already point out that 
as in the previous case, this bifurcation current will be defined by taking the $dd^c$ of a certain Lyapunov exponent function.

\medskip

 A family of representations of $G$ (modulo conjugacy) naturally associated to the Riemann surface structure  is obtained  by 
 considering the 
 holonomies of holomorphic  projective structures on $X$. The parameter space for 
 this family of representations is biholomorphic to  $\cc^{3g-3+n}$  and is 
 the ambient space for the so-called Bers 
 embedding of Teichm\"uller space. As such it plays an  important role in Teichm\"uller theory (see Dumas 
 \cite{dumas} for a nice introduction to this topic).  Notice that the much studied 
    cases $(g=0, n=4)$ and $(g=1, n=1)$,  
corresponding to one-dimensional Teichm\"uller spaces, are associated to punctured surface groups. 
This is a motivation for allowing for non-compact surfaces, which, as we will see,   are a source of technical difficulty.

 We see that the space of projective structures 
  carries a natural bifurcation current (resp. Lyapunov exponent function). 
 The Sullivan dictionary predicts that the structure of this space should 
 resemble that of the space of polynomials in holomorphic dynamics. 
In a companion paper \cite{bers2}  we    introduce an  invariant, 
the {\em degree} of a projective structure, 
and show that it is closely related to the Lyapunov exponent of its holonomy. 
This   enables us to unveil a new analogy between spaces of projective structures and spaces of polynomials.

\medskip

Let us discuss  more precisely the contents of the present  paper. 
As before, the definition of the  bifurcation current depends on the construction of a Lyapunov exponent function on $\La$, 
that  we now describe. Mark a point $\star$ in $X$, 
 which will serve as a base point for the fundamental group, and equipp $X$ with its Poincar\'e metric. For  a Brownian path $\omega : [0,\infty) \rightarrow X$ starting at $\omega (0) = \star$, we do the following construction: for $t>0$, we append $\omega \rest{[0,t]}$ with a shortest path returning to $\star$, to  get a closed loop $\omega_t$, 
hence an element $[\omega_t]\in \pi_1(X,\star)$ --we disregard any possible ambiguity in the choice of the returning 
path.  


We then have the following result

\begin{propo}\label{propo:A}
Let $X$ be a hyperbolic Riemann surface of finite type 
and $\rho:G\cv\PSL$ be a non-elementary parabolic representation. Then there exists a positive real number 
$\chi_{\rm Brown}(\rho)$ such that for almost every Brownian path $\omega$ starting at $\star$, 
$ \unsur{t}\log \norm{\rho([\omega_t])}$ converges to $\chi_{\rm Brown}(\rho)$ when $t\cv\infty$. 
\end{propo}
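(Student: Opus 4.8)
The plan is to transport the problem to the universal cover $\hh$ of $X$, on which $G$ acts through the uniformizing lattice $\Gamma\subset\mathrm{PSL}(2,\rr)$, to recognize $\norm{\rho([\omega_t])}$ as the norm of a product of random matrices generated by the Brownian motion, and then to apply Kingman's subadditive ergodic theorem followed by an interpolation from integer to real times. Concretely, lift a Brownian path $\omega$ on $X$ with $\omega(0)=\star$ to a Brownian path $\widetilde\omega$ on $\hh$ with $\widetilde\omega(0)=o$, a fixed lift of $\star$, and let $D$ be the Dirichlet fundamental domain for $\Gamma$ centered at $o$. Closing $\omega\rest{[0,t]}$ with a shortest return loop at $\star$ produces precisely the element $[\omega_t]=\gamma$ characterized by $\widetilde\omega(t)\in\gamma D$, that is, by $\gamma o$ being a point of the orbit $\Gamma o$ closest to $\widetilde\omega(t)$; in particular $\dist_\hh(o,[\omega_t]\,o)\le 2\,\dist_\hh(o,\widetilde\omega(t))$.

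I would then manufacture an ergodic dynamical system carrying a genuine linear cocycle. On the space $\mathcal O$ of Brownian paths of $\hh$ issued from $D$, endowed with the probability measure $\mathbb P=\tfrac{1}{\vol(X)}\int_D W_x\,dx$ ($W_x$ being the Wiener measure from $x$), the folded time-$1$ skeleton
\[
\widehat\theta\,\widetilde\omega:\ t\longmapsto\sigma(\widetilde\omega)^{-1}\widetilde\omega(1+t),\qquad\text{where }\sigma(\widetilde\omega)\in\Gamma\text{ satisfies }\sigma(\widetilde\omega)^{-1}\widetilde\omega(1)\in D,
\]
preserves $\mathbb P$ and is ergodic, because the time-$1$ heat kernel is a strictly positive Markov kernel on the finite-volume surface $X$ with invariant measure the normalized area. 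A straightforward induction shows that $\Sigma_n:=\sigma(\widetilde\omega)\sigma(\widehat\theta\widetilde\omega)\cdots\sigma(\widehat\theta^{\,n-1}\widetilde\omega)$ is the closest-orbit element of $\widetilde\omega(n)$, so for a path issued from $o$ one has $[\omega_n]=\Sigma_n$ and
\[
\rho([\omega_n])=\rho(\sigma(\widetilde\omega))\,\rho(\sigma(\widehat\theta\widetilde\omega))\cdots\rho(\sigma(\widehat\theta^{\,n-1}\widetilde\omega)),
\]
a genuine $\SL$-valued multiplicative cocycle over $(\mathcal O,\widehat\theta,\mathbb P)$ (after choosing a measurable lift, which is harmless for norms).

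The heart of the matter is integrability, and this is where the non-compactness of $X$ and the parabolicity hypothesis are used. The first, purely geometric, ingredient is the estimate
\[
\log^{+}\norm{\rho(\gamma)}\ \le\ C\bigl(1+\dist_\hh(o,\gamma o)\bigr)\qquad(\gamma\in\Gamma),
\]
which holds because $\rho$ sends the parabolic subgroups fixing the cusps to parabolic transformations, so $\norm{\rho(p^{k})}$ grows only linearly in $k$ whereas $\dist_\hh(o,p^{k}o)\sim 2\log k$; the general case follows by cutting the geodesic $[o,\gamma o]$ along the thick-thin decomposition of $X$ and using that $\rho$ of a bounded word in a fixed generating set has bounded norm. (This fails without the parabolicity assumption: a loxodromic $\rho(p)$ would make $\log\norm{\rho(p^{k})}$ grow linearly in $k$.) The second ingredient is that $\dist_\hh(o,\widetilde\omega(1))\in L^{1}(\mathbb P)$, since it is dominated by $\dist_\hh(o,\widetilde\omega(0))+\dist_\hh(\widetilde\omega(0),\widetilde\omega(1))$, the second term has finite expectation (Brownian displacement on $\hh$ over bounded time), and $\int_D\dist_\hh(o,x)\,dx<\infty$ because $\dist_\hh(o,\cdot)$ grows only logarithmically inside the cusps while the cuspidal volume element decays like $dv/v^{2}$. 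Combining these, $\log\norm{\rho(\sigma)}\in L^{1}(\mathbb P)$, and Kingman's theorem applied to $\widetilde\omega\mapsto\log\norm{\rho(\Sigma_n(\widetilde\omega))}$ yields a constant $\chi\ge 0$ with $\tfrac1n\log\norm{\rho([\omega_n])}\to\chi$, $\mathbb P$-almost surely.

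Three points remain. To pass from $\mathbb P$ to the Wiener measure $W_o$ (which is $\mathbb P$-negligible): since $\norm{\rho(g)}$ and $\norm{\rho(g)^{-1}}$ are comparable in $\SL$, the event that $\tfrac1n\log\norm{\rho(\Sigma_n)}$ converges to $\chi$ is exactly $\widehat\theta$-invariant, so the function $x\mapsto W_x(\cdot)$ of that event is $\vol$-almost everywhere equal to $1$ and is fixed by the heat kernel, hence identically $1$ by absolute continuity of the heat kernel, in particular at $x=o$. To interpolate to real times: for $t\in[n,n+1]$ one has $\bigl|\log\norm{\rho([\omega_t])}-\log\norm{\rho([\omega_n])}\bigr|\le C\bigl(1+\dist_\hh(o,\Sigma_n^{-1}\Sigma_t\,o)\bigr)$, and since $\Sigma_n o$ and $\Sigma_t o$ are the closest-orbit points of the nearby endpoints $\widetilde\omega(n),\widetilde\omega(t)$ this is $\lesssim 1+\log\mathrm{ht}(\widetilde\omega(n))+\sup_{s\in[n,n+1]}\dist_\hh(\widetilde\omega(n),\widetilde\omega(s))$, where $\mathrm{ht}$ records the depth in the cusps; the first summand is $o(n)$ almost surely because $\log\mathrm{ht}$ is $\vol$-integrable (the cusp estimate again, plus Birkhoff and the same invariance trick), and the second because the unit-time increments are identically distributed with finite mean. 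This gives $\tfrac1t\log\norm{\rho([\omega_t])}\to\chi$ $W_o$-almost surely. Finally, $\chi>0$: if $\chi$ vanished, the two Lyapunov exponents of the cocycle would agree, forcing a $\rho(G)$-invariant probability measure on $\pu$ and hence $\rho$ elementary, contrary to hypothesis; alternatively one compares $\chi$ with the Lyapunov exponent of the discretized random walk on $\Gamma$ --- whose step distribution generates $\Gamma$, because a unit-length Brownian path winds around any prescribed loop with positive probability --- and invokes the positivity theorem for non-elementary random products from \cite{kleinbif}. I expect the main obstacle to be exactly the geometric estimate bounding $\log\norm{\rho(\gamma)}$ by $\dist_\hh(o,\gamma o)$ and the attendant integrability bookkeeping near the cusps, which is the non-compact incarnation of the ``careful control of a discretization procedure'' alluded to in the abstract.
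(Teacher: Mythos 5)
Your construction of the limit is essentially correct, and it takes a genuinely different route from the paper. The paper works on the suspension $M_\rho$: it builds a $\rho$-equivariant Lipschitz map $\hh^2\cv\hh^3$ which is an isometric immersion in the cusps (this is where parabolicity enters, Proposition \ref{prop:lipschitz}), deduces a Lipschitz family of spherical metrics on the fibers, applies Kingman to the continuous-time holonomy cocycle $\log\norm{h_\rho(\omega\rest{[0,t]})}$ under the stationary measure $W=\int W_x\,dx$, and controls the closing-up error via Sullivan's logarithm law combined with the shadowing of Brownian paths by geodesic rays (Proposition \ref{prop:lyap2}). You instead fold the time-$1$ skeleton into a measure-preserving map $\widehat\theta$, apply Kingman to the group-valued cocycle $\Sigma_n$, and replace the Sullivan/shadowing step by the cleaner remark that $d_X(\omega(n),\star)=o(n)$ follows from Birkhoff applied to the $\mathrm{vol}$-integrable function $d_X(\cdot,\star)$, plus control of unit-time oscillations; the transfer from $\mathbb P$ to $W_\star$ by harmonicity of $x\mapsto W_x(A)$ and absolute continuity of the heat kernel matches the paper's own argument. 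Your key inequality $\log\norm{\rho(\gamma)}\leq C(1+\dist_\hh(o,\gamma o))$ is exactly Corollary \ref{cor:compar}, which the paper derives from the equivariant Lipschitz map; your thick--thin sketch is a legitimate alternative, but it is where the real cusp work is hidden and would have to be written out carefully. All of this gives the existence of the limit.

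The genuine gap is the strict positivity of $\chi$, which is part of the statement. Your first argument --- vanishing exponents force a $\rho(G)$-invariant probability measure on $\pu$, hence elementarity --- is Furstenberg's theorem for \emph{i.i.d.} products; your cocycle is not of that type: the increments $\sigma(\widehat\theta^k\widetilde\omega)$ form a Markov chain (the law of the next increment depends on where $\widetilde\omega(k)$ sits inside the fundamental domain), and for a general stationary ergodic $\SL$-cocycle zero exponents do \emph{not} yield an invariant measure on $\pu$ for the group generated by its values; one needs an invariance-principle or harmonic-measure argument adapted to this Markov structure, which you do not supply. Your second argument has the same problem in disguise: the ``discretized random walk on $\Gamma$'' coming from the time-$1$ skeleton is not a random walk (no independence of increments), so the positivity results of \cite{kleinbif} do not apply to it; the fact that a unit-time Brownian path realizes any loop with positive probability only says the step distribution generates $\Gamma$, not that the steps are independent or satisfy a moment condition. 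To compare $\chi_{\rm Brown}$ with the exponent of an honest i.i.d. walk --- which is how the paper proves positivity, via $\chi_\mu=\tau\,\chi_{\rm Brown}$ in Theorem \ref{thm:support brownien} and then Furstenberg --- one needs a Furstenberg/Lyons--Sullivan type discretization together with integrability (indeed exponential moments) of the associated stopping time and the moment property of the resulting measure $\mu$; in the finite-volume case this is precisely the technical core of \S\ref{subs:discretization}--\ref{subs:proof} (Propositions \ref{prop:exponential moment stopping time} and \ref{prop:moment}), and it cannot be dismissed in one line. As written, your proposal proves convergence but not positivity.
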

 
In any holomorphic family $(\rho_\la)_{\la\in \La}$ of non-elementary representations of $G$, 
we thus obtain a Lyapunov exponent function 
$\la\mapsto \chi_{\rm Brown}(\rho_\la)$, and we define the bifurcation current associated to the Riemann surface structure $X$ 
as $\tbif = dd^c_\la(\chi_{\rm Brown}(\rho_\la))$. From now on we will refer to it as the {\em natural bifurcation current}.
We observe that this current depends only (up to a multiplicative constant) on the conformal structure on $X$, 
and not on the conformal metric on $X$ which was chosen to generate the Brownian motion 
(in the case where $X$ has punctures   some extra care is needed about the behaviour of the metric near the cusps). Indeed, by conformal invariance of the Brownian motion in dimension $2$, the Lyapunov exponent function associated to any other choice of conformal metric will be a constant multiple of $\chi_{\rm Brown}$ 
(see Remark \ref{rmk:clock}  for details). 


A similar procedure  yields a Lyapunov exponent $\chi_{\rm geodesic} =   \chi_{\rm Brown}$ 
 by using a generic geodesic starting at $\star$ instead of a Brownian path (with our normalization, the
 Brownian motion in the hyperbolic plane escapes at unit speed). In particular   $\tbif =   dd^c \chi_{\rm geodesic}$. A statement analogous to Proposition \ref{propo:A} in this context was already contained in the work of Bonatti, Gomez-Mont and Viana \cite{bonatti gomez mont viana} (at least when $X$ is compact).

Yet another way 
to define the Lyapunov exponent is to use the  isomorphism between $G$ and  a lattice 
$\Gamma\leq \mathrm{PSL}(2,\rr)$ provided by uniformization. Let $\hh$ be the hyperbolic plane.
For a sequence $r_n$ increasing  sufficiently fast   to infinity (e.g. $r_n = n^\alpha$ will do), 
we pick at random an element $\gamma_n\in \Gamma \cap B_\hh(0, r_n)$, relative to the counting measure. 
Then almost surely, $\unsur{2r_n} \log \norm{\rho(\gamma_n)}$ converges to $\chi_{\rm Brown}(\rho)$. 

\medskip

As in \cite{kleinbif}, the first main result asserts that $\tbif$ is indeed a bifurcation current. 

\begin{theo}\label{theo:support}
Let $X$ be a hyperbolic Riemann surface of finite type  and 
  $(\rho_\la)_{\la\in\La}$ be a holomorphic family of non-elementary parabolic 
representations of $G= \pi_1(X, \star)$. Then the 
support of the natural bifurcation current $\tbif$ is equal to the bifurcation locus. 
\end{theo}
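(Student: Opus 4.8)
The plan is to establish the two inclusions $\supp(\tbif)\subseteq \bif$ and $\bif\subseteq \supp(\tbif)$ separately, reducing as much as possible to the general theory developed in \cite{kleinbif} by means of the discretization of Brownian motion. The starting observation is that $\chi_{\rm Brown}$ is (up to a multiplicative constant) the Lyapunov exponent $\chi_\mu$ of a random walk on $G$: indeed, discretizing the Brownian path at integer times and recording the element $[\omega_t]\in\pi_1(X,\star)$ obtained by closing up with a shortest return path defines a probability measure $\mu$ on $G$, and $\chi_{\rm Brown}(\rho)=\chi_\mu(\rho)$ after normalization. One needs $\mu$ to have finite first moment with respect to the word metric (so that the theory of random products applies and the general results of \cite{kleinbif} are available), and the support of $\mu$ should generate $G$ as a semigroup; both should follow from standard estimates on the Brownian motion in the hyperbolic plane together with the linear escape rate, although in the punctured case the behaviour near the cusps requires the careful control alluded to in the abstract.

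For the inclusion $\supp(\tbif)\subseteq \bif$, I would argue that on the stability locus the representations are quasiconformally conjugate, so the Lyapunov exponent $\chi_{\rm Brown}(\rho_\la)$ varies \emph{pluriharmonically} there. The cleanest route is to use the characterization of stability via holomorphic motions: on a connected component $\Omega$ of the stability locus, there is a holomorphic motion of the limit set conjugating the actions, and one derives a cohomological/harmonicity statement for $\chi$ exactly as in the rational maps case and in \cite{kleinbif}. Hence $dd^c_\la\chi_{\rm Brown}(\rho_\la)=0$ on $\Omega$, i.e. $\Omega\cap\supp(\tbif)=\varnothing$.

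For the reverse inclusion $\bif\subseteq \supp(\tbif)$, the key is to produce, near any bifurcation parameter $\lambda_0$, some element $g\in G$ whose behaviour genuinely changes — concretely, a parameter where $\rho_\la(g)$ becomes parabolic or equal to the identity while being loxodromic nearby. This uses that such parameters are dense in $\bif$ (Mané--Sad--Sullivan type argument: a non-normal family of holonomy maps forces the creation of such accidental relations). Then one invokes the equidistribution statement: the divisors $\frac{1}{2n}Z(g_n\cdots g_1,t)$ converge to $\tbif$ for a.e. sample path, so if $\tbif$ vanished near $\lambda_0$ these divisors would eventually avoid a neighbourhood of $\lambda_0$, contradicting the accumulation of such hypersurfaces there. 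Equivalently one can use directly that $\chi_{\rm Brown}$, being a limit of the subharmonic functions $\frac{1}{2n}\log|\tr^2\rho_\la(g_n\cdots g_1)-t|$-type averages, cannot be pluriharmonic across a parameter where infinitely many of the $Z(\cdot,t)$ accumulate.

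The main obstacle, and where the bulk of the work lies, is the discretization step itself in the non-compact setting: one must show that the induced random walk measure $\mu$ on $G$ has finite first moment, that $\chi_\mu=\chi_{\rm Brown}$ up to normalization (the loop-closing procedure must not distort the exponent), and that the equidistribution results of \cite{kleinbif} transfer to this $\mu$. Controlling the excursions of Brownian motion into the cusps — where the word length can blow up relative to the elapsed time — is the delicate point, and is presumably handled by the ``careful control of a discretization procedure'' mentioned in the abstract; once this is in place, Theorem~\ref{theo:support} follows by combining it with the general machinery.
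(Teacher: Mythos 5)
There is a genuine gap, and it sits exactly where you locate ``the bulk of the work''. First, discretizing the Brownian path at integer times and recording $[\omega_n]$ does \emph{not} produce a random walk on $G$: the increments of the sequence $([\omega_n])_n$ are not i.i.d.\ (the path at time $n$ is not at the base point, and the closing geodesics do not compose), so there is no measure $\mu$ for which $\chi_{\rm Brown}=\chi_\mu$ comes for free. The paper instead uses the Furstenberg--Lyons--Sullivan discretization (sub-balayage onto the spheres $\gamma S_R$ through the ``good sets'' $G_x$, and the stopping times $T_n$), which is designed precisely so that the stopped positions follow a genuine $\mu$-random walk; the identification $\chi_\mu=\tau\,\chi_{\rm Brown}$ then still requires an ergodic-theorem argument showing $T_n/n\to\tau$. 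Second, your moment hypothesis is both too weak and unattainable: the support theorem of \cite{kleinbif} needs an \emph{exponential} moment condition (condition (M), $\int\norm{\rho_\la(g)}^{\sigma}d\mu<\infty$ locally uniformly), not a finite first moment; and when $X$ has cusps the discretization measure in fact has \emph{infinite} first moment with respect to word length (Guivarc'h--Le Jan, cf.\ Remark \ref{rmk:moment}), so no control ``with respect to the word metric'' can be salvaged. The real content of the paper's proof is establishing (M) in the form $\int\norm{\rho(g)}^{\alpha}d\mu<\infty$, via exponential moments for the hitting/stopping times (Carmona--Klein, Propositions \ref{prop:franchi}--\ref{prop:exponential moment stopping time}) and the Lipschitz family of spherical metrics on the suspension $M_\rho$ (Proposition \ref{prop:lipschitz}, Corollary \ref{cor:compar}), which converts displacement estimates for $\rho_{\rm can}$ into norm estimates for $\rho$.

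Your two-inclusion argument is also not what is needed: once $\tbif=c\,T_{{\rm bif},\mu}$ with $\mu$ satisfying (M) and $\supp\mu=G$, both inclusions are exactly the statement of Theorem \ref{thm:support} from \cite{kleinbif}, so re-proving them is redundant --- and your sketch of $\bif\subseteq\supp(\tbif)$ is flawed as written: convergence of the normalized divisors $\frac{1}{2n}[Z(g_n\cdots g_1,t)]$ to the zero current on a neighbourhood does not force those hypersurfaces to avoid the neighbourhood, only their normalized masses to tend to $0$, so the accumulation of accidental parabolics there yields no contradiction by itself. In short, the missing ingredients are the correct discretization (FLS, not integer-time sampling) and the exponential moment estimate (M) in its matrix-norm form; with those, the theorem follows by quoting \cite{kleinbif}, not by redoing its support theorem.
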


The strategy of the proof is to rely on the corresponding support theorem in \cite{kleinbif}. For this, 
we use a discretization procedure for the Brownian motion, which was devised by Furstenberg \cite{furstenberg discretization} 
and then refined  by Lyons and Sullivan \cite{lyons sullivan}, among others. In some sense, it allows to project the Wiener measure 
(based, say, at $\star$), to a probability measure $\mu$ on $G$. We then 
show that the natural bifurcation current is equal to 
$T_{{\rm bif},\mu}$, up to a multiplicative constant, from which the result follows. 
The point is that the support theorem in \cite{kleinbif} requires an 
  exponential moment condition on $\mu$.  When  $X$ is compact, this fact is apparently part of the folklore.
We show that when  $X$ is merely of finite volume  and $\rho$ is a parabolic representation, $\mu$ still satisfies the following slightly
modified moment property
$$\int_G \exp\lrpar{ \beta d_\hh\big(0, \rho(g)(0)\big)} d\mu(g)<\infty \text{ for some } \beta>0,$$
 that is enough to apply the results of \cite{kleinbif} (see Proposition \ref{prop:moment}). 
 The proof is rather   technical, and occupies most of \S \ref{sec:brownian}. We believe that it could find applications beyond this work.

\medskip

The second main theme of the paper is the equidistribution of random hypersurfaces of the form $Z(g_n, t)$ 
towards the bifurcation current.  
Given the Riemann surface structure $X$, there are at least two natural choices for the group elements $g_n$:
\begin{itemize}
\itm A first possibility is to pick  randomly an element  $\gamma_n$ in $\Gamma\cap B_\hh(0, r_n)$ (according to the counting measure), and let $r_n\cv\infty$.
\itm A second choice is to choose at random a closed geodesic $\gamma_n$
of length at most $r_n$ on $X$. 
This geodesic defines a conjugacy class in $G$, so, abusing notation slightly, we can define  
  $\tr^2(\rho([\gamma]))$  and  it makes sense to consider  
 $Z(\gamma_n, t) =  \set{\la\in \La, \ \tr^2([\gamma_n]) = t}$. 
\end{itemize}
 
 In both cases, we show that the expected equidistribution holds. 
 For concreteness, let us state precisely the equidistribution theorem for random closed geodesics 
 (see Theorem \ref{thm:equidist} below for the other case). 
 The notion of a random closed geodesic will be made precise in  \S\ref{subs:models}. 
 Several natural models are possible,  the theorem applies to all of them.
 
 \begin{theo}\label{theo:equidist}
 Let $X$ be a hyperbolic Riemann surface of finite type  and 
  $(\rho_\la)_{\la\in\La}$ be a holomorphic family of non-elementary parabolic 
representations of $G= \pi_1(X, \star)$ into $\PSL$. Fix $t\in \cc$. Let $(r_n)$ be a sequence 
such that for every $c>0$, the series  $\sum_{n\geq 0} e^{-cr_n}$ converges. 
Let $(\gamma_n)$ be a sequence of 
 independent  random  closed geodesics on $X$ of length at most $r_n$. Then almost surely, 
$$\unsur{2 \length(\gamma_n)} [Z(\gamma_n, t)] \underset{n\cv\infty}{\longrightarrow }\tbif.$$
  \end{theo}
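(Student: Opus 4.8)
The plan is to reduce the statement to an equidistribution theorem for the discretized random walk together with a statement comparing random geodesics (in a fixed ball) to the output of the discretization. The backbone is the following: by Theorem~\ref{theo:support}'s proof we already know that $\tbif = dd^c_\la \chi_{\rm Brown}(\rho_\la)$ equals $\frac1c\,dd^c_\la \chi_\mu(\rho_\la) = \frac1c T_{{\rm bif},\mu}$ for the discretization measure $\mu$ on $G$, which satisfies the modified exponential moment condition of Proposition~\ref{prop:moment}. By the equidistribution theorem of \cite{kleinbif} applied to $\mu$, for $\mu^{\otimes\nn}$-almost every sample path $(g_n\cdots g_1)$ we have $\frac{1}{2n}[Z(g_n\cdots g_1, t)] \to T_{{\rm bif},\mu}$; and since $\frac1n \log\norm{\rho(g_n\cdots g_1)} \to \chi_\mu(\rho) = c\,\chi_{\rm Brown}(\rho)$ while $\frac1n d_\hh(0,\gamma_n\cdots\gamma_1(0)) \to \ell_\mu$ (the drift of the discretized walk on $\hh$, which by the Lyons--Sullivan construction coincides with the escape rate of Brownian motion, normalized to $1$), the word $g_n\cdots g_1$ has translation length $\approx 2c n$... more precisely $d_\hh(0, \rho_{\lo}\text{-image})$... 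What we actually need is that $\length$ of the \emph{geodesic representative} of the conjugacy class of $g_n\cdots g_1$ in $X$ grows like $\ell_\mu n$, so that $\frac{1}{2n}$ and $\frac{1}{2\length(\gamma_n)}$ differ by the asymptotically constant factor $\ell_\mu$, which gets absorbed into the identification $\tbif = \frac1c T_{{\rm bif},\mu}$ after checking $c = \ell_\mu$ (this is exactly the content of the normalization "$\chi_{\rm geodesic} = \chi_{\rm Brown}$" together with the fact that Brownian motion escapes at unit speed).

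Concretely I would proceed as follows. First, fix the model of a random closed geodesic of length $\le r_n$ (from \S\ref{subs:models}) and recall that, by the prime geodesic theorem / Margulis' counting estimates on a finite-volume hyperbolic surface, the number of primitive closed geodesics of length $\le r$ grows like $e^r/r$, and that under the natural measure a random such geodesic equidistributes, as $r\to\infty$, towards the Liouville (Bowen--Margulis) measure on $T^1X$. Second, I would set up a coding: a closed geodesic of length $\approx r$ corresponds (via the base point $\star$ and a bounded-error connecting arc, exactly as in the construction of $[\omega_t]$ preceding Proposition~\ref{propo:A}) to a conjugacy class, hence to an element $\gamma\in\Gamma$ with $d_\hh(0,\gamma\cdot 0) = r + O(1)$ lying in $B_\hh(0, r+O(1))$; conversely elements of $\Gamma\cap B_\hh(0,r)$ that are "generic" (not too close to the boundary of the ball, and whose axis passes through a fixed compact set) have translation length $r + o(r)$. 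Thus picking a random closed geodesic of length $\le r_n$ is, up to controllable errors, the same as picking a random element of $\Gamma\cap B_\hh(0,r_n)$ with respect to a measure that is comparable to counting measure — which is precisely the first model listed in the Introduction, for which I would prove (or cite from the body of the paper, Theorem~\ref{thm:equidist}) the equidistribution $\frac{1}{2r_n}[Z(\gamma_n,t)] \to \tbif$.

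The link between "random element in a large ball" and "random walk of length $n$" is the discretization of Brownian motion of \S\ref{sec:brownian}: running Brownian motion for time $T$ and discretizing produces a word in $G$ of length $\approx T/\ell$ (with $\ell$ the mean spacing of the Lyons--Sullivan stopping times) whose $\Gamma$-image lands at distance $\approx T$ from $0$ in $\hh$ (Brownian motion escapes at unit speed, Proposition~\ref{propo:A} and the remarks after it); so after time $T = r_n$ we are looking at the divisor $Z(g_{k_n}\cdots g_1, t)$ with $k_n\approx r_n/\ell$ and $\frac{1}{2r_n} = \frac{1}{2\ell k_n}(1+o(1))$. Applying the \cite{kleinbif} equidistribution theorem to $\mu$ along the (rapidly increasing, by the hypothesis $\sum e^{-cr_n}<\infty$ which allows a Borel--Cantelli argument to pass from convergence along the full walk to convergence along the subsequence $k_n$) subsequence, and using $c\,\chi_{\rm Brown} = \chi_\mu$ with $c = \ell$ forced by the unit-speed normalization, gives $\frac{1}{2r_n}[Z(g_{k_n}\cdots g_1,t)] \to \frac1\ell \cdot \frac1{?}\dots \to \tbif$. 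Finally I would transfer this back to the geodesic model: the key point is that the \emph{law} of the conjugacy class produced by discretized Brownian motion run for time $r_n$ is, after pushing to $T^1X$ via the equidistribution of closed geodesics towards Liouville measure, asymptotically the same as the law of a uniformly random closed geodesic of length $\le r_n$ — both equidistribute towards the same measure on the space of conjugacy classes with the same length asymptotics — so the two equidistribution statements are equivalent. The hypothesis $\sum_n e^{-cr_n}<\infty$ for all $c>0$ is exactly what is needed to run Borel--Cantelli and upgrade "almost sure convergence along a sequence $r_n\to\infty$" to "almost sure simultaneous convergence", handling the independence of the $\gamma_n$.

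The main obstacle, I expect, is the non-compactness of $X$: a random closed geodesic of bounded length, or a random element of $\Gamma\cap B_\hh(0,r)$, may spend a definite proportion of its "time" deep in the cusps, where the connecting-arc-to-$\star$ construction is no longer of bounded length and where the comparison between $d_\hh(0,\gamma\cdot 0)$ and the translation length of $\gamma$ degrades. Controlling this requires the modified exponential moment estimate of Proposition~\ref{prop:moment} (which is what tames the cusp excursions on the random-walk side) together with a large-deviations bound showing that geodesics/lattice points with long cusp excursions are exponentially rare — this is presumably where "most of \S\ref{sec:brownian}" and the careful control of the discretization are spent, and it is the step I would budget the most effort for. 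The trace value $t$ being arbitrary complex rather than $t=4$ is harmless: the proof in \cite{kleinbif} already handles general $t$, since $dd^c_\la \log\abs{\tr^2(\rho_\la(g)) - t}$ still converges, after normalization, to the same current regardless of $t$.
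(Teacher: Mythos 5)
Your overall strategy---discretize the Brownian motion, identify $\chi_\mu$ with a constant multiple of $\chi_{\rm Brown}$, and worry about cusp excursions---is consistent in spirit with the paper, but the pivotal step of your argument has a genuine gap. You pass from the almost-sure equidistribution of \cite{kleinbif} along a $\mu$-random walk to independent random elements of $\Gamma\cap B_\hh(0,r_n)$ (and then to random closed geodesics) by asserting that the law of the element produced by the discretized Brownian motion run for time $r_n$ is ``asymptotically the same'' as the uniform law on the ball, resp.\ on closed geodesics of length at most $r_n$, because both equidistribute towards the same limit. This does not work: $\mu^{k_n}$ and normalized counting measure are not comparable (there is no two-sided bounded density), and weak equidistribution towards a common limiting measure only controls averages of \emph{fixed} continuous observables; it gives no bound on the measure of the $n$-dependent bad sets $\{\gamma:\ \abs{\frac{1}{2d(\gamma)}\log\abs{\tr^2\rho_\la(\gamma)}-\chi_{\rm Brown}(\rho_\la)}>\e\}$, which is what a Borel--Cantelli argument for \emph{independent} samples requires. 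Moreover the walk elements $g_n\cdots g_1$ are nested, hence strongly dependent, while the $\gamma_n$ in the statement are independent; an almost-sure statement along the walk cannot be transported to independent samples without a quantitative exponential bound on the bad set. This is precisely why the paper proves the counting Theorem \ref{thm:counting}, $\#\{\gamma\in B_\Gamma(r):\ \abs{\frac{1}{2r}\log\abs{\tr^2\rho(\gamma)}-\chi_{\rm Brown}(\rho)}>\e\}\le e^{(1-c(\e))r}$, by comparing $\mu^n$ with the counting measure through large deviations for the Lyons--Sullivan stopping times, the heat-kernel estimate of Lemma \ref{lem:heat}, a Vitali covering argument and Margulis' lattice-point count; the geodesic models are then handled via the ``well spread'' Lemmas \ref{lem:well spread} and \ref{lem:well spread2} (much weaker than your claimed comparability with counting measure, and sufficient) followed by Borel--Cantelli.

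A second, related omission: your coding step treats the passage from a Brownian or geodesic endpoint to a nearby lattice element (or from a based loop to its geodesic representative) as harmless because the correcting arc has bounded length. That is true for norms but emphatically false for traces: $\tr^2(\rho(\gamma a))$ can differ wildly from $\tr^2(\rho(\gamma))$ even for bounded $a$, which is the ``delicate point'' the paper isolates and resolves with Proposition \ref{prop:trace perturbee}, a large-deviation estimate for $\abs{\tr\rho(\gamma a)}$ uniform over all perturbations $a$ with $\norm{a}\le e^{c\e n}$. Without such an estimate your reduction collapses at exactly the place where the error terms enter. Finally, even granting the fixed-$\la$ scalar convergence, you do not explain how to pass to convergence of the currents $\frac{1}{2\length(\gamma_n)}[Z(\gamma_n,t)]$; the paper does this by a Fubini argument in $(\la,\gamma)$ combined with H\"ormander's compactness theorems for psh functions, using the locally uniform upper bound furnished by Corollary \ref{cor:compar}.
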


Notice  that this result does  {\em not} follow from 
 the equidistribution results in \cite{kleinbif} since the    random elements
 $\gamma_n$ are not obtained from a random walk on $G$. 
To prove the theorem,   we consider the sequence of psh potentials 
$$u(\gamma_n, \la) = \unsur{2\length(\gamma_n)} \log\abs{\tr^2(\rho_\la(\gamma_n))-t}.$$ We need to show that this sequence of psh functions a.s. converges  to $\chi_{\rm Brown}(\rho_\la)$ in $L^1_{\rm loc}(\La)$. 

The main step is to study this convergence   for a  {\em fixed} representation. 

\begin{theo}\label{theo:counting}
 Let $X$ be a hyperbolic Riemann surface of finite type 
and $\rho:G\cv\PSL$ be a non-elementary parabolic representation. Let $(r_n)$ and $(\gamma_n)$ be as in Theorem 
\ref{theo:equidist}. Then almost surely
$$\unsur{2\length(\gamma_n)} \log\abs{\tr^2(\rho (\gamma_n))}\underset{n\cv\infty}{\longrightarrow}
\chi_{\rm Brown}(\rho).$$
\end{theo}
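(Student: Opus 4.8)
The plan is to compare the averaged quantity $\frac{1}{2\length(\gamma_n)}\log\abs{\tr^2(\rho(\gamma_n))}$ with a more tractable geometric quantity, namely $\frac{1}{2\length(\gamma_n)}$ times $2\log\norm{\rho(\gamma_n^*)}$ where $\gamma_n^*$ is the element of $G$ obtained by closing up a geodesic segment representing the free homotopy class of $\gamma_n$. The key link is the elementary matrix estimate: for $A \in \SL$ one has $\abs{\tr^2(A)} \le 4\norm{A}^2$ and, conversely, unless $A$ is close to being parabolic/elliptic, $\abs{\tr^2(A)}$ is comparable to $\norm{A}^2$; more precisely $\log^+\abs{\tr^2(A)} = 2\log\norm{A} + O(1)$ with an error that is uniformly bounded once $\norm{A}$ is large, and in general $\log\abs{\tr^2(A)-t} - 2\log^+\norm{A}$ is bounded above. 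The subtlety is the lower bound, i.e. controlling how often $\tr^2(\rho(\gamma_n))$ comes close to $t$ (or to $[0,4]$); since we divide by $\length(\gamma_n) \to \infty$, it will suffice to show that the ``bad'' deviations are $o(\length(\gamma_n))$ along the random sequence, and a Borel--Cantelli argument using the summability of $\sum_n e^{-cr_n}$ should handle this.

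First I would set up the probabilistic framework of \S\ref{subs:models}, recalling that a random closed geodesic of length at most $r_n$ corresponds, via the correspondence between closed geodesics and conjugacy classes in $\Gamma$, to picking a conjugacy class with translation length $\le r_n$ according to the chosen model; the equidistribution of closed geodesics (the prime geodesic theorem, or equidistribution of the geodesic flow) tells us that such a random $\gamma_n$ is, with high probability, a ``generic'' geodesic in the sense that its initial segment behaves like a typical piece of a Brownian/geodesic path. Second, I would invoke the geodesic version of Proposition \ref{propo:A} (the identification $\chi_{\rm geodesic} = \chi_{\rm Brown}$, attributed to Bonatti--Gomez-Mont--Viana): along a generic geodesic ray, $\frac{1}{t}\log\norm{\rho(\text{closed-up segment at time } t)} \to \chi_{\rm Brown}(\rho)$. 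The point is to transfer this from geodesic \emph{rays} from $\star$ to \emph{closed} geodesics: a long closed geodesic, cut at an almost-generic point, differs from a genuine geodesic ray by a returning path of length $O(\log \length(\gamma_n))$ (bounded returning paths for cocompact $\Gamma$; logarithmically growing for the cusped case, which is where the finite-volume technicalities enter), hence $\log\norm{\rho(\gamma_n)} = \log\norm{\rho(\gamma_n^*)} + o(\length(\gamma_n))$, where $\gamma_n^*$ is the corresponding ray-segment. Combined with the conjugacy-invariance of the trace, this gives $\frac{1}{2\length(\gamma_n)}\log\norm{\rho(\gamma_n)}^2 \to \chi_{\rm Brown}(\rho)$ along a set of $\gamma_n$ of full probability in the limit.

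The remaining issue — and the main obstacle — is passing from $\log\norm{\rho(\gamma_n)}$ to $\log\abs{\tr^2(\rho(\gamma_n))}$, i.e. ruling out that the random geodesic is mapped by $\rho$ to a nearly-elliptic or nearly-parabolic element so often that $\log\abs{\tr^2(\rho(\gamma_n)) - t}$ is abnormally negative. Here I would argue as follows: the upper bound $\log\abs{\tr^2(\rho(\gamma_n))-t} \le 2\log\norm{\rho(\gamma_n)} + O(1)$ is free, giving the $\le$ half of the theorem immediately. For the $\ge$ half, fix a small $\e>0$; the event that $\abs{\tr^2(\rho(\gamma_n)) - t} \le e^{-\e \length(\gamma_n)}$ forces $\gamma_n$ into an exceptional set of conjugacy classes, whose ``size'' (relative to the counting of all classes of length $\le r_n$, which grows like $e^{r_n}/r_n$ by the prime geodesic theorem) I would bound by $C e^{(1-c(\e)) r_n}$ for some $c(\e)>0$ — using that the condition $\tr^2(\rho(\gamma)) \approx t$ is an analytic constraint on the closed-up word, combined with the large-deviation estimates for the geodesic flow (generic geodesics spend a definite fraction of their length in a region where $\rho$ is expanding, so the trace cannot be too small). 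Summing over $n$ and applying Borel--Cantelli using the hypothesis $\sum_n e^{-cr_n}<\infty$ for every $c>0$, the bad event happens only finitely often a.s.; letting $\e \to 0$ along a countable sequence then yields $\liminf_n \frac{1}{2\length(\gamma_n)}\log\abs{\tr^2(\rho(\gamma_n))} \ge \chi_{\rm Brown}(\rho)$, which completes the proof. The delicate point throughout is the finite-volume case, where one must carefully track excursions into the cusps — both for the logarithmic length of returning paths and for ensuring the geodesic does not degenerate — and I expect this is where the moment estimate of Proposition \ref{prop:moment} and the discretization machinery of \S\ref{sec:brownian} will be reused.
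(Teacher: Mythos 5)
There is a genuine gap, and it sits exactly at the point you flag as ``the main obstacle'' and then dispose of in one sentence. The easy half of your argument (the upper bound $\log\abs{\tr^2(\rho(\gamma_n))}\leq 2\log\norm{\rho(\gamma_n)}+O(1)$, and the identification of the norm growth with $\chi_{\rm Brown}$) is fine and agrees with the paper. But the lower bound is the whole content of the theorem, and your proposed justification --- that the exceptional set of conjugacy classes has cardinality $\leq Ce^{(1-c(\e))r_n}$ ``using that $\tr^2(\rho(\gamma))\approx t$ is an analytic constraint, combined with large-deviation estimates for the geodesic flow (generic geodesics spend a definite fraction of their length in a region where $\rho$ is expanding, so the trace cannot be too small)'' --- is not an argument. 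Expansion controls $\norm{\rho(\gamma)}$, not the trace: one has $\frac12\log\abs{\tr^2(g)-4}=\log\norm g+\log\delta(g)+O(1)$, where $\delta(g)$ is the distance between the fixed points of $g$, so a product of strongly expanding matrices can perfectly well have tiny $\abs{\tr^2}$ (near-elliptic or near-parabolic cancellation); what must be excluded is the near-coincidence of the attracting and repelling fixed points, and that is a statement about the position of the axis, not about the norm. For the same reason your reduction to geodesic rays does not help: the paper stresses that closing a long path by a bounded (or $O(\log)$) piece controls the norm of the holonomy but gives \emph{no} control on the trace --- $\tr^2(\rho(\gamma a))$ can be completely different from $\tr^2(\rho(\gamma))$ even for bounded $a$ and the identity representation --- so the a.e.\ convergence $\chi_{\rm geodesic}=\chi_{\rm Brown}$ along rays, plus closing up, cannot yield the trace asymptotics. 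Moreover an a.e.\ statement carries no rate, whereas your Borel--Cantelli step with $\sum e^{-cr_n}<\infty$ requires exponential deviation bounds.

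What actually fills this gap in the paper is the counting estimate (Theorem \ref{thm:counting}): for each $\e>0$, the number of $\gamma\in B_\Gamma(r)$ with $\abs{\frac1{2r}\log\abs{\tr^2(\rho(\gamma))}-\chi_{\rm Brown}(\rho)}>\e$ is at most $e^{(1-c(\e))r}$. Its proof is the main technical work and uses machinery absent from your outline: the Furstenberg--Lyons--Sullivan discretization measure $\mu$ with its exponential moment (Proposition \ref{prop:moment}); a perturbation-stable large deviation estimate for traces under $\mu^n$ (Proposition \ref{prop:trace perturbee}), uniform over all perturbations $a$ with $\norm a\leq e^{c\e n}$, which rests on large deviations for norms of random matrix products and on the regularity of the stationary measure on $\pu$ (this is where near-parabolicity is ruled out probabilistically); and a transfer from $\mu^n$ to the counting measure on $B_\Gamma(n\tau)$ via large deviations for the stopping time, heat kernel estimates (Lemma \ref{lem:heat}), a Vitali covering argument and the Margulis count $\#B_\Gamma(r)\sim e^r/\mathrm{Area}(X)$. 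Finally, even granting the counting bound, your model of a ``random closed geodesic'' must be shown not to concentrate on small subsets of $\Gamma$ (the well-spread Lemmas \ref{lem:well spread} and \ref{lem:well spread2}, or the prime geodesic asymptotics for the length-based model); you implicitly treat the law of $\gamma_n$ as uniform counting, which is a separate point to verify for the Thurston and Brownian models.
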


Notice that when $\rho$ is the uniformizing representation (i.e. $\rho = \mathrm{id}$ when $G$ is identified to $\Gamma$), 
$ \log\abs{\tr^2([\gamma_n]))}$ is nothing but the  length of $ \gamma_n$ (up to an  error of order $O(e^{-\length(\gamma_n)})$). 
This theorem thus  describes how this quantity evolves as $\rho$ is deformed in the space  of representations. 

By the  Borel-Cantelli lemma,  to prove   Theorem \ref{theo:counting} we need an estimate on  the number of geodesics of length 
at most $r$ such that $\abs{\unsur{2\length(\gamma)} \log\abs{\tr^2(\rho (\gamma))} - \chi_{\rm Brown}(\rho)}
 \geq \e$. This counting is based on large 
deviation estimates for the traces of random products of matrices, together with  
a precise understanding of the Furstenberg-Lyons-Sullivan discretization procedure. 
Observe that the exponential moment property of the discretization measure is also crucial here.


Finally,  Theorem \ref{theo:equidist} is a rather straightforward consequence of
Theorem \ref{theo:counting}, by using some standard compactness properties of psh functions. 

\medskip

The outline of the paper is as follows. In \S\ref{sec:prel}  after  a few  facts on 
  hyperbolic geometry and $\PSL$ representations, we recall some necessary results 
  from \cite{kleinbif}. We also  discuss the basic properties of   
 Brownian motion on hyperbolic surfaces.  
In \S \ref{sec:brownian}, we define the natural Lyapunov exponent and prove Theorem \ref{theo:support}. 
For further reference as well as for completeness, we include
 a detailed treatment of the discretization procedure.  The equidistribution theorems are established in \S\ref{sec:equidist}. We conclude the paper with a  few open questions. 
 
 \medskip
 
 The results of this paper were announced in a preliminary form in \cite[\S 5.1]{kleinbif} and \cite[\S 4.5]{survey}.

\section{Preliminaries}\label{sec:prel}

Throughout the paper 
we use the following notation: if $u$ and $v$ are two  real valued functions, we write
 $u \asymp v$ (resp $u\lesssim v$)
  if there exists a constant $C>0$ such that $\unsur{C}u\leq v\leq Cu$ (resp. $u\leq Cv$). 
We also use the letter $C$ for a generic positive ``constant", which may change from line to line, independently of a varying parameter which should be clear from the context.

\subsection{Plane hyperbolic geometry} 
To fix notation, we summarize a few basic facts from hyperbolic geometry. See Beardon \cite{beardon} for a systematic account. 
We let $\hh^2$ (often simply $\hh$) be the hyperbolic plane (resp. $\hh^3$ be the hyperbolic 3-space) of curvature $-1$. The hyperbolic distance is denoted by $d_\hh$. The  hyperbolic area in $\hh^2$ is simply denoted by by $\mathrm{Area}(\cdot)$, and we recall that the area of a ball of radius $r$ is $4\pi \sinh^2\frac{r}{2}$.
As usual, we may  switch from the Poincar\'e disk (resp. ball) to the upper half plane (resp. space) model depending on the particular situation.  The  group of orientation preserving isometries of $\hh^2$ (resp. $\hh^3$)
will be viewed as $\mathrm{PSL(2, \rr)}$ (resp. $\PSL$). The $\mathrm{SO}(2)$ (resp. $SU(2)$) invariant point will be denoted by $0$.
 We let $\norm{\cdot}$   be the   
  operator norm associated to the standard Hermitian metric on $\cd$; recall that $\norm{A}$ is the spectral radius of $A^*A$. 
  The norm of an element   $\gamma\in \PSL$ is 
by definition $\norm{A}$, where $A$ is any matrix representative of $\gamma$ in $\SL$. 
 We also sometimes use $\norm{\cdot}_2$ defined by  $\norm{\gamma}_2:=  \big(\abs{a}^2+ \abs{b}^2+\abs{c}^2 + \abs{d}^2\big)^{1/2} $, where $A = \lrpar{\begin{smallmatrix}
   a & b \\ c& d                                                                                                                                                                                                                                                                                                                                                                                                                                                                                                                                                                                                                                                      \end{smallmatrix}}$. Likewise, if $\gamma\in \PSL$, $\tr^2(\gamma)$ and $\abs{\tr(\gamma)}$ are well-defined quantities. 
Norm and displacement are related by the identity
\begin{equation}\label{eq:norm distance}
\norm{\gamma}_2^2 = 2 \cosh (d_\hh(0, \gamma 0)), 
  \end{equation} 
in particular  $2 \log \norm{\gamma} \sim    d_\hh(0, \gamma 0)$ as $\norm{\gamma}\cv\infty$.
If $\gamma$ is a hyperbolic (resp. loxodromic) element then we denote its translation length by  $\ell(\gamma)$, which satisfies 
\begin{equation}\label{eq:translation length}
\cosh \lrpar{\frac{\ell(\gamma)}{2}} = \unsur{2} \abs{\tr(\gamma)}.
\end{equation}

\subsection{Spaces  of representations of ${\pi_1(S)}$ into $\PSL$}
A good source for the material in this section is \cite{kapovich}. 
Let $S$ be a (topological) surface of finite topological type, that is, homeomorphic to a compact 
surface of genus $g$ with    a finite number $n$ of  punctures.  Throughout the paper we
 always assume that $\chi(S) = 2-2g-n<0$. If $S$ is endowed with a complex structure, the corresponding Riemann surface will be denoted by $X$. Fix a basepoint $\star\in S$ 
  and put 
 $G =\pi_1(S, \star)$. The space of representations of $\mathrm{Hom}(G, \PSL)$ 
 is an affine complex algebraic variety (recall that $\PSL$ is affine because $\PSL\simeq \mathrm{SO(3, \cc)}$). A general holomorphic family of representations is simply a holomorphic mapping $\La\cv \mathrm{Hom}(G, \PSL)$. 
 The   group $\PSL$ acts algebraically 
  by conjugation on $\mathrm{Hom}(G, \PSL)$,  and the algebraic quotient 
  $\mathrm{Hom}(G, \PSL)/\! /\PSL$ is the  {\em character variety} $\cX(G)$. 
  Coordinates on $\cX(G)$ are given by functions of the form $\rho\mapsto\tr^2(\rho(\gamma))$. 
We also let $\cX^{NE}(G)\subset \cX(G)$ be the subset corresponding to non-elementary representations.

When $S$ is not compact we actually   rather work with the {\em relative character variety} $\cX_{\rm par}(G)$, which is defined as follows. Let  $(h_i)_{i =1\ldots n}$ in $G$ be  classes of closed loops turning once around the punctures. We define the set of {\em parabolic} 
representations    by 
 $$\mathrm{Hom}_{\rm par}(G, \PSL) = \set{\rho\in \mathrm{Hom}(G, \PSL), \ \forall i, \ 
 \rho(h_i) \text{ is parabolic}}.$$   The algebraic quotient 
 $\cX_{\rm par}(G) =  \mathrm{Hom}_{\rm par}(G, \PSL)/\! /\PSL$ is the  relative character variety. 
 It can be shown that $\cX_{\rm par}(G)$ is  of complex dimension  $6g-6+2n$, and is smooth at non-elementary representations.  

\subsection{Bifurcation currents}
Here we gather some material from \cite{kleinbif}. See also \cite{survey} for a general account on   bifurcation currents in one-dimensional dynamics, and \cite{beardon, kapovich} for basics on Kleinian (and more generally M\"obius) groups.

Let $G$ be as above, $\La$ be a connected complex manifold and $\rho = (\rho_\la)_{\la\in \La}$ be a holomorphic family of representations of $G$ into $\PSL$. We usually work under the following assumptions:
\begin{enumerate}
\item[(R1)] the family is non-trivial, in the sense that there exists $\la_1$, $\la_2$ such that 
the representations  $\rho_{\la_i}$, $i=1,2$ are not   conjugate  in $\PSL$;
\item[(R2)] there exists  $\la_0\in \La$ such that  $\rho_{\la_0}$ is faithful;
\item[(R3)] for every $\la\in \La$, $\rho_\la$ is non-elementary. 
\end{enumerate}
It may sometimes be useful to   replace (R3) by the following weaker statement
\begin{enumerate}
\item[(R3')]  there exists  $\la_0\in \La$ such that  $\rho_{\la_0}$ is non-elementary.
\end{enumerate}
In that case the subset $E\subset\La$ of elementary representations is a proper real-analytic subset. 
According to Sullivan \cite{sullivan acta} there exists  a partition $\La = \bif\cup\stab$ of $\La$ into an open   locus  $\stab$ of \textit{stable representations} (in the sense of dynamical systems)  and a closed \textit{bifurcation locus} $\bif$.  By definition, a representation $\rho$ is stable  if nearby representations in $\La$ are quasi-conformally conjugate to $\rho$. 

\medskip

Fix a probability measure $\mu$ on $G$, whose support  generates $G$ as a semi-group. We will assume  the following locally uniform exponential moment condition:
\begin{equation}\tag{M}
 \forall\lo\in \La, \
 \exists N(\lo)  \text{ neighbd of }\lo,\text{ and }\sigma >0 \text{ s.t. } \forall \la\in N(\lo), 
\int_G\norm{\rho_\la(g)}^\sigma  d\mu(g) <\infty,
\end{equation} where $\norm{\cdot}$ is any matrix norm on $\PSL$.
Under such a moment condition one can define a Lyapunov exponent  function on $\La$ by the formula
$$\chi_{\mu}(\la) = \lim_{n\cv\infty} \unsur{n} \int \log\norm{\rho_\la(g)} d\mu^n(g),$$ and the associated bifurcation current $ T_{{\rm bif}, \mu} = dd^c\chi_\mu$ --we deliberately  emphasize the dependence on $\mu$ here.

The following result was established in \cite[Thm. 3.13]{kleinbif}.

\begin{thm}\label{thm:support}
Let $(G, \rho, \mu)$ be a holomorphic family of representations of $G$, satisfying (R1, R2, R3'), endowed with a measure $\mu$, generating $G$ as a semi-group, and satisfying (M). Then $\supp(T_{{\rm bif}, \mu} )$ is equal to the bifurcation locus. 
\end{thm}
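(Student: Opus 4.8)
The starting point is that $\chi_\mu$ is plurisubharmonic on $\La$ --- for instance as the limit, along the dyadic times $n=2^k$, of the \emph{decreasing} sequence of psh functions $\la\mapsto\unsur{2^k}\int_G\log\norm{\rho_\la(g)}\,d\mu^{2^k}(g)$ --- and $\chi_\mu>0$ since the representations are non-elementary. Hence $T_{{\rm bif},\mu}=dd^c\chi_\mu$ is a positive closed $(1,1)$-current, and $\supp(T_{{\rm bif},\mu})$ is exactly the complement of the largest open subset of $\La$ on which $\chi_\mu$ is pluriharmonic. I would therefore split the statement into the two inclusions: (i) $\supp(T_{{\rm bif},\mu})\subset\bif$, i.e.\ $\chi_\mu$ is pluriharmonic on all of $\stab$; and (ii) $\bif\subset\supp(T_{{\rm bif},\mu})$, i.e.\ if $\chi_\mu$ is pluriharmonic on an open set $U$ then $U\subset\stab$. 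For both I would use that, by classical results on random products of matrices and the moment condition (M) (which gives local uniform integrability), for a.e.\ sample path the psh potentials $u_n(\la):=\unsur{2n}\log\abs{\tr^2(\rho_\la(g_n\cdots g_1))}$ converge to $\chi_\mu$ in $L^1_{\rm loc}(\La)$.

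For (i): fix a sample path in this full-measure set and write $w_n=g_n\cdots g_1$. On $\stab$ the actions on $\pu$ of nearby representations are quasiconformally conjugate, so the topological type (loxodromic / parabolic / elliptic) of $\rho_\la(w_n)$ is locally constant in $\la\in\stab$. Since the representations are non-elementary, for a.e.\ path and every parameter $\rho_\la(w_n)$ is loxodromic once $n$ is large; by the local constancy this holds uniformly on connected components of $\stab$. For such $n$, $\tr(\rho_\la(w_n))$ is a holomorphic function of $\la$ that does not vanish on $\stab$, so $u_n=\unsur{2n}\log\abs{\tr^2(\rho_\la(w_n))}=\unsur{n}\log\abs{\tr(\rho_\la(w_n))}$ is pluriharmonic there. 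As an $L^1_{\rm loc}$-limit of pluriharmonic functions is pluriharmonic, letting $n\cv\infty$ gives $dd^c\chi_\mu=0$ on $\stab$.

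For (ii): restricted to $U$, the $u_n$ are psh and converge in $L^1_{\rm loc}(U)$ to the pluriharmonic --- hence locally bounded and real-analytic --- function $\chi_\mu>0$; in particular $dd^c u_n=\unsur{2n}[Z(w_n,0)]\cv0$, and likewise $\unsur{2n}[Z(w_n,t)]\cv0$ on $U$ for every $t\in\cc$. The plan is to promote this to locally uniform control of the $u_n$ --- using Hartogs' lemma together with the fact that $\chi_\mu$ is bounded away from $0$ on compact subsets of $U$ --- and thereby to prove that the families $\set{\la\mapsto\fix(\rho_\la(g))}_{g\in G}$ of attracting and repelling fixed points are normal on $U$: concretely, that no $\tr^2(\rho_\la(g))$ reaches the value $4$ on $U$ (so that every $\rho_\la(g)$ stays loxodromic), and that the normalised matrices $\rho_\la(w_n)/\norm{\rho_\la(w_n)}$ converge locally uniformly on $U$ to a holomorphically varying rank-one limit. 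These limits furnish a holomorphic motion of the fixed points of the $\rho_\la(g)$; by density of those fixed points in the limit set and the $\la$-lemma it extends to an equivariant holomorphic motion of the limit sets over $U$, and Sullivan's structural stability theory then yields $U\subset\stab$. (An alternative for (ii) is to argue by contradiction from the density of accidental parabolics in $\bif$: if $U$ met $\bif$ there would be $g\in G$ with $\tr^2(\rho_\la(g))$ non-constant on $U$ and equal to $4$ somewhere, and one would show --- again through a normality argument --- that such a transverse accidental parabolic forces $T_{{\rm bif},\mu}$ to carry mass inside $U$.)

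The hard part will be (ii). The hypothesis that $\chi_\mu$ is pluriharmonic on $U$ is purely potential-theoretic and a priori controls the $u_n$ only in $L^1_{\rm loc}$, that is, on average over the parameter; converting this into the dynamical conclusion that the whole $G$-action is quasiconformally stable on $U$ requires upgrading it to locally uniform control of the $u_n$ --- equivalently, to the statement that the divisors $Z(w_n,t)$ do not cluster anywhere inside $U$. This is precisely where the exponential moment condition (M) and large-deviation estimates for the traces of random matrix products have to be used; it is also the mechanism by which a single ``accidental'' bifurcation in $U$ is forced to contribute positive mass to $T_{{\rm bif},\mu}$.
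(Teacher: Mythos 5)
The paper does not actually prove Theorem \ref{thm:support}: it is quoted from \cite[Thm.~3.13]{kleinbif}, the only point made here being that the weaker moment condition (M) suffices (as explained at the beginning of \cite[\S 3.3]{kleinbif}). So there is no in-paper proof to compare with, and your proposal must stand on its own. Your half (i), $\supp(T_{{\rm bif},\mu})\subset\bif$, is essentially sound, with the caveat that the input you describe as ``classical'' --- the a.s.\ convergence of $\unsur{n}\log\abs{\tr(\rho_\la(g_n\cdots g_1))}$ to $\chi_\mu(\la)$ at a fixed parameter --- is itself a non-trivial large-deviations result needing the exponential moment (it is \cite[Cor.~A.2]{kleinbif}), not just Furstenberg's theorem. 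The problem is half (ii): as you yourself signal (``the plan is\dots'', ``the hard part will be (ii)''), it is a programme, not a proof, and its pivotal steps are missing or incorrect.

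Concretely: (a) from $dd^c u_n=\unsur{2n}[Z(w_n,t)]\to 0$ on $U$ you cannot conclude that the divisors ``do not cluster anywhere inside $U$''; weak convergence of the currents to $0$ is perfectly compatible with every $Z(w_n,t)$ meeting every compact subset of $U$, since the masses are divided by $n$ (compare $u_n(\la)=\unsur{n}\log\abs{\la}$, for which $dd^c u_n\to 0$ while $0$ lies on every zero set). Thus the key implication ``$\chi_\mu$ pluriharmonic on $U$ $\Rightarrow$ no accidental change of type in $U$'' is exactly what has to be proved, and neither it nor your parenthetical claim that a transverse accidental parabolic forces $T_{{\rm bif},\mu}$ to carry mass is established anywhere. (b) The asserted locally uniform convergence of $\rho_\la(w_n)/\norm{\rho_\la(w_n)}$ to a holomorphically varying rank-one limit does not follow from Hartogs-type control of the scalar potentials $u_n$: the a.s.\ convergence at each fixed $\la$ has a path-exceptional set depending on $\la$, and uniformity in $\la$ is essentially equivalent to the stability statement you are trying to prove. (c) The step ``no $\tr^2(\rho_\la(g))$ reaches $4$ on $U$, so every $\rho_\la(g)$ stays loxodromic'' is wrong as stated: elliptic elements have $\tr^2\in[0,4)$, and in the parabolic families considered here the peripheral elements have $\tr^2\equiv 4$; the relevant criterion is local constancy of the type of each $\rho_\la(g)$ (non-occurrence of accidental parabolics/elliptics), which then feeds into the $\la$-lemma and Sullivan's structural stability theorem. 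Until (a)--(b) are supplied, the inclusion $\bif\subset\supp(T_{{\rm bif},\mu})$ remains a genuine gap.
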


A comment is in order here: the theorem was stated in \cite{kleinbif} under the more restrictive moment condition 
\begin{equation}\label{eq:exponential moment in the group}
 \text{there exists } s>0 \text{ s.t. }\int_G \exp(s\;  \length (g)) d\mu(g) <\infty,
\end{equation}
 where the length is relative to some finite  system of generators of $G$. But, as explained in the beginning of \cite[\S 3.3]{kleinbif}, it still holds under (M). 
 
 \medskip
 
 It is important to observe that these notions also make sense on (resp. relative) character varieties. Indeed let $
 \Lambda$ be a piece of complex submanifold of the set of characters of non elementary 
 representations of $G$ into $\PSL$, $\Lambda\subset \mathcal{X}^{NE}(G)$ 
 (resp. $\Lambda\subset \mathcal{X}_
 {\rm par}^{NE}(G)$). Then it can be lifted to a holomorphic family of representations, transverse to 
 the foliation by conjugacy classes, so we can define a stability (resp. bifurcation) locus on $\La$, which does not depend on the choice of lifting. 
 Likewise, the validity of the moment condition 
 (M), as well as the value of the Lyapunov exponent, are independent of the lift. Hence there is a well-defined bifurcation current on $\La$, and Theorem \ref{thm:support} holds. 
  
 \subsection{Brownian motion}   \label{subs:brownian} 
   More details on the material in this paragraph  can be found in \cite{candel conlon, grigoryan, franchi le jan} 
 Let $X$ be a complete hyperbolic Riemann surface, endowed with its hyperbolic metric. Brownian motion on $X$ is defined as the Markov 
 process with transition probability $P_t(x, \cdot) =  p_t(x,y)dy$, where the volume element $dy$ is relative to the hyperbolic metric, and the
 heat kernel $p_t$ satisfies 
 $$\frac{\fr}{\fr t} p_t(x, \cdot) =   \Delta p_t(x, \cdot) \text{ and } \lim_{t\cv 0} p_t(x,\cdot)dx =\delta_x. $$
 (It is also common to work with $\frac12 \Delta$. We choose this normalization to stay in accordance with the usual convention in foliation theory (see \cite{candel conlon})).
 This process is a diffusion with continuous sample paths so    it gives rise for  every $x\in X$ to a probability measure $W_x$
 (the Wiener measure) on the space    $\om_x$ of continuous paths 
 $\omega:[0,\infty[\cv X$  such that  $\omega(0) =x$ (itself endowed with the compact open topology).
We also denote by $\om$ the space of  continuous paths $\omega:[0,\infty[\cv X$ and put $W = \int W_x dx$.
 We often the classical notation $\pp_x$ 
(resp.  $\ee_x$) for the probability (resp. expectation) of some event relative to $W_x$. 

It is   useful to think of sample paths of the Brownian motion on $X\simeq
\Gamma  \backslash \mathbb H^2$ as projections of Brownian paths on $\mathbb H^2$. To say it differently, 
 the Wiener measures associated to a point in $X$ and  of one of its lifts to $\hh^2$ are naturally identified. 
With our normalization, the linear drift of Brownian motion on $\hh^2$ is equal to 1, that is, 
for $W_0$--a.e. Brownian path $\omega$, $d_
\hh(0, \omega(t))\sim t$ as $t\cv\infty$ (see the appendix in \cite{deroin dupont} for an elementary proof).   

We will   require the following precise estimate for the radial density 
$k^0(t,r)$ 
 of the law at time $t$ of Brownian motion issued from $0$  (i.e. $P_t(0, \cdot)$) in  $\hh^2$ 
\begin{equation}\label{eq:davies}
k^0(t,r) \asymp (1+r^{-1})^{-1} t^{-1} (1+r+ t)^{-1} (1+r) \exp \lrpar{- \frac{(r-t)^2}{4t}}
\end{equation}
(see e.g.
 \cite[Thm 5.7.2]{davies spectral theory}).

 \section{The Lyapunov exponent associated to Brownian motion}\label{sec:brownian}

Let $S$ be a topological surface of genus $g$ with $n$ punctures, 
with a marked point $\star$,  and such that  $\chi(S)<0$.  Recall that $G= \pi_1(S, \star)$.
In  \S \ref{subs:def lyap} we show that the data of a structure of 
 Riemann surface of finite type on $S$ gives  rise to a natural 
Lyapunov exponent function $\chi_{\rm Brown}$ on $\mathrm{Hom}_{\rm par}(G, \PSL)$ (resp. $\cX_{\rm par}(G)$). 
Proving that $dd^c \chi_{\rm Brown}$ is a genuine bifurcation current occupies the remainder of this section. Our main tool is a 
 discretization procedure for Brownian motion, which will be discussed  thoroughly in  \S\ref{subs:discretization}. From this point, the main 
 issue is to obtain estimates for a probabilistic stopping time, associated to the discretization. The case where $X$ is compact is 
 substantially simpler, and will be discussed in \S\ref{subs:compact}. In \S\ref{subs:proof}, we deal with the general case. 
 The proof of Theorem \ref{thm:support brownien} is given in \S \ref{subs:proof proof}.
 In    \S\ref{subs:geodesic}, we re-interpret these results  from the geodesic point of view, from which Proposition \ref{propo:A} follows.

\subsection{Definition}\label{subs:def lyap}
Let $X$ be a  structure of Riemann surface of finite type on $S$ (i.e. a neighborhood of each puncture 
is biholomorphic  to $\dd^*$). The Euler characteristic assumption implies that $X$ is hyperbolic, and we endow $X$ with its 
hyperbolic metric of curvature $-1$. 
Let $(\widetilde X, \star)$ be the universal cover   of $(X, \star)$.  We fix an isomorphism
 between  $\widetilde X$ and $\hh^2$ mapping $\star$ to 0. Then the deck transformation group 
 $G$ becomes naturally isomorphic to a lattice $\Gamma\leq \mathrm{PSL(2,\rr)}$. 
 We let $\rho_{\rm can}$ be the (Fuchsian) uniformizing representation of $G$ 
such that $\Gamma  = \rho_{\rm can}(G)$, and in the sequel we conveniently identify $G$ and $\Gamma$ when needed. 
 
 
Fix a representation $\rho\in \mathrm{Hom}(G, \PSL)$. 
If $\gamma$ is any loop on $X$ based at $\star$, and $[\gamma]\in G$ denotes its class (from now on we drop the square brackets), we may consider the quantity $\log \norm{\rho(\gamma)}$. 
%
%
\medskip

We are now ready to define the natural  Lyapunov exponent of a parabolic representation  of $G$, associated to the complex structure on $S$. Justifying the validity of this definition will occupy the remainder of this subsection. 

 \begin{defprop}\label{def:lyap}
Let $X$ be a hyperbolic Riemann surface of finite type and $\rho:\pi_1(X, \star)\cv \PSL$ be a parabolic representation

Define a family of loops as follows: for $t>0$, consider a Brownian path $\omega$ issued from $\star$, and concatenate $\omega\rest{[0, t]}$ with a shortest geodesic joining $\omega(t)$ and $\star$, thus obtaining a closed loop $\widetilde \omega_t$. 

Then for $W_\star$ a.e. $\omega$ the limit 
\begin{equation}\label{eq:deflyap}
\chi_{\rm Brown}(\rho) = \lim_{t\cv\infty} \unsur{t}\log \norm{\rho\lrpar{\widetilde \omega_t}}
\end{equation}
 exists and does not depend on $\omega$.  
This number is by definition the Lyapunov exponent of $\rho$, associated to the complex structure 
$X$.  
 \end{defprop}

Notice that there is a little ambiguity in the definition for there might be several shortest paths joining 
  $\omega(t)$ and $\star$. As it will be clear from the proof, this does not affect the existence nor the value of the limit in \eqref{eq:deflyap}. It will also follow from the proof that $\chi_{\rm Brown}$ does not depend on $\star$. 
  Since with our normalization,  the escape rate of the  Brownian motion in $\hh^2$ is $1$, we deduce from 
 \eqref{eq:norm distance} that $\chi_{\rm Brown}(\rho_{\rm can}) = \frac12$. 
 
  \medskip
 
 To prove the existence of $\chi_{\rm Brown}$ and study its properties, we will need to consider the holonomy of non-closed paths on $X$. For this, we work on  the {\em suspension} of $\rho$.
  For any representation $\rho\in \mathrm{Hom}(G, \PSL)$, the {suspension} of $\rho$ 
is by definition 
the quotient of  $\mathbb H^2 \times \pu$ under  the diagonal action of $G$. To be specific, $M_\rho = \mathbb H^2 \times \pu /\sim$ where the equivalence relation is defined by $(p,z)\sim (p', z')$ if there exists $g \in G$ such that $(p', z') = (g(p), \rho(g)(z))$.  

 The natural projection $\pi:M_\rho\cv X$  makes $M_\rho$ a flat $\pu$-bundle over $X$, 
 and the horizontal foliation on  
 $\hh^2\times \pu$ descends to 
 $M_\rho$  as a holomorphic foliation with leaves transverse to the $\pu$ fibers, whose monodromy representation is $\rho$.
 
 The point is to study the   
growth rate of holonomy between vertical fibers in $M_\rho$ along generic  Brownian paths  in $X$, 
measured with respect to some smooth family of spherical metrics on the fibers. 
When $X$ is compact the choice of the metrics on   fibers is harmless. 
On the other hand in the non-compact case this requires some care,  so let us dwell a little bit on this point.

 \medskip
 
By a spherical metric, we mean a conformal metric on $\mathbb P^1$ of curvature $1$. 
Equivalently, it is the Fubini-Study metric associated to some Hermitian structure on $\cd$.
Fix a smooth family of spherical metrics on the fibers of  $M_{\rho}$. If  $\omega$ is any continuous path on  $X$, we denote by 
$h_\rho(\omega)$ its holonomy $h_\rho(\omega):\pi^{-1}(\gamma(0))\cv \pi^{-1}(\gamma(1))$. 
We define $\norm{h_\rho(\omega)}$ as $\sup_{x\in \pi^{-1} (\gamma (0) )} \norm{d_x h_\rho(\omega)}$, where the norm is relative to the choice of conformal metrics on the fibers.

We say that the family of spherical metrics is {\em normalized} if the induced  
metric on $\pi^{-1}(\star)$ is the standard Fubini-Study metric. This means  that the 
natural map $\set{0}\times \pu \cv \pi^{-1}(\star)$ induced by the definition of $M_\rho$ as a quotient, is an isometry when $\pu$ is given the standard Fubini-Study metric.  This condition is meant to ensure that  for closed loops, the definition of $\norm{h_\rho(\omega)}$ coincides with the natural one.
 
\medskip

By definition a family of spherical metrics on $M_{\rho}$ is 
\textit{Lipschitz} if  there exists a constant $C_\rho>0$ such that for every smooth path $\omega$ 
on $X$,   it holds that 
\begin{equation}\label{eq:lipschitz}  
\log \norm{h_\rho(\omega)} \leq C_\rho \mathrm{length} (\omega)   ,
\end{equation}
where the length is relative to the hyperbolic metric on $X$. 

When $X$ is compact any smooth family of metrics is of course Lipschitz. In the non compact case we have the following result.

\begin{prop}\label{prop:lipschitz}
If $\rho$ is a parabolic representation of $G$, then there exists a smooth normalized Lipschitz family of spherical metrics on the fibers 
of $M_\rho$. Furthermore the constant $C_\rho$ in \eqref{eq:lipschitz} can be chosen to be locally uniformly bounded on $\mathrm
{Hom}_{\rm par}(G, \PSL)$.
\end{prop}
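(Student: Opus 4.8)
The plan is to translate the statement into the language of equivariant maps between symmetric spaces and then to build such a map explicitly, using a parabolic normal form in the cusps and an arbitrary smooth choice on the (cocompact) thick part. To set this up, identify the space of spherical (i.e.\ Fubini--Study) metrics on $\pu$ with $\PSL/\mathrm{PSU}(2)\cong\hh^3$. Since $M_\rho=\hh^2\times\pu/\!\sim$, a smooth family of spherical metrics on the fibres of $M_\rho$ is the same as a smooth $\rho$-equivariant map $\sigma\colon\hh^2\to\hh^3$: here $\sigma(p)$ is the metric induced on the fibre $\pi^{-1}(\bar p)$ read in the chart $\set{p}\times\pu$, and the relation $\sigma(gp)=\rho(g)_*\sigma(p)$ is exactly the compatibility needed for this to be well defined; the family is normalized precisely when $\sigma(0)=0$, the class of the standard Fubini--Study metric. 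Because the flat connection on $M_\rho$ realizes the holonomy $h_\rho(\omega)$ of a path $\omega$ as the identity map of $\pu$ between the fibre metrics $\sigma(\widetilde\omega(0))$ and $\sigma(\widetilde\omega(1))$ at the endpoints of a lift $\widetilde\omega$, and since the logarithm of the conformal distortion of the identity between two Fubini--Study metrics is a uniformly Lipschitz function of the pair of metrics in $\hh^3$, there is a universal constant $C_0$ with
\begin{equation*}
\log\norm{h_\rho(\omega)}\ \le\ C_0\, d_{\hh^3}\big(\sigma(\widetilde\omega(0)),\sigma(\widetilde\omega(1))\big)\ \le\ C_0\,\mathrm{Lip}(\sigma)\,\length(\omega)
\end{equation*}
for every smooth path $\omega$ on $X$ (the normalization \eqref{eq:norm distance} being what makes this agree with the natural norm in the closed-loop case). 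Hence it suffices to construct a smooth $\rho$-equivariant $\sigma\colon\hh^2\to\hh^3$ with $\sigma(0)=0$ and finite global Lipschitz constant, and to control this constant locally uniformly in $\rho$.

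\emph{The cusp model.} This is the heart of the matter, and the only place where the parabolicity of $\rho$ is used. Fix a puncture with corresponding primitive parabolic $h_i\in G$, and choose an upper half-plane coordinate on $\hh^2$ in which $\rho_{\rm can}(h_i)$ fixes $\infty$ and acts by $z\mapsto z+1$; let $H_i=\set{\im z>c_i}$ be an invariant horoball, the $c_i$ being chosen so that the horoballs in the $G$-orbit of $\bigcup_i H_i$ are pairwise disjoint, project to embedded cusp neighbourhoods of $X$, and avoid $0$. Since $\rho$ is parabolic, $\rho(h_i)$ is a non-trivial parabolic element of $\PSL$; choose an upper half-space coordinate $(\zeta,u)\in\cc\times\rr_{>0}$ on $\hh^3$ in which it fixes $\infty\in\fr\hh^3$ and acts by $(\zeta,u)\mapsto(\zeta+\tau_i,u)$, with $\tau_i=\tau_i(\rho)\in\cc\setminus\set{0}$. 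On $H_i$ define $\sigma(x+iy)=(\tau_i x,\,y)$. This is smooth and $h_i$-equivariant, and evaluating $d\sigma$ on the orthonormal frame $y\fr_x,\ y\fr_y$ of $\hh^2$ shows at once that $\norm{d\sigma}_{\hh^2\to\hh^3}$ is the constant $\max(\abs{\tau_i},1)$ on all of $H_i$. Conceptually, the parabolic drift of $\hh^2$ along horocycles is being synchronized with the parabolic translation of $\rho(h_i)$ on $\hh^3$, the two nested families of horospheres contracting at comparable rates; if $\rho(h_i)$ were loxodromic no equivariant Lipschitz map could exist in the cusp, since $d_{\hh^3}(\sigma(p),\rho(h_i)\sigma(p))$ would then be bounded below by the positive translation length of $\rho(h_i)$, whereas the loops around the puncture have hyperbolic length tending to $0$. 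Extend $\sigma$ to every $G$-translate of $H_i$ by $\rho$-equivariance.

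\emph{Globalization and local uniformity.} Let $\Omega$ be the complement in $\hh^2$ of the open horoballs constructed above; it is $G$-invariant, contains $0$, and $\Omega/G$ is the compact thick part of $X$. Choose any smooth $\rho$-equivariant map $\sigma_0\colon\hh^2\to\hh^3$ with $\sigma_0(0)=0$; these exist because they are sections of the flat $\hh^3$-bundle over $X$ attached to $\rho$, whose fibre is contractible, and the value at one point may be freely prescribed. On a $G$-invariant compact collar of $\fr\Omega$, which we take disjoint from the $G$-orbit of $0$, interpolate between the cusp model $\sigma$ and $\sigma_0$ by geodesic (barycentric) interpolation in the Hadamard manifold $\hh^3$, using a smooth $G$-invariant cut-off pulled back from $X$; denote the resulting map again by $\sigma$. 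It is smooth, $\rho$-equivariant, satisfies $\sigma(0)=0$, agrees with the cusp model (hence has $\norm{d\sigma}\le\max_i\abs{\tau_i}$) deep in the cusps, and is smooth and $G$-equivariant on the cocompact set obtained from $\Omega$ by adjoining the collar, hence Lipschitz there by cocompactness. So $\sigma$ has a finite Lipschitz constant, which we take for $C_\rho$. For the last assertion, the parabolic fixed point of $\rho(h_i)$ in $\fr\hh^3$ and the parameter $\tau_i(\rho)$ depend holomorphically on $\rho$, the horoballs $H_i$ depend only on $\rho_{\rm can}$, and $\sigma_0$ together with the interpolation may be chosen to vary continuously with $\rho$; hence $C_\rho$ stays bounded near any $\rho_0\in\hompar$. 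The only step requiring genuine care is the cusp computation above; the remaining ingredients (contractibility of $\hh^3$, $\mathrm{CAT}(0)$ interpolation, cocompactness of the thick part) are routine.
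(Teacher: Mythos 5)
Your proposal is correct and follows essentially the same route as the paper: identify spherical metrics on $\pu$ with points of $\hh^3$, reduce the Lipschitz property of the family of fibre metrics to constructing a $\rho$-equivariant Lipschitz map $\hh^2\to\hh^3$ with value $0$ at $0$, build it in the cusps using parabolicity of $\rho(h_i)$ (your explicit $(x+iy)\mapsto(\tau_i x,y)$ model is the paper's isometric immersion after normalizing $\tau_i$), and globalize via contractibility of $\hh^3$ and cocompactness of the thick part, with the same observation that $\norm{d\sigma}$ is $G$-invariant. Your CAT(0) geodesic interpolation on a collar is just a slightly more explicit version of the paper's ``extend as a smooth equivariant map'' step, so the argument is essentially identical.
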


Notice that the result is false for a general representation. Indeed consider a path of length $L$ turning about $e^L$ times around a cusp of $X$.
Let $g\in G$ representing a simple loop aroung that cusp. 
 Then if $\rho(g)$ is loxodromic,  we see that  \eqref{eq:lipschitz} will be violated. On the other hand it can be shown that the proposition (as well as all results in this section) remains true as soon as   every such $\rho(g)$ has eigenvalues only on the unit circle. 

\begin{proof}
The first observation is that the set of spherical metrics on $\pu$ is parameterized by $\mathbb H^3$. 
This follows from the fact that  
such a metric is the visual metric $\theta(p)$ from a point $p\in \mathbb H^3$ on the sphere at infinity $\pu$. 
Another way to see it is to observe that a spherical metric is defined by a M\"obius map  from $\pu$  
(equipped with the standard Fubini-Study metric) up to precomposition by an isometry, and $\PSL/\mathrm{PSU}(2)\simeq \hh^3$. 

We denote  by $ds_p$ the metric associated to $p\in \hh^3$. 
We claim that if $p,q\in \hh^3$, we have that
 \begin{equation}\label{eq:spsq}
 \log \norm{\frac{ds_p}{ds_q}}_\infty = d_{\hh}(p,q) + O(1).
 \end{equation} 
This follows from an elementary computation. Assume first that $q=0$, 
so that $ds_q^2 = ds_0^2 = 
\frac{\abs{dz}^2}{(1+\abs{z}^2)^2}$ is the usual spherical metric. Let $p \in \hh^3$, $p = A^{-1}(0)$  for some $A\in \PSL$, so that $ds_p	  = A^* ds_0	$. Putting $A = \lrpar{\begin{smallmatrix} a & b 
\\ c & d\end{smallmatrix}}$, we infer that 
$$ds_p	= \frac{\abs{dz}	}{\abs{az+b}^2 + \abs{cz+d}^2},$$  therefore
 $  \norm{\frac{ds_p}{ds_0}}_\infty = \norm{A^{-1}}^2 =\norm{A}^2$, and finally
 $$ \log  \norm{\frac{ds_p }{ds_0 }}_\infty = 2 \log \norm{A} = d_{\hh}(0, p) +O(1).$$ 
To handle the general case, pick an isometry $h$ of $\hh^3$ bringing $q$ back to 0, and observe that since $\frac{ds_p}{ds_q}$ 
is a function on $\pu$, 
$$  \norm{\frac{h^*ds_p }{h^*ds_q }}_\infty  =   \norm{h^* \frac{ds_p }{ds_q }}_\infty=   \norm{\frac{ds_p }{ds_q }}_\infty .$$

\medskip

A smooth family of spherical metrics on a trivial line bundle $D \times \pu$ is now simply the data of a smooth map 
$f:D\rightarrow \mathbb H^3$. A smooth family of spherical metrics on the fibers of $\hh^2\times \pu$ 
descends to a family of spherical metrics on $M_{\rho}$ if and only if  the associated 
 map $f$ is $\rho$-equivariant, i.e. for every $g\in G$, $f\circ  \rho_{\rm can}(g) = \rho(g)\circ f$. 
 Furthermore the family is Lipschitz iff $f$ is, and 
 the constant $C_\rho$ is controlled by the Lipschitz constant of $f$.   This follows 
 from \eqref{eq:spsq}, since  the holonomy of the horizontal foliation on 
  $\hh^2\times \pu$ is the identity.

%
  
\medskip

To prove the proposition it is thus enough to show that there exists a smooth Lipschitz $\rho$-equivariant map $f: \mathbb H^2 \rightarrow \mathbb H^3$.  The family of spherical metrics will be normalized provided $f(0) = 0$.  
To construct such an $f$,  recall that since
 $X$ is of finite type, there exists a finite number of open disjoint subsets $C_1,\ldots, C_r$ of $X$ such that 
$X\setminus \bigcup _i C_i $ is compact, and each $C_i$ is isometric to the quotient of $\{ z\in \mathbb H^2 \ |\ \Im z > \rho_i\}$.
  by the translation $z\mapsto z+1$ for some positive $\rho_i$. For every $i=1,\ldots,r$, we pick some lift $\widetilde{C_i}\subset \mathbb H^2$ of $C_i$, and we first define $f$ on the closure of $\widetilde{C_i}$. Denote by $g_i\in G$ a generator of the stabilizer of $\widetilde{C_i}$ in $G$. By assumption, $\rho(g_i)$ is parabolic, hence conjugate  to $g_i$. Thus, there exists an isometric immersion $f: \overline{\widetilde{C_i}} \rightarrow \mathbb H^3$ such that $f\circ g_i = \rho(g_i) \circ f$ (we take the map in $\mathrm{PSL} (2,\mathbb C)$ which conjugates $g_i$ to $\rho(g_i)$). We extend the map $f$ by $\rho$-equivariance to a map defined on $ \bigcup_i G \overline{\widetilde{C_i}}$. Then, because $\mathbb H^3$ is contractible, a classical topological argument shows that the map $f$ can be extended as a smooth $\rho$-equivariant map from $\mathbb H^2$ to $\mathbb H^3$ such that $f(0)=0$. We claim that this map is Lipschitz. Indeed, $\norm{Df}$ is $G$-invariant (the norm of $Df$ is measured using hyperbolic metrics on source and target), thus descends to a function on $X$. But since $f$ is an isometric embedding on $\widetilde{C_i}$, we have $\norm {Df} = 1$ on the cusps $C_i$'s. Hence $\norm{Df}$ is bounded, since $X\setminus \cup_i C_i$ is compact, and $f$ is indeed our desired 
  smooth  Lipschitz $\rho$-equivariant map.
Finally, it is clear that the Lipschitz constant of $f$ can be chosen to be locally uniformly bounded in 
$\mathrm{Hom}_{\rm par}(G, \PSL)$, so the same is true of the constant $C_\rho$. 
\end{proof}

The following consequence of the previous proposition will be useful.

\begin{cor}\label{cor:compar} 
 For every parabolic representation $\rho$
there exists a constant $\beta_\rho$ depending only on the constant $C_\rho$ of \eqref{eq:lipschitz} 
such that  $\norm{\rho(g)}\leq  \norm{\rho_{\rm can} (g)}^{\beta_\rho}$.  
\end{cor}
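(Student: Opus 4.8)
The plan is to deduce Corollary~\ref{cor:compar} directly from Proposition~\ref{prop:lipschitz} together with the basic dictionary between norms in $\PSL$ and hyperbolic displacement recorded in \eqref{eq:norm distance}. The key observation is that the Lipschitz map $f:\hh^2\to\hh^3$ furnished by Proposition~\ref{prop:lipschitz} is $\rho$-equivariant with $f(0)=0$, so $f$ converts the $\rho_{\rm can}$-action on $\hh^2$ into the $\rho$-action on $\hh^3$ while distorting distances to the base point by at most the Lipschitz constant of $f$.

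Concretely, here is the order of steps I would carry out. First, fix a parabolic representation $\rho$ and let $f:\hh^2\to\hh^3$ be the smooth Lipschitz $\rho$-equivariant map with $f(0)=0$ constructed in the proof of Proposition~\ref{prop:lipschitz}; call its Lipschitz constant $L_\rho$, which is controlled by (indeed essentially equal to) $C_\rho$. Second, for $g\in G$ estimate $d_\hh(0,\rho(g)(0))$: by equivariance $\rho(g)(0)=\rho(g)(f(0))=f(\rho_{\rm can}(g)(0))$, hence
\begin{equation*}
d_{\hh^3}\big(0,\rho(g)(0)\big) = d_{\hh^3}\big(f(0), f(\rho_{\rm can}(g)(0))\big) \leq L_\rho\, d_{\hh^2}\big(0,\rho_{\rm can}(g)(0)\big),
\end{equation*}
the last inequality because $f$, being Lipschitz with constant $L_\rho$ along every smooth path (inequality \eqref{eq:lipschitz} at the level of $f$), contracts the hyperbolic distance between any two points by at most that factor: take a geodesic segment in $\hh^2$ from $0$ to $\rho_{\rm can}(g)(0)$ and apply the path estimate. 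Third, translate displacements back into norms via \eqref{eq:norm distance}. From $\norm{\gamma}_2^2 = 2\cosh(d_\hh(0,\gamma 0))$ one gets $2\log\norm{\gamma} = d_\hh(0,\gamma 0) + O(1)$, uniformly; combining this for $\rho(g)$ and for $\rho_{\rm can}(g)$ with the displacement inequality yields
\begin{equation*}
2\log\norm{\rho(g)} \leq L_\rho\, d_\hh\big(0,\rho_{\rm can}(g)(0)\big) + O(1) = 2 L_\rho \log\norm{\rho_{\rm can}(g)} + O(1),
\end{equation*}
which is exactly $\norm{\rho(g)} \leq C\,\norm{\rho_{\rm can}(g)}^{L_\rho}$ for a constant $C$ depending only on $C_\rho$. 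Absorbing the constant $C$ into the exponent at the (harmless) cost of a slightly larger $\beta_\rho$ — using that $\norm{\rho_{\rm can}(g)}\geq 1$ for all $g$ since $\norm{\cdot}$ dominates the spectral radius and $1$ — gives the stated form $\norm{\rho(g)}\leq\norm{\rho_{\rm can}(g)}^{\beta_\rho}$ with $\beta_\rho$ depending only on $C_\rho$.

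The only genuinely delicate point is the passage from the \emph{path} Lipschitz estimate \eqref{eq:lipschitz} (or its $f$-level analogue $\norm{Df}\le L_\rho$) to a \emph{distance} Lipschitz estimate $d_{\hh^3}(f(x),f(y))\le L_\rho\, d_{\hh^2}(x,y)$; this is immediate by integrating $\norm{Df}$ along a minimizing geodesic, but one should be slightly careful that the relevant norm of $Df$ is the one comparing the hyperbolic metrics on source and target (as stated in the proof of Proposition~\ref{prop:lipschitz}), so that the inequality is genuinely about hyperbolic distances on both sides. Everything else is the elementary norm/displacement dictionary of \eqref{eq:norm distance} and the bookkeeping of additive constants, which the $O(1)$ notation and the convention $\norm{\rho_{\rm can}(g)}\geq 1$ handle cleanly. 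I expect no real obstacle; the content of the corollary is essentially a restatement of Proposition~\ref{prop:lipschitz} in multiplicative rather than additive language.
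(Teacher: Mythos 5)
Your argument is in substance the paper's: both reduce the corollary to the displacement comparison $d_{\hh^3}(0,\rho(g)0)\leq \mathrm{const}\cdot d_{\hh^2}(0,\rho_{\rm can}(g)0)+O(1)$ and then invoke the norm--displacement dictionary \eqref{eq:norm distance}. The paper obtains the comparison by applying \eqref{eq:lipschitz} to the projection of the geodesic segment $\sigma_g=[0,\rho_{\rm can}(g)0]$, whose holonomy is exactly $\rho(g)$ and whose length is the displacement of $\rho_{\rm can}(g)$; you obtain it through the equivariant map $f$. That difference is mostly cosmetic, but it does affect the stated dependence of $\beta_\rho$: the corollary asks for $\beta_\rho$ depending only on $C_\rho$, while your exponent is the Lipschitz constant $L_\rho$ of $f$, and Proposition \ref{prop:lipschitz} only records that $C_\rho$ is controlled by $L_\rho$, not the converse. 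Either argue as the paper does (so that only $C_\rho$ enters), or note that \eqref{eq:spsq} yields the coarse converse $d_{\hh^3}(f(x),f(y))\leq C_\rho\, d_{\hh^2}(x,y)+O(1)$, which is all your displacement step actually uses.

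The one genuine gap is the final absorption step. From $\norm{\rho(g)}\leq C\,\norm{\rho_{\rm can}(g)}^{L}$ with $C>1$ you cannot pass to $\norm{\rho(g)}\leq\norm{\rho_{\rm can}(g)}^{\beta}$ merely because $\norm{\rho_{\rm can}(g)}\geq 1$: if there were elements $g\neq e$ with $\norm{\rho_{\rm can}(g)}$ arbitrarily close to $1$, then $\norm{\rho_{\rm can}(g)}^{\beta}$ would also be arbitrarily close to $1$ for every fixed $\beta$, and no choice of $\beta$ absorbs the factor $C$. What saves the statement is that $\Gamma=\rho_{\rm can}(G)$ is discrete and contains no elliptic elements, so the displacement $d_\hh(0,\rho_{\rm can}(g)0)$ is bounded below for $g\neq e$, hence $\norm{\rho_{\rm can}(g)}\geq 1+c$ for some $c>0$ (the case $g=e$ being trivial); then $C\leq\norm{\rho_{\rm can}(g)}^{\alpha}$ for a suitable $\alpha$ and the constant can indeed be moved into the exponent. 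This is exactly the closing step of the paper's proof and must be said explicitly; with it, your proof is complete.
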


\begin{proof}
 Indeed for $g\in G$, let $\sigma_g$ be the geodesic segment $[0, \rho_{\rm can}(g)0]$ in $\hh^2$. Then in $M_\rho$, the holonomy of the projection of  $\sigma_g$ in $X$ is $\rho(g)$.  By 
 \eqref{eq:norm distance}, 
 \eqref{eq:lipschitz} and the normalization of the family of spherical metrics
   we infer that 
$$\log \norm{h_\rho({\sigma_g})} = \log \norm{\rho(g)}  \leq C_\rho \length(\sigma_g) \leq C_\rho 
\argcosh \frac{\norm{\rho_{\rm can}(g)}^2_2}{2}\leq C_\rho \log\lrpar{2\norm{\rho_{\rm can}(g)}^2}.$$ 
Here we use the inequalities $\norm{\gamma}^2_2\leq 2 \norm{\gamma}^2$ and $\argcosh x\leq \log 2x$. Hence we get that $\norm{\rho(g)}\leq (2\norm{\rho_{\rm can}(g)})^{2 C_\rho}$. To conclude the proof, just note that since $\rho_{\rm can}(g)$ contains  
no elliptic transformations, 
 there exists $c>0$ such that 
$\norm{\rho_{\rm can}(g)}>1+c$, so 
there exists $\alpha$ such that $2\leq   \norm{\rho_{\rm can}(g)}^\alpha$. 
The result follows.  
\end{proof}

We have the following  analogue of Definition \ref{def:lyap} in this context. 

 \begin{prop}\label{prop:lyap}
Let $X$ be a hyperbolic Riemann surface of finite type, $\rho:\pi_1(X)\cv \PSL$ be a parabolic representation, and $\norm{.}$ a smooth normalized  Lipschitz  family of spherical metrics on $M_{\rho}$. With notation as above, for  $W$-a.e. path  $\omega$, the limit 
 \begin{equation}\label{eq:proplyap}
 \chi  (\rho; \omega) = \lim_{t\cv\infty} \unsur{t}\log\norm{h_\rho\big({\omega\rest{[0, t]}}\big)}
 \end{equation}
  exists and does not depend on  $\omega$ nor on the the choice of $\norm{\cdot}$.  
 \end{prop}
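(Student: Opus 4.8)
The strategy is to realize the holonomy cocycle along Brownian paths as a genuine linear cocycle over an ergodic dynamical system, and then invoke Kingman's subadditive ergodic theorem. The natural base dynamical system is the \emph{shift} (or \emph{time-shift}) flow $(\theta_s)_{s\geq 0}$ on the path space $\Omega$, where $(\theta_s\omega)(t) = \omega(s+t)$. With respect to the measure $W = \int W_x\, dx$, which is (after normalizing the infinite-volume issue, see below) stationary and ergodic for this flow --- this is the standard fact that the hyperbolic surface $X = \Gamma\backslash\hh^2$ has finite volume, so Brownian motion on $X$ has the hyperbolic area as a (finite, hence normalizable) stationary measure, and the time-shift is ergodic by a Hopf-type argument --- the quantity $a(s,\omega) := \log\norm{h_\rho(\omega\rest{[0,s]})}$ is \emph{subadditive}: $h_\rho(\omega\rest{[0,s+s']}) = h_\rho((\theta_s\omega)\rest{[0,s']})\circ h_\rho(\omega\rest{[0,s]})$ gives $a(s+s',\omega)\leq a(s',\theta_s\omega) + a(s,\omega)$ by submultiplicativity of the operator norm. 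Thus I would first set up this subadditive cocycle carefully, checking the measurability and the cocycle identity, using that holonomy of the flat bundle $M_\rho$ is well-defined along continuous paths.

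The second step is to verify the integrability hypothesis of Kingman, namely $\ee(a(s,\cdot)^+) < \infty$ for (some, hence all) $s > 0$. Here the Lipschitz property from Proposition \ref{prop:lipschitz} enters decisively: since $\log\norm{h_\rho(\omega)}\leq C_\rho\length(\omega)$ for smooth paths, and Brownian paths are approximable by such with controlled length --- more to the point, the relevant estimate is that the holonomy along $\omega\rest{[0,s]}$ is controlled by the hyperbolic distance travelled, and by the linear drift fact recalled in \S\ref{subs:brownian}, $\ee_x(d_\hh(x,\omega(s)))$ grows linearly. Actually the cleanest route is: $a(s,\omega)\leq C_\rho\cdot(\text{length of } \omega\rest{[0,s]})$ is not directly usable since Brownian paths have infinite length, so instead one bounds $\norm{h_\rho(\omega\rest{[0,s]})}$ using the equivariant map $f$ of Proposition \ref{prop:lipschitz}: lifting $\omega$ to $\hh^2$, the holonomy is estimated via \eqref{eq:spsq} by $d_\hh(f(\tilde\omega(0)), f(\tilde\omega(s))) + O(1)$, and since $f$ is Lipschitz this is $\leq C_\rho d_\hh(\tilde\omega(0),\tilde\omega(s)) + O(1)$. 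Then one invokes the Gaussian-type tail bound \eqref{eq:davies} for the radial density, which gives exponential moments for $d_\hh(\tilde\omega(0),\tilde\omega(s))$, far more than the needed $L^1$ bound. This step is where I expect the only real (though routine) work to lie --- carefully dealing with the non-compactness and the lift to $\hh^2$, and reconciling "length" versus "displacement" bounds.

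Given integrability, Kingman's theorem yields that $\frac1t a(t,\omega) = \frac1t\log\norm{h_\rho(\omega\rest{[0,t]})}$ converges $W$-a.e. and in $L^1$ to a $\theta$-invariant function, which by ergodicity of the time-shift is a.e. constant; call it $\chi(\rho)$. That it is nonnegative is clear since $\norm{h_\rho}\geq 1$ (the norm is a supremum over the fiber of derivative norms, and holonomy is a conformal isomorphism of spheres with a fixed normalized metric, so by an area argument its maximal dilation is $\geq 1$). Independence of the choice of normalized Lipschitz family $\norm{\cdot}$: any two such families of spherical metrics on the fibers differ by a map $X\to\hh^3$ which, by the Lipschitz hypothesis and normalization at $\star$, is bounded --- more precisely, by compactness of $X\setminus\bigcup C_i$ and the fact that both families are isometric (hence agree up to bounded distortion) near the cusps, the ratio $\log\norm{ds_p/ds_q}$ over the two metrics on each fiber is uniformly bounded; hence $\log\norm{h_\rho(\omega\rest{[0,t]})}$ computed with the two metrics differ by $O(1)$ uniformly in $t$, which washes out after division by $t$. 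Finally, independence of $\omega$ is exactly the ergodicity conclusion, and one should remark (as in Definition-Proposition \ref{def:lyap}) that starting at $\star$ versus integrating over $X$ gives the same constant, since $W_\star$-a.e. statements and $W$-a.e. statements about a shift-invariant limit agree by the Markov property --- one waits a unit of time and applies the $W$-statement to $\theta_1\omega$. I would close by noting that the comparison of this $\chi(\rho)$ with the $\chi_{\rm Brown}(\rho)$ of Definition-Proposition \ref{def:lyap}, obtained by closing up the path, is deferred (the closing-up changes the holonomy by $h_\rho$ of a geodesic of length $\asymp d_\hh(0,\tilde\omega(t)) \sim t$ times a bounded factor, but one must check this does not disturb the limit --- this is presumably the content of the next few paragraphs of the paper).
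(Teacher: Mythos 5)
Your proposal is essentially the paper's own argument: view $H_t(\omega)=\log\norm{h_\rho(\omega\rest{[0,t]})}$ as a subadditive cocycle over the time shift on $(\Omega,W)$ (invariant since the finite hyperbolic area is stationary), apply Kingman, and obtain integrability from the Lipschitz family of spherical metrics together with the heat-kernel decay \eqref{eq:davies}, bounding the holonomy by the hyperbolic displacement of the lift rather than by length. The only shaky side point is your claim that two normalized Lipschitz families differ by a uniformly bounded amount near the cusps (the definition of a Lipschitz family does not force this), but this is harmless since, as in the paper, independence of the choice of metric follows a posteriori from the comparison with $\chi_{\rm Brown}$ that you defer.
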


 \begin{proof}
 Let $\sigma$ be the shift semi-group acting on $\Omega$ by $\sigma_s:\omega(\cdot) \mapsto \omega(\cdot+s)$.  Recall that $W=\int W_x dx$. Since the Riemannian measure is invariant under heat flow,  $W$  is invariant under the time shift $\sigma_s:\omega(\cdot) \mapsto \omega(\cdot+s)$. The family of functionals $H_t(\omega):\omega\mapsto \log\norm{h_\rho\big({\omega\rest{[0, t]}}\big)}$ defines a sub-additive cocycle. In order to apply Kingman's subadditive ergodic theorem, we need to check that this cocycle is $W$-integrable, that is, that 
 $\int  \log\norm{dh_\rho\big({\omega\rest{[0, t]}}\big)} dW(\omega)<\infty$ for some (hence every)
 $t>0$. 
For this we use the  Lipschitz property of the family of spherical metrics. Let  $\widehat{X}$ be a Dirichlet fundamental domain containing $0$
  for the action of $\Gamma$ on $\hh^2$. For a Brownian path $\omega$, let $\widetilde{\omega}$ be its lift to $\hh^2$ starting from the    lift of $\omega(0)$  lying in   $\widetilde{X}$. By \eqref{eq:lipschitz} we have that 
 \[  \log \norm{h_\rho\big({\omega_{|[0,t]}}\big)} \lesssim d_{\hh} (\widetilde \omega(0), \widetilde\omega (t) ) + 
d_{\hh}(0,\widetilde  \omega(0)).
\]
Thus 
$$\int  \log\norm{dh_\rho\big({\omega\rest{[0, t]}}\big)} dW(\omega) \lesssim 
\int_{\widetilde X}  \ee_x\lrpar{ d_{\hh} (\widetilde \omega(0), \widetilde\omega (t) ) } dx
$$ which is finite thanks to 
the (super)exponential decay of the heat kernel on hyperbolic plane (cf. \eqref{eq:davies}).
\end{proof}
 
The following result  finally justifies the validity of Definition \ref{def:lyap}.

\begin{prop}\label{prop:lyap2}
Let $X$, $\rho$, and $\norm{\ }$ be as in Proposition \ref{prop:lyap}. Then for $W_\star$ a.e. Brownian path $\omega$,  we have that $$\lim_{t\cv\infty} \unsur{t}\log \norm{\rho(\widetilde \omega_t)} = \lim_{t\cv\infty} \unsur{t}\log \norm{h_\rho\big(\omega\rest{[0, t]}\big)} = \chi_{\rm Brown}(\rho).$$ 
 Furthermore the limit in  \eqref{eq:proplyap} is also equal to $\chi_{\rm Brown}(\rho)$.
 \end{prop}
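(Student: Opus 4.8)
The plan is to show that the three limits---along the loop $\widetilde\omega_t$ built by closing up $\omega\rest{[0,t]}$, along the holonomy of the open path $\omega\rest{[0,t]}$, and the limit in \eqref{eq:proplyap}---all coincide $W_\star$-a.s., and that the common value is the constant $\chi_{\rm Brown}(\rho)$ produced by Proposition \ref{prop:lyap}. The key comparison is between $\log\norm{\rho(\widetilde\omega_t)}$ and $\log\norm{h_\rho(\omega\rest{[0,t]})}$: the loop $\widetilde\omega_t$ is the concatenation of $\omega\rest{[0,t]}$ with a shortest geodesic $\delta_t$ joining $\omega(t)$ back to $\star$, and in $M_\rho$ the holonomy of $\widetilde\omega_t$ is the composition $h_\rho(\delta_t)\circ h_\rho(\omega\rest{[0,t]})$. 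Since the metrics are normalized, $\norm{\rho(\widetilde\omega_t)}$ computed intrinsically agrees with $\norm{h_\rho(\widetilde\omega_t)}$ computed with the chosen family of spherical metrics, so by submultiplicativity of the operator norm and the Lipschitz property \eqref{eq:lipschitz},
\begin{equation*}
\bigl|\log\norm{\rho(\widetilde\omega_t)} - \log\norm{h_\rho(\omega\rest{[0,t]})}\bigr| \le \log\norm{h_\rho(\delta_t)} + \log\norm{h_\rho(\delta_t)^{-1}} \le 2 C_\rho\, \length(\delta_t) = 2 C_\rho\, d_\hh\bigl(0, \widetilde\omega(t)\bigr),
\end{equation*}
where $\widetilde\omega$ is a lift of $\omega$ to $\hh^2$ starting at $0$ (the returning geodesic $\delta_t$ on $X$ lifts to the geodesic from $\widetilde\omega(t)$ to the appropriate $\Gamma$-translate of $0$, and has the same length as $[0,\widetilde\omega(t)]$ up to the choice of lift; one uses that $\widehat X$ is a Dirichlet domain to control this). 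Since the linear drift of Brownian motion on $\hh^2$ is $1$, we have $d_\hh(0,\widetilde\omega(t)) \sim t$, so after dividing by $t$ the right-hand side stays bounded; to get that the \emph{difference} of the two normalized quantities tends to $0$ one needs slightly more, namely that $d_\hh(0,\widetilde\omega(t)) = o(t)$ after subtracting the drift---but in fact boundedness is not enough, so instead I would argue directly that \emph{both} normalized quantities converge and identify their limits separately rather than controlling the difference.

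Concretely, the second equality $\lim \tfrac1t \log\norm{h_\rho(\omega\rest{[0,t]})} = \chi_{\rm Brown}(\rho)$ is essentially the content of Proposition \ref{prop:lyap}, once one checks that the a.s.\ limit there, a priori defined for $W = \int W_x\,dx$-a.e.\ path, also holds for $W_\star$-a.e.\ path: this follows because $W_\star$ is absolutely continuous with respect to $W$ restricted to paths issued from near $\star$ (more precisely, $W = \int W_x\,dx$ and the conditional law given $\omega(0)=x$ is $W_x$, and the limit being $\sigma_s$-invariant and $W$-a.s.\ constant, a standard disintegration/ergodicity argument transfers it to $W_x$ for a.e.\ $x$; then shifting the base point from a generic $x$ to $\star$ uses that changing the starting point of the path only prepends a bounded-holonomy piece, by \eqref{eq:lipschitz} and the fact that $\omega(0)$ and $\star$ are joined by a path of bounded length). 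Thus I would \emph{define} $\chi_{\rm Brown}(\rho)$ to be this common a.s.\ value of \eqref{eq:proplyap}, which is manifestly independent of $\omega$ and of the choice of normalized Lipschitz family by Proposition \ref{prop:lyap}.

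For the first equality, I would use the decomposition of $\widetilde\omega_t$ above together with Kingman-type control. Writing $L_t = \log\norm{h_\rho(\omega\rest{[0,t]})}$ and $R_t = \log\norm{\rho(\widetilde\omega_t)} = \log\norm{h_\rho(\delta_t)\circ h_\rho(\omega\rest{[0,t]})}$, submultiplicativity gives $R_t \le L_t + \log\norm{h_\rho(\delta_t)}$ and, applying the same to $h_\rho(\omega\rest{[0,t]}) = h_\rho(\delta_t)^{-1}\circ h_\rho(\widetilde\omega_t)$, also $L_t \le R_t + \log\norm{h_\rho(\delta_t)^{-1}}$, so $|R_t - L_t| \le 2C_\rho\, d_\hh(0,\widetilde\omega(t))$. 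Dividing by $t$ and using the drift estimate $d_\hh(0,\widetilde\omega(t))/t \to 1$, this only bounds $\tfrac1t|R_t - L_t|$ by $2C_\rho$, which is insufficient. The fix is to exploit freedom in the returning path: instead of a shortest geodesic from $\omega(t)$ to $\star$, one may equally append the time-reversal of $\omega\rest{[0,t]}$ itself composed with a very short loop, OR---cleaner---observe that the returning geodesic $\delta_t$ on $X$ has length $d_X(\omega(t),\star)$, and for a typical Brownian path $d_X(\omega(t),\star) = o(t)$. Indeed, on a finite-volume hyperbolic surface the Brownian motion is recurrent when $X$ is compact, and more generally $\liminf_t d_X(\omega(t),\star) < \infty$ a.s.; but we need control at \emph{every} large $t$, not just along a subsequence. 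Here is where I expect the main obstacle: one must show $\tfrac1t d_X(\omega(t),\star) \to 0$ $W_\star$-a.s., which is genuinely false pointwise in general---$d_X(\omega(t),\star)$ can be of order $t$ along the cusp excursions. The resolution is that it does \emph{not} matter: the returning geodesic $\delta_t$, when it runs up a cusp and back, lifts to a path in $\hh^2$ whose holonomy under $\rho$ is controlled not by its hyperbolic length but by the parabolic nature of $\rho$ near the cusp, so that $\log\norm{h_\rho(\delta_t)}$ is in fact $o(t)$ even when $\length(\delta_t)$ is not. Making this precise is exactly the kind of cusp estimate underlying Proposition \ref{prop:lipschitz} and Corollary \ref{cor:compar}; I would isolate it as a lemma stating that for a parabolic $\rho$ there is $\beta_\rho$ with $\log\norm{h_\rho(\delta_t)} \le \beta_\rho\bigl(d_\hh(0,\widetilde\omega(t)) - (\text{excursion depth into cusps}) + O(1)\bigr)$, combined with the fact that the Brownian particle's radial progress $d_\hh(0,\widetilde\omega(t)) \sim t$ is achieved transversally to the cusps (the deep cusp excursions of $\omega$ on $X$ correspond in $\hh^2$ to motion parallel to the horocycle, contributing to $d_\hh$ only logarithmically). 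Granting this cusp lemma, $\tfrac1t|R_t - L_t| \to 0$ $W_\star$-a.s., and combining with the second equality finishes the proof; the final sentence of the statement, that \eqref{eq:proplyap} also equals $\chi_{\rm Brown}(\rho)$, is then immediate since that was our definition of the constant.
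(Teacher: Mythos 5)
Your overall skeleton is the same as the paper's: transfer the $W$-a.e. limit of Proposition \ref{prop:lyap} to $W_\star$ via the Markov property (the paper does this by writing $h_\rho(\omega\rest{[0,t]})=h_\rho(\omega\rest{[1,t]})\circ h_\rho(\omega\rest{[0,1]})$ and using that the time-$1$ diffusion of $\delta_\star$ is absolutely continuous with respect to volume), and then control $\abs{\log\norm{\rho(\widetilde\omega_t)}-\log\norm{h_\rho(\omega\rest{[0,t]})}}$ by the Lipschitz bound $C_\rho\,\length(\delta_t)=C_\rho\, d_X(\omega(t),\star)$ on the returning geodesic. But at the decisive step there is a genuine gap, and in fact two errors that cancel your own diagnosis. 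First, you assert that $d_X(\omega(t),\star)=o(t)$ is ``genuinely false'' because of cusp excursions. It is true, and proving it is exactly the content of the paper's argument: a.e.\ Brownian path on $\hh^2$ converges to a boundary point and is shadowed by the geodesic ray toward that point with error $o(t)$ (Ancona), the distribution of the limit direction is absolutely continuous on $S_\star X$, and Sullivan's logarithm law gives $d_X(\star,\gamma_v(t))=O(\log t)$ for Lebesgue-a.e.\ $v$; combining, $d_X(\omega(t),\star)=o(t)$ for $W_\star$-a.e.\ $\omega$. (Heuristically: the depth coordinate in a cusp carries a unit outward drift, so reaching depth $\e t$ by time $t$ has probability exponentially small in $t$.) Note also that early on you conflate $\length(\delta_t)=d_X(\omega(t),\star)$ with $d_\hh(0,\widetilde\omega(t))$; the latter is $\sim t$ by the drift, but it is irrelevant, since $\delta_t$ is a shortest path on the quotient surface $X$, not in $\hh^2$.

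Second, the substitute ``cusp lemma'' you propose is not correct, so it cannot repair the step even if the premise had failed. For a parabolic representation, the Lipschitz family of metrics near a cusp is built from an equivariant isometric immersion of the horoball into $\hh^3$ (Proposition \ref{prop:lipschitz}); along a \emph{radial} descent of depth $D$ into the cusp the ratio of the spherical metrics at the two endpoints is of order $e^{D}$, so $\log\norm{h_\rho(\delta_t)}$ is genuinely comparable to $\length(\delta_t)$, with no gain from parabolicity. Parabolicity helps for paths that wind many times around the cusp (that is the point of the remark following Proposition \ref{prop:lipschitz}), not for a shortest geodesic running straight down the cusp, which is what $\delta_t$ is. So if $d_X(\omega(t),\star)$ really were of order $t$ on a non-negligible set of times, the difference $\tfrac1t\abs{\log\norm{\rho(\widetilde\omega_t)}-\log\norm{h_\rho(\omega\rest{[0,t]})}}$ would not tend to $0$ by your route. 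The correct (and needed) ingredient is precisely the sublinearity $d_X(\omega(t),\star)=o(t)$, obtained from geodesic shadowing plus Sullivan's law as above.
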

 
 \begin{proof}
 Let us first show that for $W_\star$ a.e. $\omega$, $\lim_{t\cv\infty} \unsur{t}\log \norm{h_\rho\big(\omega\rest{[0, t]}\big)}$ exists and has the 
 same value as the one in \eqref{eq:proplyap}. Indeed the diffusion of the Dirac mass $\delta_\star$ at time 1 is absolutely continuous 
 with respect to volume measure on $X$. Now we simply write $h_\rho\big(\omega\rest{[0,t]}\big) = h_\rho\big(\omega\rest{[1,t]}\big)\circ h_\rho 
 \big(\omega\rest{[0,1]}\big)$  and by the Markov property of Brownian motion together with Proposition \ref{prop:lyap}, we get that 
  for $W_\star$ a.e. $\omega$, $\unsur{t}\log \norm{\omega\rest{[1, t]}}$ converges to the limit in \eqref{eq:proplyap}.
 
 \medskip
 
To finish the proof, we show that for   $W_\star$ a.e. $\omega$, 
\begin{equation}\label{eq:omega tilde}
 \unsur{t} \abs{\log \norm{\rho(\widetilde \omega_t)} - \log \norm{h_\rho\big(\omega\rest{[0, t]}\big)}} \underset{t\cv\infty}
\longrightarrow 0.
\end{equation}
If $X$ is compact this is obvious since $d_X(\omega(t), \star)$ is uniformly bounded. In 
the general case, by the Lipschitz property we have to show that for $W_\star$ a.e. $\omega$, $d_X(\omega(t), 
\star) = o(t)$. A way to see this is to use the following two facts: first, a 
theorem of Sullivan \cite{sullivan disjoint spheres} asserts that for almost every $v$ in the unit tangent 
space $S_\star X$ to $\star$, the unit speed   geodesic ray $\gamma(t)$ issued from $\star$ pointing in the direction 
$v$ satisfies 
$d_X(\star, \gamma(t)) = O(\log t)$. The other  result we use is the  classical fact that 
Brownian paths on hyperbolic space  are shadowed by   geodesic rays (see e.g. \cite{ancona}). More precisely, if $\omega$ is 
a Brownian path  starting from $0 \in \hh^2$, then $\lim_{t\cv\infty} \omega(t) = \omega_\infty$ exists in $\fr\hh^2$, and 
if $\gamma_{x,\omega_\infty}$ denotes the unit speed   geodesic ray joining $x$ and $\omega_\infty$, then
 for $W_0$ a.e. $\omega$, 
$d_{\hh}(\omega(t), \gamma_{x,\omega_\infty} (  t ) = o(t)$ (this is actually $O(\log t)$ 
by a theorem of Ancona \cite{ancona}).  
Note that the distribution of $\omega_\infty$ is absolutely continuous on the circle at infinity (with density given by the 
Poisson kernel). 
Therefore on $X$, for $W_\star$ a.e. $\omega$, there exists a unit tangent vector such that $d_X\lrpar{\omega(t), 
\gamma_{\star,v}(  t) }= o(t)$. Since the distribution of the tangent vector $v$ is induced by that of $\omega_
\infty$, it is absolutely continuous with respect to Lebesgue measure on $S_\star X$. So we can  combine this estimate with Sullivan's, and conclude that $d_X(\omega(t), \star) = o(t)$ a.s., 
 which implies \eqref{eq:omega tilde}. 
\end{proof}

For further reference, we now make the easy observation that the natural Lyapunov exponent is insensitive to finite coverings. 

\begin{prop}\label{prop:covering}
Let $X$ be a hyperbolic Riemann surface of finite type, and  $\pi: (Y,  y_0)\cv(X, \star)$ be a finite covering. Any 
  parabolic representation   $\rho : \pi_1(X, \star) \cv\PSL$ induces by restriction a parabolic   representation $\rho_Y$
  of $\pi_1(Y, y_0)$. Then, denoting by  $\chi_{\rm Brown}(\rho_Y, Y)$ the Lyapunov exponent of $\rho_Y$ associated to the Riemann surface structure on $Y$, we have that $\chi_{\rm Brown}(\rho,X)=\chi_{\rm Brown}(\rho_Y, Y)$.
\end{prop}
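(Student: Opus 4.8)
The plan is to exploit the fact that $\chi_{\rm Brown}$ can be computed using a generic geodesic ray issued from the basepoint (as noted after Proposition~\ref{propo:A}, $\chi_{\rm geodesic} = \chi_{\rm Brown}$), together with the compatibility of geodesic flows under finite coverings. First I would recall that a finite covering $\pi:(Y,y_0)\to(X,\star)$ is a local isometry for the hyperbolic metrics, hence it lifts to an isometry of universal covers $\widetilde Y \to \widetilde X$, both identified with $\hh^2$, sending $\Gamma_Y = \rho_{\rm can,Y}(\pi_1(Y,y_0))$ to a finite-index subgroup of $\Gamma_X = \rho_{\rm can,X}(\pi_1(X,\star))$. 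Under this identification, $\pi_1(Y,y_0)$ is simply a finite-index subgroup of $\pi_1(X,\star)$, the representation $\rho_Y$ is literally the restriction of $\rho$, and the loop-holonomy construction on $Y$ coincides with that on $X$ after projecting.

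The key observation is that Brownian motion on $Y$ projects to Brownian motion on $X$, since $\pi$ is a local isometry and hence intertwines the two Laplacians. Equivalently, working upstairs: the Wiener measure $W_{0}$ on $\Omega_0(\hh^2)$ is the same object whether we view $\hh^2$ as the universal cover of $X$ or of $Y$. So for $W_{y_0}$-a.e. Brownian path $\omega$ on $Y$, its lift $\widetilde\omega$ to $\hh^2$ is a Brownian path to which both Proposition~\ref{prop:lyap2} (applied on $X$) and its analogue on $Y$ apply. Concretely I would run the following comparison: let $\widetilde\omega$ be the lift of $\omega$ starting in a fixed fundamental domain for $\Gamma_Y$; the holonomy $\rho_Y(\widetilde\omega_t^Y)$ computed on $Y$ and the holonomy $\rho(\widetilde\omega_t^X)$ computed on $X$ differ only through the choice of the returning geodesic segment (joining $\widetilde\omega(t)$ back to the relevant fundamental domain / basepoint), and both are, up to a bounded additive error governed by $d_\hh(0,\widetilde\omega(t))$ modulo the respective lattice, equal to $\log\norm{h_\rho(\omega\rest{[0,t]})}$ where $h_\rho$ is holonomy of the flat bundle. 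But the flat $\pu$-bundle $M_{\rho_Y}\to Y$ is exactly the pullback $\pi^*M_\rho$, and holonomy along a path in $Y$ equals holonomy of its projection in $X$; with a normalized Lipschitz family of spherical metrics on $M_\rho$ pulled back to $M_{\rho_Y}$ (which is again normalized and Lipschitz, with the same constant since $\pi$ is a local isometry), we get $\norm{h_{\rho_Y}(\omega\rest{[0,t]})} = \norm{h_\rho(\pi\circ\omega\rest{[0,t]})}$ exactly. Dividing by $t$ and letting $t\to\infty$, Proposition~\ref{prop:lyap2} on both sides yields $\chi_{\rm Brown}(\rho_Y,Y) = \chi_{\rm Brown}(\rho,X)$.

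The only mild subtlety, which is where I would be most careful, is matching the {\em loop}-based definition (Definition~\ref{def:lyap}) rather than the holonomy-based one: a Brownian path $\omega$ on $Y$ starting at $y_0$, projected to $X$, need not return to $\star$ at the same times, and the shortest returning geodesics on $Y$ and on $X$ are genuinely different. However Proposition~\ref{prop:lyap2} already shows on each surface separately that the loop-holonomy $\norm{\rho(\widetilde\omega_t)}$ and the bundle-holonomy $\norm{h_\rho(\omega\rest{[0,t]})}$ have the same exponential growth rate (the error being $o(t)$, controlled via $d_X(\omega(t),\star) = o(t)$). So it suffices to compare the bundle-holonomies, where the pullback identification is exact, and the loop-based statement follows. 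I do not expect any serious obstacle here; the proposition is essentially a bookkeeping exercise once one records that Brownian motion, the flat bundle, and the Lipschitz metric all pull back cleanly under a finite local isometry.
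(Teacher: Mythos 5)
Your argument is correct and is essentially the paper's own proof: identify the Wiener measures on $X$ and $Y$ with $W_0$ on $\hh^2$, note that the suspension $M_{\rho_Y}$ covers (pulls back) $M_\rho$ with a pulled-back normalized Lipschitz family of metrics, so that $\norm{h_{\rho_Y}(\omega\rest{[0,t]})}=\norm{h_\rho(\pi\circ\omega\rest{[0,t]})}$, and conclude via Proposition \ref{prop:lyap2}. The opening appeal to geodesic rays is unnecessary but harmless, and your handling of the loop-versus-bundle-holonomy comparison matches what the paper already delegates to Proposition \ref{prop:lyap2}.
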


\begin{proof}
This is just a matter of following step by step the proof of the existence  of $\chi_{\rm Brown}(\rho, X)$. Indeed, we observe that 
the suspension $Y_{\rho_Y}$ is naturally a covering of $M_\rho$, and the lift of a Lipschitz metric $\norm{\cdot}$ 
on the fibers of $M_\rho$ is a Lipschitz metric  on the fibers of $Y_{\rho_Y}$. As already observed, the Wiener spaces 
$W_\star$ on $X$  and $W_{y_0}$  are naturally identified to  $W_0$ in $\hh^2$. Now by construction, 
if $\omega$ is a Brownian path issued from $y_0$ in $Y$, we have that $\norm{h_{\rho_Y}(\omega\rest{[0,t]})} = 
\norm{h_\rho(\pi(\omega)\rest{[0,t]})}$. The result follows. 
\end{proof}

\medskip

Let now $(\rho_\la)_{\la\in \La}$ be a holomorphic  family of parabolic  representations of $G$. Thanks to
  Definition \ref{def:lyap} we can define a Lyapunov exponent function $\chi_{\rm Brown}$ on $\La$, 
and we put $\tbif = dd^c\chi_{\rm Brown}$. We will refer to it as the \textit{natural bifurcation current} of the family.  The following result asserts that $\tbif$ is indeed a bifurcation current. 
 
 \begin{thm}\label{thm:support brownien}
 Let $X$ be a hyperbolic Riemann surface of finite type, and 
   $(\rho_\la)_{\la\in \La}$ be a holomorphic family of representations of $G=\pi_1(X)$,  satisfying (R1, R2, R3'). Then $\chi_{\rm Brown}$ is a nonnegative  psh function on $\La$, and the positive closed current $\tbif = dd^c\chi_{\rm Brown}$ satisfies $\supp(\tbif) = \bif$.
   
 If furthermore (R3) holds then $\chi_{\rm Brown}$  is positive and locally H\"older continuous.
    \end{thm}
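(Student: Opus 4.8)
The strategy is to transfer the statement, via a discretization of Brownian motion, to the theory of random products of matrices treated in \cite{kleinbif}, and then to invoke Theorem \ref{thm:support}.

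\textbf{Step 1: discretization and comparison of Lyapunov exponents.} Following Furstenberg \cite{furstenberg discretization} and Lyons--Sullivan \cite{lyons sullivan} (to be recalled in detail in \S\ref{subs:discretization}), I would fix a discretization scheme producing, from the Wiener measure $W_\star$ on $X\simeq\Gamma\backslash\hh^2$, a probability measure $\mu$ on $G\simeq\Gamma$ whose support generates $G$ as a semigroup: a $W_\star$-generic Brownian path determines a sequence of stopping points whose successive $\Gamma$-coordinates $g_1,g_2,\dots$ are distributed like the increments of a $\mu$-random walk. Comparing, along the Brownian path and between two consecutive stopping times, the holonomy $h_\rho$ of the flat bundle $M_\rho$ with $\rho(g_k)$ by means of the Lipschitz family of spherical metrics of Proposition \ref{prop:lipschitz}, and using Proposition \ref{prop:lyap2} together with the fact that the Brownian motion on $\hh^2$ has linear drift $1$, one gets a universal constant $\kappa>0$ (independent of $\la$, essentially the reciprocal of the mean $\Gamma$-displacement per discretization step) such that $\chi_{\rm Brown}(\rho_\la)=\kappa\,\chi_\mu(\rho_\la)$ for every $\la$. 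In particular $\tbif=\kappa\,T_{{\rm bif},\mu}$.

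\textbf{Step 2: psh and nonnegativity.} By Step 1 it is enough to argue on the random-walk side. Since every element of $\PSL$ has operator norm $\geq 1$, the psh functions $\la\mapsto\frac1n\int_G\log\norm{\rho_\la(g)}\,d\mu^n(g)$ are $\geq 0$, and by submultiplicativity of the norm they converge, along $n=2^k$, decreasingly to $\chi_\mu$; hence $\chi_\mu$, and so $\chi_{\rm Brown}=\kappa\chi_\mu$, is a decreasing limit of psh functions which is not identically $-\infty$ (it is $\geq 0$, and it is finite by Step 3), therefore psh and nonnegative.

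\textbf{Step 3: the moment condition.} To apply Theorem \ref{thm:support} I must verify that $\mu$ satisfies the locally uniform exponential moment (M). By \eqref{eq:norm distance} this amounts to $\int_G\exp\lrpar{\beta\,d_\hh(0,\rho_\la(g)0)}\,d\mu(g)<\infty$ for some $\beta>0$, locally uniformly in $\la$; and by Corollary \ref{cor:compar}, whose exponent $\beta_\rho$ is locally uniformly bounded on $\hompar$ (Proposition \ref{prop:lipschitz}), it suffices to prove this for the uniformizing representation $\rho_{\rm can}$, where up to constants the integrand becomes $\exp$ of (a multiple of) the $\Gamma$-displacement recorded at a discretization step. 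When $X$ is compact, the discretization stopping time together with the super-exponential decay \eqref{eq:davies} of the heat kernel on $\hh^2$ gives this moment by a routine estimate (\S\ref{subs:compact}). \emph{This compactness-free estimate (Proposition \ref{prop:moment}) is the main obstacle:} in the finite-volume case the injectivity radius of $X$ tends to $0$ in the cusps, so a Brownian excursion of bounded length can wind many times around a cusp and record a large $\Gamma$-displacement; one must analyse the discretization separately in the compact part of $X$ and in horoball neighbourhoods of the cusps, and bound the winding during cusp excursions, combining the parabolicity of $\rho$ with \eqref{eq:davies}. This occupies most of \S\ref{subs:proof}.

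\textbf{Step 4: conclusion.} Granting (M), Theorem \ref{thm:support} gives $\supp(T_{{\rm bif},\mu})=\bif$, hence $\supp(\tbif)=\bif$. Finally assume (R3). Then every $\rho_\la$ is non-elementary, so the subgroup generated by $\supp(\mu)$ is non-elementary, hence strongly irreducible and proximal; by Furstenberg's positivity theorem (valid under (M)), $\chi_\mu(\rho_\la)>0$, and therefore $\chi_{\rm Brown}(\rho_\la)>0$. Local H\"older continuity of $\chi_\mu$, hence of $\chi_{\rm Brown}$, follows from Le Page's regularity theorem for Lyapunov exponents of random matrix products, which applies thanks to the locally uniform exponential moment (M) and to the fact that (R3) keeps the whole family within the region where strong irreducibility and proximality hold locally uniformly --- this is precisely why (R3), and not merely (R3$'$), is assumed for the last assertion.
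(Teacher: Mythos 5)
Your overall architecture is the paper's: the Furstenberg--Lyons--Sullivan discretization producing a measure $\mu$ on $G$ with $\supp\mu=G$, the proportionality between $\chi_\mu$ and $\chi_{\rm Brown}$ (the paper proves $\chi_\mu=\tau\,\chi_{\rm Brown}$ with $\tau=\lim T_n/n$, obtained from ergodicity of the shift induced by the stopping time and Birkhoff's theorem; this already uses the integrability of $T$, which you take for granted), the verification of the moment condition (M), then Theorem \ref{thm:support}, and Furstenberg and Le Page for positivity and local H\"older continuity under (R3). Steps 1, 2 and 4 are in line with the paper.

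The genuine gap is in Step 3, which is precisely where the real content of the theorem lies: you leave Proposition \ref{prop:moment} unproved in the finite-volume case, and the strategy you sketch for it is misdirected. Once you have reduced, via Corollary \ref{cor:compar}, to an exponential moment for $\rho_{\rm can}$, i.e.\ for the hyperbolic displacement $d_\hh(0,\gamma\,0)$ recorded at a discretization step, cusp winding is no longer the relevant difficulty: winding produces large \emph{word length}, not large displacement, and by Remark \ref{rmk:moment} the measure $\mu$ does not even have a finite first moment in word length, so any attempt to ``bound the winding during cusp excursions'' cannot succeed and is in any case unnecessary; moreover the parabolicity of $\rho$ has already been entirely consumed in Corollary \ref{cor:compar} and plays no further role once you work with $\rho_{\rm can}$. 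What must actually be controlled is the discretization stopping time $T$ itself: a Brownian path issued from $S_R$ may make a long excursion deep into a cusp before returning to $\Gamma\cdot S_r$, and one needs $\sup_{x\in S_R}\ee_x\lrpar{e^{\alpha T}}<\infty$. The paper obtains this from the spectral gap of the Laplacian on the finite-volume surface via the Carmona--Klein theorem on exponential moments of hitting times of a compact set (Proposition \ref{prop:franchi}), propagated through the sub-balayage construction by conditioning on the number of attempts (Proposition \ref{prop:exponential moment stopping time}); the displacement at time $T$ is then bounded in terms of $T$ by the Doob submartingale estimate of Lemma \ref{lem:martingale}, yielding exponential decay of $\mu\lrpar{\set{\gamma:\ d_\hh(0,\gamma\,0)\geq\delta}}$ and hence (M), locally uniformly in $\la$. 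Without an argument of this kind your Step 3 is unsupported, and with it both the application of Theorem \ref{thm:support} and the use of Le Page's theorem in Step 4.
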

 
The proof consists in 
exhibiting a measure $\mu$ on $G$ satisfying $(M)$ and such that 
$T_{\rm bif, \mu} = \tbif$ (up to some multiplicative constant). It  then suffices to apply Theorem \ref{thm:support}. 
It  will be carried out in the next subsections. 
  
 \begin{rmk}\label{rmk:clock}
We have constructed the natural bifurcation current  by using the hyperbolic metric on  $X$. Let us  observe 
 that when $X$ is compact any other conformal metric would essentially yield the same object. Indeed, choosing a different conformal metric  on $X$ only changes the ``clock'' of Brownian motion, that is, sample Brownian paths for the new metric   
are time-reparameterized Brownian paths relative to the original one. It then follows from 
 an easy ergodicity argument  that the associated Lyapunov exponent function is a constant multiple of $\chi_{\rm Brown}$.  When $X$ has punctures, the same statement holds, provided one restricts to an appropriate class of conformal metrics, for instance the ones that coincide with the Poincar\'e metric outside some compact set of $X$. 
\end{rmk}

\subsection{Discretization of the Brownian motion}\label{subs:discretization}
We will rely on a discretization procedure for the Brownian motion which  was introduced by Furstenberg \cite{furstenberg 
discretization}, and subsequently studied by several authors, in particular Lyons and Sullivan \cite{lyons sullivan} 
(see also Kaimanovich \cite{kaimanovich discretization entropy}, Ballman-Ledrappier \cite{ballmann ledrappier}, Ancona \cite{ancona}).  Since we will need precise moment estimates, 
we include a detailed treatment of the discretization, which  is close (but not identical to) that of Lyons and Sullivan.

\medskip

Let $\Gamma$ be a finite covolume lattice in $\text{PSL}(2,\mathbb R)$, and $X = \Gamma\setminus\mathbb{H}$. Let $0\in \mathbb H$ and $R>0$ such that the balls $ B (\gamma 0, R)$ are disjoint, for $\gamma \in \Gamma$. 
Since the Brownian motion on $X$ is recurrent \cite{grigoryan},
for $0<r\leq R$ the set $\partial _r = \bigcup _{\gamma} \partial B(\gamma 0, r)$ is recurrent, namely a.e. Brownian path starting at a point $x
$ hits $\partial _r$ in finite time. We denote by $T_r$ (resp. $T_R$) the first hitting time to $\partial _r$ (resp. $\fr_R$). Since $\fr_r$ (resp.  $\fr_R$) 
is closed, $T_r$ (resp. $T_R$) is a stopping time.

\subsubsection{Sub-balayage}
Recall that the balayage of a finite measure $m$ on a closed 
recurrent set $E$ is defined to be  the image of the measure $\int W_x dm(x)$ under the map $\omega \mapsto \omega (T_E)$ where $T=T_E(\omega)$ is the first moment where $\omega$ hits $E$.

We denote by $B_\rho$, $S_{\rho}$ respectively the ball and   sphere centered at $0$ and of radius $\rho$. If $y$ is a point interior to $B_R$, the balayage of the Dirac mass at the point $x$ on $S_R$ is a probability measure absolutely continuous with respect to the spherical measure $ds$ on $S_R$ (conveniently normalized so that $ds $ is a probability measure), the density being given by the Poisson kernel. Hence, there is a constant $0 < p < 1$, such that if $x$ is chosen in the sphere $S_r$, its balayage on $S_R$ is bounded from below by $p ds$. 

The following lemma will allow us to select a ``good" set of paths from $S_r$ to $S_R$. 
For every $x\in S_r$, a.e. Brownian path issued from $x$ intersects $S_R$.  The hitting time is 
 $T_R(\omega)$    and we denote by   $\mathrm{e}(\omega) = \omega(T_R(\omega))$  the exit point. We use the standard notation $(\mathcal{F}_t)_{t\geq 0}$ for the Brownian filtration.

\begin{lem}\label{lem:barrier}
For every $x\in S_r$ there exists a $\mathcal{F}_{T_R}$-measurable set $G_x$, with $\pp(G_x) =p$ 
such that  $ \mathrm{e}_*( 1_{G_x}W_x)  =pds$.  

In addition, reducing $p$ slightly if necessary,  $G_x$ can be chosen so that there exists a constant $M$
such that for every $x\in S_r$ and  $\omega\in G_x$, $T_R(\omega)\leq M$.
\end{lem}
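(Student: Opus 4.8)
The plan is to build $G_x$ in two stages, first getting the exact identity $\mathrm{e}_*(1_{G_x}W_x)=p\,ds$ and only afterwards trimming it to enforce the uniform time bound $T_R\le M$. For the first stage, recall that the exit distribution $\mathrm{e}_*W_x$ on $S_R$ (the harmonic measure of $B_R$ seen from $x$) is $\phi_x\,ds$ where $\phi_x$ is the Poisson kernel of the ball $B_R$ evaluated at the interior point $x\in S_r$. Since $r\le R$ and $x$ ranges over the compact sphere $S_r$ which sits strictly inside $B_R$, Harnack's inequality (or an explicit formula for the ball Poisson kernel) gives a constant $p\in(0,1)$, uniform in $x\in S_r$, with $\phi_x\ge p$ everywhere on $S_R$. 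Now I want to extract from $W_x$ a subset of paths carrying exactly the mass $p\,ds$ and realizing $e$ with that law. Write $\mathrm{e}_*W_x=\phi_x\,ds = p\,ds + (\phi_x-p)\,ds$; the idea is to keep each path $\omega$ with a probability that depends only on its exit point $\mathrm{e}(\omega)$, namely keep it with probability $p/\phi_x(\mathrm{e}(\omega))\in(0,1]$. Concretely, on an enlarged probability space, let $U$ be an independent uniform$[0,1]$ variable and set $G_x=\{\,U\le p/\phi_x(\mathrm{e}(\omega))\,\}$. Then $\pp(G_x)=\int_{S_R}(p/\phi_x)\,\phi_x\,ds=p$, and for any Borel $A\subset S_R$, $\mathrm{e}_*(1_{G_x}W_x)(A)=\int_A (p/\phi_x)\,\phi_x\,ds=p\,ds(A)$, as desired. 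To phrase this purely within the Brownian filtration without an auxiliary variable, one can instead use the standard fact that $(\Omega_x,\mathcal{F}_{T_R},W_x)$ is a non-atomic standard probability space, so a measurable selection with the prescribed conditional law exists directly; either formulation yields a $\mathcal{F}_{T_R}$-measurable $G_x$.

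For the second stage I incorporate the time bound. Fix $M>0$ large, to be chosen, and intersect the event above with $\{T_R\le M\}$: set $G_x' = G_x\cap\{T_R(\omega)\le M\}$. The point is that $\pp(T_R>M)$ can be made small uniformly in $x\in S_r$: since $\fr_R$ is a recurrent set and the starting points lie in the compact set $S_r$, by a standard comparison argument (e.g. comparing with the exit time of a fixed larger ball in $\hh$, or using that $\sup_{x\in S_r}\ee_x(T_R)<\infty$ together with Markov's inequality) there is $M$ with $\sup_{x\in S_r}\pp_x(T_R>M)<\eta$ for any prescribed $\eta>0$. However $G_x'$ no longer pushes forward to an absolutely continuous multiple of $ds$ in general — removing the long paths distorts the exit law. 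So instead of this crude intersection I re-run the selection argument of stage one with the conditional exit law: let $\psi_x$ be the density on $S_R$ of $\mathrm{e}_*(1_{\{T_R\le M\}}W_x)$, which for $M$ large satisfies $\psi_x\ge \phi_x-\eta'\ge p' :=p-\eta'$ on $S_R$ (here one uses that $\mathrm{e}_*(1_{\{T_R>M\}}W_x)$ has total mass $<\eta$, hence in particular can be bounded; a little care is needed because this controls total mass, not the sup of a density, so one should phrase the selection measure-theoretically rather than pointwise). Then apply the same keep-with-probability-$p'/\psi_x$ construction inside $\{T_R\le M\}$ to obtain $G_x\subset\{T_R\le M\}$, $\mathcal{F}_{T_R}$-measurable, with $\pp(G_x)=p'$ and $\mathrm{e}_*(1_{G_x}W_x)=p'\,ds$; renaming $p'$ as $p$ (this is the ``reducing $p$ slightly'' in the statement) finishes the proof.

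The main obstacle is the last point: the honest way to say ``$\mathrm{e}_*(1_{\{T_R\le M\}}W_x)$ is close to $\mathrm{e}_*W_x$'' is in total variation of measures on $S_R$, and that does \emph{not} by itself give a pointwise lower bound $\psi_x\ge p-\eta'$ for the density. The clean fix is to never work with densities on $S_R$ at all: the selection lemma I used in stage one works verbatim for any finite measure $\nu\le \mathrm{e}_*W_x$ with $\nu$ absolutely continuous w.r.t.\ $\mathrm{e}_*W_x$ and Radon--Nikodym derivative $\le 1$, producing a $\mathcal{F}_{T_R}$-measurable $G$ with $\mathrm{e}_*(1_G W_x)=\nu$ and $\pp(G)=\nu(\Omega)$. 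Taking $\nu = p\,ds$ and noting $p\,ds \le \mathrm{e}_*(1_{\{T_R\le M\}}W_x)$ \emph{as measures} once $M$ is large enough (because $\mathrm{e}_*(1_{\{T_R\le M\}}W_x)\ge \mathrm{e}_*W_x - \eta\cdot(\text{something})\ge (p-C\eta)\,ds$ holds measure-theoretically after absorbing constants, and then we decrease $p$), we get $G_x$ inside $\{T_R\le M\}$ directly. I would also double check the uniformity of all constants ($p$, $M$, and the comparison constants) over $x\in S_r$, which follows from compactness of $S_r$ and continuity of the relevant harmonic functions and exit-time expectations; this is routine but should be stated.
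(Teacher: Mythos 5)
Your first stage is essentially the paper's construction in disguise: the paper realizes the ``keep each path with conditional probability $p/b_x(\mathrm{e}(\omega))$'' thinning intrinsically, via a continuously opening barrier $C_t$ (an arc of the circle of radius $(R+r)/2$), setting $E_t=\{\omega$ avoids $C_t$ before hitting $S_R\}$, checking that $t\mapsto W_x(E_t\mid \mathrm{e}^{-1}(y))$ is continuous and increasing from $0$ to $1$, and taking $G_x=\{\omega:\ \omega\rest{[0,T_R]}\cap C_{t(\mathrm{e}(\omega))}=\emptyset\}$ with $t(y)$ the level at which the conditional probability equals $p/b_x(y)$. This is exactly the measurable selection you invoke abstractly; note that the auxiliary-uniform-variable version by itself is not enough (the set must be $\mathcal{F}_{T_R}$-measurable inside the path space, since later $T$ has to be a stopping time), and the relevant condition is atomlessness of the conditional laws given $\mathrm{e}$ inside $\mathcal{F}_{T_R}$, not mere non-atomicity of $(\Omega_x,\mathcal F_{T_R},W_x)$; you flag this and the fix is standard, so I regard that part as correct though less explicit than the paper.

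The genuine gap is in your second stage. The statement you ultimately need is a \emph{pointwise} lower bound, uniform in $x\in S_r$: for every $p_1<p$ there is $M$ such that $\mathrm{e}_*\bigl(1_{\{T_R\le M\}}W_x\bigr)\ge p_1\,ds$ as measures on $S_R$. You correctly observe that smallness of $W_x(T_R>M)$ only controls the total mass of the discarded measure $\mathrm{e}_*\bigl(1_{\{T_R>M\}}W_x\bigr)$, not its density -- but then your ``clean fix'' simply asserts $\mathrm{e}_*\bigl(1_{\{T_R\le M\}}W_x\bigr)\ge(p-C\eta)\,ds$ ``after absorbing constants'', which is precisely the unproved density bound; as written this is circular, since a measure of small total mass can still dominate $\epsilon\,ds$ nowhere on a set where it has huge density. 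The paper closes this by a monotonicity argument: the densities of $\mathrm{e}_*\bigl(1_{\{T_R\le M\}}W_x\bigr)$ with respect to $ds$ increase in $M$ to the Poisson density $b_x(y)>p$, are continuous in $(x,y)\in S_r\times S_R$, and hence converge uniformly by Dini's theorem, giving the uniform bound $\ge p_1\,ds$ for $M$ large; one then reruns the stage-one selection with $1_{\{T_R\le M\}}W_x$ in place of $W_x$, which is what you intended. With that ingredient (or any substitute argument actually producing the uniform density bound) your plan goes through; without it, the second claim of the lemma is not established.
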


\begin{proof} 
Let us first establish  the lemma without the bound $M$ on the exit time. 
Denote by  $b_x(y)ds(y)$   the balayage measure of $\delta_x$ on $S_R$. Notice that $b_x(y)> p$ for every $y\in S_R$.  
For $0\leq t\leq 1$ consider the piece of circle
 $$C_t = \set{\frac{R+r}{2} e^{i2\pi \theta}, 0\leq \theta\leq 1-t} .$$
We use $C_t$ as a ``continuously opening  barrier" for Brownian paths going from $x$ to $S_R$.  For $0\leq t\leq 1$, let $E_t\subset \om_x$ be the set of paths starting at $x$   that avoid $C_t$ before hitting $S_R$. For $t=0$ the barrier is closed so $E_t=\emptyset$ while for $t=1$ it is open and $W_x(E_t)=1$; observe also that for $t<t'$, $C_t\supset C_{t'}$ so $E_t\subset E_{t'}$. 
Therefore,
 for every $y\in S_R$, $f_y: t\mapsto W_x(E_t\vert \mathrm e^{-1}(y))$ is a continuous   increasing function with $f_y(0)=0$ and $f_y(1)=1$. The continuity of $f_y$ follows from the fact that   the set of Brownian paths (starting from $x$ and exiting at $y$),   intersecting 
 $E_{t'}\setminus E_t$ has small measure when $t'-t$ is small. 
 Let $t(y)\in (0,1)$ be the unique value of $t$ such that $f_y(t(y)) = \frac{p}{b_x(y)}$.  We can now define
  $$G_x = \set{\omega\in \om_x, \ \omega\rest{[0, T_R(\omega)]}\cap C_{t(\mathrm{e}(\omega))} =\emptyset}.$$ Clearly, this set is $\mathcal{F}_{T_R}$ measurable (recall that $\mathrm{e}(\omega) = \omega(T_R(\omega))$). 
  Since by definition 
  $G_x\cap \mathrm e^{-1}(y) = E_{t(y)} \cap \mathrm e^{-1}(y)$ we infer that 
  \begin{align*}
 \mathrm e_*(1_{G_x}W_x) &=\mathrm e_* \int W_x(G_x\vert \mathrm{e}^{-1}(\mathrm e(\omega))) dW_x(\omega)
   =\mathrm e_* \int W_x(E_{t(\mathrm e(\omega)))}\vert \mathrm{e}^{-1}(\mathrm e(\omega))) dW_x(\omega)\\
&=   W_x(E_{t(y)}\vert \mathrm{e}^{-1}(y)) \; \mathrm e_*W_x 
   =W_x(E_{t(y)}\vert \mathrm{e}^{-1}(y)) b_x(y)ds(y)
   = pds,
 \end{align*}  
   which was the desired property. 
   
\medskip

To prove the second claim, observe  that for every $p_1<p$, if 
$A_x(M)\subset W_x$ denotes the set of paths such that $T_R(\omega)\leq M $, then for $M$ 
sufficiently large, $\mathrm{e}_*(1_{A_x(\tau_1)} W_x)\geq p_1ds$, uniformly in $x$ --this is a 
consequence of Dini's Theorem. From this point, to get the result it suffices 
to repeat the above proof  with 
$1_{A_x(\tau_1)} W_x$ instead of $W_x$.   
   \end{proof}

We extend by equivariance the definition of the sets $G_x$ to all of $\partial _r$ by setting $G_{\gamma x} = \gamma G_x$.

\subsubsection{A stopping time}\label{sss:stopping time}
For every point $x_0 \in \mathbb H$ and every $\omega\in \om_{x_0}$, 
we define a sequence of stopping times 
$$U_0 =T_R(\omega)< u_1= u_1(\omega)< U_1 = U_1(\omega)< \ldots < u_n(\omega) < U_n (\omega) <\ldots $$ 
as follows:   $U_0=T_R(\omega)$, and by induction, $u_n$ is the first time after $U_{n-1}$ at which $\omega$ hits $\partial _r$, that is $u_n(\omega) = U_{n-1} (\omega) + T_r(\sigma_{U_{n-1}}(\omega))$, where as before $\sigma_t$ denotes time shift by $t$. Likewise, 
 $U_n$  is the first time after $u_n$ at which $\omega$ hits $\partial_R$, i.e. $U_n (\omega) = u_n(\omega) + T_R (\sigma_{u_{n}}(\omega)) $. 
 
By the strong Markov property of Brownian motion, almost surely there exists an integer $n$ such that $\sigma_{u_n} \omega$ belongs to $G_{\omega(u_n)}$. We define $T (\omega) := U_k (\omega)$, where $k=k(\omega)$ is the first such integer. The measurability property of the sets 
$G_x$  ensures that   $T$ is a stopping time (intuitively at time $t$ we know if we already went from 
$\fr_r$ to $\fr_R$ through some $G_x$). We   let $x_1(\omega) =\omega(T(\omega))$ and $r_1 (\omega)$ be the element of $\Gamma$ such that $x_1 \in \partial B(r_1\cdot 0,R)$. The law of $x_1$ defines a transition probability $\Pi(x_0, \cdot)$.  By construction it satisfies  the equivariance property $\Pi(\gamma x_0, \cdot) =\gamma_*\Pi(x_0, \cdot)$ for $\gamma\in \Gamma$. 

Repeating this process independently  gives rise to sequences $(x_n)_{n\geq 0}\in (\mathbb H^2)^\nn$ and $(r_n)_{n\geq 1}\in \Gamma^\nn$.  
 
 \subsubsection{The discretization measure} 
 It follows from the construction that 
 whatever the distribution of the point $x_0$ is, the distribution of the point $x_1$ is a convex 
 combination of the spherical measures $ds_{\gamma} = \gamma_* ds$ on $\partial _R$. Hence, if 
 $x_0$ has distribution $ds$ on $S_R$ (observe that $U_0=0$ in this case), then $x_1$ has 
 distribution $\mu \ast ds$ for a certain measure $\mu$ on $\Gamma$. 
 
 We now define a Markov chain $(x_n)$ with initial distribution given by the normalized spherical measure $ds$ on $S_R$ and transition kernel $\Pi$. By the equivariance property of $\Pi$, the distribution of $x_n$ is $\mu^n \ast ds$. 
 
  Furthermore, since the transition kernel  is defined by  stopping the Brownian motion, 
  by the strong Markov property we infer that 
  the map $\omega \mapsto (r_n(\omega) ) $ sends the measure $\int_{S_R} W_x ds(x)$ to the path measure $\mathsf P$ on $\Gamma^\nn$ of the right 
   random walk on $\Gamma$ induced by $\mu$. 
  
By definition  $\mu$ is the discretization measure (notice that it is not unique). 

\subsection{The cocompact case} \label{subs:compact}
The core of the proof of Theorem \ref{thm:support brownien} is the fact
 that the discretization measure satisfies the exponential moment condition (M). When $X$ is compact, this 
fact is well-known to the specialists (for instance it is announced without proof in \cite{ancona}), but also not so easy to find in print.  Let us explain this case first.
Recall that $\Gamma = \rho_{\rm can}(G)$. 

\begin{prop}\label{prop:cocompact}
If the lattice $\Gamma$ is cocompact, then the discretization measure $\mu$ satisfies the exponential moment condition \eqref{eq:exponential moment in the group} with respect to word length in $G$. 
\end{prop}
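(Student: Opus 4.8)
The plan is to show that the word length of $r_1(\omega)$ has an exponential moment under the discretization process, i.e. that
$$\int_\Gamma \exp\!\lrpar{s\,\length_G(\gamma)}\,d\mu(\gamma)<\infty \text{ for some } s>0,$$
where $\length_G$ is word length with respect to a fixed finite generating set of $G\simeq\Gamma$. Since $\Gamma$ is cocompact, $\length_G$ is comparable to the displacement: there is a constant $C$ with $\unsur{C}\length_G(\gamma)\leq d_\hh(0,\gamma 0)\leq C\length_G(\gamma)$ for all $\gamma\in\Gamma$ (the Milnor--\v Svarc lemma). Hence it is enough to prove an exponential moment for $d_\hh(0,x_1)$, where $x_1=\omega(T(\omega))$ is the discretized point produced in \S\ref{sss:stopping time} starting from a point $x_0$ distributed according to $ds$ on $S_R$. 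Because $x_1$ lies within $R$ of $r_1\cdot 0$, controlling $d_\hh(0,x_1)$ controls $d_\hh(0,r_1 0)$ up to an additive constant.

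The key point is that $d_\hh(0,x_1)\leq d_\hh(x_0,x_1)+R$, and $d_\hh(x_0,x_1)$ is bounded by the total hyperbolic length travelled by the Brownian path $\omega$ between time $0$ and the stopping time $T(\omega)$; more precisely, since $\omega$ is a Brownian path in $\hh^2$ whose drift is unit speed, $d_\hh(x_0,\omega(T))$ is controlled in distribution by $T(\omega)$ itself (together with the Gaussian fluctuations of Brownian motion in $\hh^2$, which are themselves sub-exponentially tailed, cf.\ the heat kernel estimate \eqref{eq:davies}). So the whole problem reduces to proving that the stopping time $T(\omega)$ has an exponential moment:
$$\ee_{ds}\!\lrpar{e^{s T}}<\infty \text{ for some } s>0.$$
Here I would exploit the structure of $T$ as $T=U_{k}$, where $k$ is the first ``success'' index in a sequence of independent trials, each trial being a round-trip excursion from $\fr_r$ to $\fr_R$ followed by a coin flip of success probability $p$ (given by Lemma \ref{lem:barrier}). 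Thus $k$ is a geometric random variable, and $T$ is a sum of $k$ independent ``round-trip'' times $\tau_n=U_n-U_{n-1}$. I would first establish that each $\tau_n$ has an exponential moment, uniformly in the starting point on $\fr_R$: this is where the second part of Lemma \ref{lem:barrier} is used --- the time $T_R$ to cross from $\fr_r$ to $\fr_R$ along a path in $G_x$ is bounded by $M$, and the time $T_r$ to come back to $\fr_r$ from $\fr_R$ has an exponential moment because $X$ is compact (the Brownian motion on a compact manifold hits any fixed open set after an exponentially-tailed time, by a standard Doeblin/Harnack argument, or by spectral gap). Combining a geometric number of independent exponentially-integrable summands gives an exponential moment for $T$, provided the success probability $p$ and the moment parameters are balanced so that the generating-function computation $\ee(e^{sT})=\sum_k p(1-p)^{k-1}\ee(e^{s\tau})^{k-1}\ee(e^{s\tau}\mathbf 1_{\text{success}})$ converges --- this forces $s$ small enough that $(1-p)\ee(e^{s\tau})<1$.

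I expect the main obstacle to be the uniform exponential moment for the return time $T_r$ from $\fr_R$ to $\fr_r$: one must check that the constants do not degenerate as the starting point varies over $\fr_R$, and that the excursions $\tau_n$ are genuinely i.i.d.\ (which follows from the strong Markov property applied at the stopping times $U_{n-1}$, using that $\fr_R$ is $\Gamma$-invariant so that the law of an excursion depends only on the coset, hence --- after projecting to $X$ --- not at all). Cocompactness makes both points routine: the injectivity radius is bounded below, $X$ has finite diameter, and the heat kernel is bounded above and below on the relevant time scales, so all the estimates are uniform. Once the exponential moment for $T$ is in hand, unwinding the reductions above --- $d_\hh(0,r_1 0)\lesssim d_\hh(x_0,x_1)+R\lesssim T+(\text{Gaussian fluctuation})+R$, then $\length_G\lesssim d_\hh$ --- yields \eqref{eq:exponential moment in the group}, and the proposition follows. (In the non-compact case treated in the later subsections, this last reduction is exactly what breaks down, since $\length_G$ is no longer comparable to $d_\hh(0,\cdot)$, which is why one must pass to the weaker moment condition (M) there.)
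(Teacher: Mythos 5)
Your proposal is correct in outline and follows the same overall reduction as the paper: bound the displacement $d_\hh(0,\omega(T))$, use that $\omega(T)$ is at distance $R$ from $r_1\cdot 0$, and convert to word length by the \v{S}varc--Milnor quasi-isometry available in the cocompact case. Where you diverge is in how the key exponential integrability of the stopping time $T$ is obtained: you decompose $T$ into a geometric number of round-trip excursions, each with an exponential moment, which is in fact the mechanism the paper uses later for the harder finite-volume case (Proposition \ref{prop:exponential moment stopping time}); the paper's cocompact proof is shorter, iterating via the Markov property the uniform estimate that in every time window of fixed length $\tau_1+\tau_2$ the process terminates with probability at least $p_1p_2$, which gives directly $\pp_x(T\geq t)\leq Ce^{-\alpha t}$. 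Two points in your sketch need tightening. First, your generating-function identity treats the excursion duration and the success/failure of the trial as independent, which they are not: membership in $G_y$ is a path event correlated with the crossing time $T_R$ (indeed on $G_y$ the crossing time is bounded by $M$). The correct version bounds the conditional integrals $\int_{G_y}e^{sT_R}\,dW_y$ and $\int_{G_y^c}e^{sT_R}\,dW_y$ separately and multiplies them along the excursions via the strong Markov property, exactly as in the paper's finite-volume argument; the bound $(1-p)$ times something close to $1$ survives, so this is a repairable imprecision rather than a gap. Second, passing from an exponential tail for $T$ to an exponential tail for $d_\hh(x_0,\omega(T))$ requires a maximal displacement estimate, namely that $\pp_x\bigl(\sup_{s\leq t}d(x,\omega(s))\geq Kt\bigr)$ decays exponentially (the paper's Lemma \ref{lem:martingale}, proved by Doob's inequality applied to the submartingale $\exp d(0,\omega(t))$); the fixed-time heat kernel bound \eqref{eq:davies} that you invoke controls $d(x,\omega(t))$ only at deterministic times and is not by itself sufficient at the random time $T$. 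With these two repairs your argument is complete and, if anything, more uniform with the paper's treatment of the general finite-volume case.
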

 
 \begin{proof}
By the stronger  assertion of   Lemma \ref{lem:barrier}, for every $p_1<p$  there exists a deterministic time $\tau_1$ such that for every $x\in S_r$ there exists a
  $\mathcal{F}_{T_R}$ measurable set $G_x$ such that $\mathrm{e}_*(1_{G_x}W_x) =p_1ds$ and moreover for every $\omega\in G_x^1$, $T_R(\omega)\leq \tau_1$.  
  
On the other hand, since $X = \Gamma\setminus \mathbb H$ is compact, there exists $0<p_2<1$ and $\tau_2>0$ such that for every $x\in \mathbb{H}$, the probability that a  Brownian path starting at $x$ hits $\partial _r$ in time less than $\tau_2$ is bounded from below by $p_2$. 

Therefore, by the  Markov property we infer that for every $x\in \mathbb H$ and every  $t_0$, 
$$\mathbb P_x ( T\geq t_0 + \tau_1 + \tau _2 ) \leq (1-p_1 p_2) \cdot \mathbb P_x ( T \geq t),$$ 
 where $T$ is the stopping time defined in \S \ref{sss:stopping time}. From this we deduce that there exist constants $C,\alpha >0$ independent of $x$ such that 
 \begin{equation}\label{eq:exp decay}  \mathbb P_x ( T \geq t ) \leq C  e^{-\alpha t}.
  \end{equation}
 Since the discretization measure is the image of $\int_{S_R} W_x ds(x)$ under $\omega\mapsto \omega(T(\omega))$, and the  distance between $r_1(\omega) \cdot 0$ and $\omega(T(\omega))$ is $R$,
  for every $\delta >0$,  we have that 
$$ \mu (\gamma\in \Gamma, \   d(\gamma 0, 0) \geq \delta) \leq \int \mathbb P_{x} ( d ( \omega (T(\omega)), 0)  \geq \delta -R )\ ds(x) ,$$ where $d(\cdot, \cdot)$ denotes hyperbolic distance.   Fix $K>1$   as in Lemma \ref{lem:martingale} below.   
For every $x\in S_R$, we have that 
\begin{align*}
\mathbb P_{x} ( d ( \omega (T) , 0) \geq \delta - R) 
&\leq \mathbb P_x (d( \omega (T) , x) \geq \delta- 2R )\\
 &\leq  \mathbb P_x \left( \sup _{t\leq (\delta-2R)/K} d(\omega(t), x) \geq (\delta-2R)\right) + 
 \mathbb P_{x} \left(T \geq \frac{\delta-2R}{K} \right) .  
\end{align*}
By Lemma \ref{lem:martingale} below the first term decreases exponentially fast when $\delta$ tends 
to infinity, while  the second does because of  \eqref{eq:exp decay}. By the homogeneity of hyperbolic 
plane, these estimates are uniform for $x\in S_R$, and me conclude that 
 $\mu (\gamma\in \Gamma, \   d(\gamma (0),0) \geq \delta ) $ decreases exponentially with 
 $\delta$. Finally, since $\Gamma = \rho_{\rm can}(G)$ is cocompact, 
 $(\Gamma0, d)$ is quasi-isometric to any Cayley graph of $G$ 
and we are done. 
\end{proof} 

The following result is presumably well-known but we could not locate a precise reference.

\begin{lem}\label{lem:martingale}
There exists a positive constant $K$ such that 
$$\pp_0\lrpar{\sup_{0\leq s\leq t} d(0, \omega(s))\geq Kt}$$ decreases exponentially with $t$.
\end{lem}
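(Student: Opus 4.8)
The plan is to build a supermartingale out of the distance function and apply Doob's maximal inequality. Work in $\hh^2$ (curvature $-1$, Brownian motion with generator $\Delta$), write $r_t = d(0,\omega(t))$, and use that $\cosh(d(0,\cdot))$ extends to a smooth function on all of $\hh^2$. In geodesic polar coordinates around $0$ one has $\Delta = \partial_r^2 + \coth(r)\,\partial_r + \sinh^{-2}(r)\,\partial_\theta^2$, hence
$$\Delta \cosh(d(0,\cdot)) = \cosh''(r) + \coth(r)\cosh'(r) = \cosh(r) + \coth(r)\sinh(r) = 2\cosh(d(0,\cdot)).$$
By It\^o's formula for the $\hh^2$-valued Brownian motion, $\cosh(r_t) - \cosh(r_0) - 2\int_0^t \cosh(r_s)\,ds$ is a local martingale, so that $M_t := e^{-2t}\cosh(r_t)$ is a nonnegative local martingale, hence a supermartingale, with $\ee_0[M_0] = \cosh(0) = 1$.

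Next I would apply Doob's maximal inequality for nonnegative supermartingales: for every $\lambda > 0$,
$$\pp_0\!\left(\sup_{0\le s\le t} M_s \ge \lambda\right) \le \ee_0[M_0]/\lambda = 1/\lambda.$$
Since $s\mapsto r_s$ is continuous and $r_0 = 0$, the supremum $\sup_{0\le s\le t} r_s$ is attained at some $s^\star \in [0,t]$, and then $M_{s^\star} = e^{-2s^\star}\cosh(r_{s^\star}) \ge e^{-2t}\cosh\!\big(\sup_{0\le s\le t} r_s\big)$ because $e^{-2s^\star} \ge e^{-2t}$ and $\cosh$ is increasing on $[0,\infty)$. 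Taking $\lambda = e^{-2t}\cosh(Kt)$ and using $\cosh(Kt) \ge \tfrac12 e^{Kt}$ therefore gives
$$\pp_0\!\left(\sup_{0\le s\le t} d(0,\omega(s)) \ge Kt\right) \le \pp_0\!\left(\sup_{0\le s\le t} M_s \ge e^{-2t}\cosh(Kt)\right) \le \frac{e^{2t}}{\cosh(Kt)} \le 2\,e^{(2-K)t},$$
which decays exponentially as soon as $K > 2$; one may take $K = 3$ (consistent, up to the normalization, with the fact that the linear drift on $\hh^2$ is $1$).

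The only slightly delicate point is the passage through It\^o's formula on the manifold $\hh^2$, i.e.\ checking that $f(\omega_t) - f(\omega_0) - \int_0^t \Delta f(\omega_s)\,ds$ is a local martingale for $f = \cosh(d(0,\cdot)) \in C^\infty(\hh^2)$, and that a nonnegative local martingale is automatically a supermartingale; both are standard, so I do not expect a real obstacle. Alternatively, one could avoid martingales and deduce the bound directly from the heat-kernel estimate \eqref{eq:davies}: a union bound over the unit-length subintervals of $[0,t]$ reduces everything to the fixed-time tail $\pp_0(d(0,\omega(u)) \ge a) \lesssim \mathrm{poly}(a,u)\,e^{-(a-u)^2/4u}$ together with a sub-Gaussian bound on the displacement over a unit time interval; but the supermartingale argument above is shorter.
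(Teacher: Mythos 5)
Your argument is correct, and it follows a meaningfully different route from the paper's, even though both ultimately rest on Doob's maximal inequality applied to an exponential functional of the distance. The paper observes that $z\mapsto \exp(d(0,z))$ is subharmonic (convexity of $x\mapsto\frac{1+x}{1-x}$ in the disk model), so that $\exp(d(0,\omega(t)))$ is a positive \emph{sub}martingale; it must then control the time-$t$ moment $\ee_0\big(\exp d(0,\omega(t))\big)$, which it does by chaining the triangle inequality and the Markov property over unit time steps, using the heat kernel estimate \eqref{eq:davies} to know that $\ee_0\big(\exp d(0,\omega(1))\big)<\infty$; Doob's submartingale inequality with $\alpha=e^{Kt}$ then gives the result for any $K$ exceeding the (non-explicit) constant $\ee_0\big(\exp d(0,\omega(1))\big)$. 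You instead exploit the exact identity $\Delta\cosh(d(0,\cdot))=2\cosh(d(0,\cdot))$ (legitimate, since $\cosh d(0,\cdot)$ is smooth on all of $\hh^2$ and the paper's normalization takes $\Delta$, not $\tfrac12\Delta$, as generator) to exhibit the nonnegative local martingale, hence supermartingale, $M_t=e^{-2t}\cosh(d(0,\omega(t)))$, and apply the supermartingale maximal inequality. This buys you two things: no heat-kernel or moment input is needed at all, and the admissible constant is explicit ($K>2$, e.g.\ $K=3$), whereas the paper's $K$ is implicit; on the other hand neither argument reaches the conjecturally optimal threshold $K>1$ mentioned in the paper's remark. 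The two technical points you flag (Dynkin/It\^o on $\hh^2$ after localization, and nonnegative local martingales being supermartingales via Fatou) are indeed standard, so the proof is complete as written.
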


\begin{proof}
Let us work in the disk model of hyperbolic plane. The expression for the hyperbolic 
distance is $\exp (d(0, z)) =  \frac{1+\abs{z}}{1-\abs{z}}$.   Since $x\mapsto \frac{1+x}{1-x}$ is convex and increasing, 
 $z\mapsto \exp (d(0, z)) $ is a subharmonic function. From this we infer that
 if $\omega(t)$ is the Brownian motion starting at 0, the process defined by 
 $\big(\exp (d(0, \omega(t)))\big)_{t>0}$ is a positive submartingale. 
 
From the asymptotic estimate for the heat kernel given in \S \ref{subs:brownian}, we infer that 
$\ee_0\big(\exp (d(0, \omega(t)))\big)$ is finite for every $t$. 
Now we have that
\begin{align} \label{eq:sequence}
\ee_0\big(\exp (d(0, \omega(t)))\big)&\leq \ee_0\big(\exp (d(0, \omega(\lceil t\rceil)))\big) \text{ by the submartingale property}\\
&\leq \ee_0\lrpar{\exp \lrpar{\sum_{i=0}^{\lceil t\rceil-1} d(\omega(i), \omega(i+1))   }} \text{ by the triangle inequality} \notag \\
 &= \ee_0\big(\exp (d(0, \omega(1)))\big) ^{\lceil t\rceil} \text{ by the Markov property} \notag \\
 &\leq \ee_0\big(\exp (d(0, \omega(1)))\big) ^{t+1} \notag
 \end{align}
  (for the equality on the third line we take successive conditional expectations and use the homogeneity of the hyperbolic plane).
The Doob martingale inequality applied to $\exp (d(0, \omega(t)))$ asserts that  for every $\alpha>0$, 
\begin{equation}\label{eq:doob}
 \pp_0\lrpar{\sup_{0\leq s\leq t} \exp d (0, \omega(s))\geq \alpha}\leq 
 \unsur {\alpha} \ee_0\big(\exp (d(0, \omega(t)))\big)
\end{equation}
 Hence if $K$ is any real number larger than 
 $\ee_0\big(\exp (d(0, \omega(1)))\big)$, applying  inequality \eqref{eq:doob} with $\alpha = \exp(Kt)$  together 
with \eqref{eq:sequence}
finishes the proof.
\end{proof}

\begin{rmk}
It is likely that the result holds for any $K>1$. A possible way to achieve this would be to refine the subdivision of the interval $[0,t]$ in \eqref{eq:sequence}.
\end{rmk} 

\subsection{The finite volume case}\label{subs:proof}
  We will need the following estimate on the  hitting time of the Brownian motion 
 on a given subset of $X$.
 We thank J. Franchi for explaining it to us.  

\begin{prop}\label{prop:franchi}
Let $X$ be a hyperbolic  Riemann surface of finite type, endowed with its Poincar\'e metric, and 
Liouville measure $dx$. Let $K\subset X$ be a compact set of positive area and define a stopping 
time for the Brownian motion by 
$$T_K(\omega) = \inf\set{t >0, \ \omega(t)\in K}.$$ Then there exists a constant $\alpha>0$ such that  for every 
$x\in X$, $\ee_x(e^{\alpha T_K})<+\infty$. Furthermore, the function $x\mapsto  \ee_x(e^{\alpha T_K})
<+\infty$ is locally bounded and in $L^2(X,dx)$. 
\end{prop}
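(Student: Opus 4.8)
The plan is to establish an exponential tail bound for $T_K$ that is uniform in the starting point $x$, and then derive the integrability and $L^2$ statements as corollaries. First I would observe that the key quantity to control is $\sup_{x \in X} \pp_x(T_K > t)$: if one shows $\pp_x(T_K > t) \leq C e^{-\alpha_0 t}$ for some $\alpha_0 > 0$ and $C$ independent of $x$, then choosing any $\alpha < \alpha_0$ gives $\ee_x(e^{\alpha T_K}) = \int_0^\infty \alpha e^{\alpha t} \pp_x(T_K > t)\, dt + 1 < \infty$ with a bound uniform in $x$, which immediately yields local boundedness of $x \mapsto \ee_x(e^{\alpha T_K})$, and then $L^2(X, dx)$-membership follows because $X$ has finite volume and the function is bounded.

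To get the uniform exponential tail, the natural approach is a Markov-chain / renewal argument: it suffices to find a single time $\tau_0 > 0$ and a constant $p_0 > 0$ such that for \emph{every} $x \in X$, $\pp_x(T_K \leq \tau_0) \geq p_0$; then the strong Markov property applied at times $\tau_0, 2\tau_0, \ldots$ gives $\pp_x(T_K > k\tau_0) \leq (1 - p_0)^k$, hence the exponential decay. So the heart of the matter is the uniform lower bound on the probability of hitting $K$ within bounded time. On the compact part $X \setminus \bigcup_i C_i$ this is straightforward by compactness (as in the proof of Proposition~\ref{prop:cocompact}); the difficulty is entirely concentrated in the cusps $C_i$, where a Brownian path started deep inside a cusp can take a long time to come down.

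The hard part will therefore be controlling the excursion into a cusp. Here I would model a cusp neighborhood as $\set{z \in \hh^2 : \im z > \rho_i}$ modulo $z \mapsto z+1$, where the relevant coordinate is the ``height'' $y = \im z$ (equivalently the distance to the thick part, which grows like $\log y$). Lifting to $\hh^2$, the process $\log \im \omega(t)$ is, up to a time change, a one-dimensional Brownian motion \emph{with a negative drift} (the hyperbolic drift toward the boundary of the cusp), so it is recurrent toward the compact part and in fact returns in time with an exponential moment; the estimate \eqref{eq:davies} for the heat kernel on $\hh^2$, together with the comparison of the cusp metric with the hyperbolic metric on $\hh^2$, makes this quantitative. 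One shows that from any point in a cusp, the Brownian path exits the cusp (entering the fixed compact core, which meets $K$ or at least lies at bounded distance from it) with probability bounded below in any fixed time window — but one must be careful that the path may enter a cusp, leave, and re-enter; this is handled by the same renewal scheme once one knows that each excursion is controlled. Combining the cusp estimate with the easy compact estimate gives the uniform $p_0$ and $\tau_0$, completing the argument; the $L^2$ and local boundedness assertions are then immediate from finite volume and the uniform bound.
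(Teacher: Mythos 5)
There is a genuine gap, and it is exactly at the point you identify as the heart of the matter. When $X$ has cusps there is \emph{no} pair $(\tau_0,p_0)$ with $\pp_x(T_K\leq \tau_0)\geq p_0$ for every $x\in X$: a point $x$ at depth $D$ in a cusp (i.e.\ at hyperbolic distance $\approx D$ from the compact core containing $K$) satisfies $\pp_x(T_K\leq \tau_0)\lesssim e^{-cD^2/\tau_0}$ for fixed $\tau_0$, since in bounded time a Brownian path travels only a bounded distance except on an event of Gaussian-small probability (cf.\ Lemma \ref{lem:martingale} and the heat kernel bound \eqref{eq:davies}). Your own cusp heuristic in fact points the other way: writing $u=\log \im z$ in the cusp coordinate, $u$ is a Brownian motion with drift $-1$, so the descent from depth $D$ takes time of order $D$, and consequently $\sup_{x\in X}\pp_x(T_K>t)$ does not decay in $t$ at all. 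Hence the uniform exponential tail, and the uniform bound $\sup_x \ee_x(e^{\alpha T_K})<\infty$ that you deduce from it, are false in the finite-volume case: one has $\ee_x(e^{\alpha T_K})\gtrsim e^{c\alpha\, d_X(x,K)}$, so the function is genuinely unbounded on $X$. This is precisely why the statement only claims local boundedness and $L^2$-membership, and why your final step (``$L^2$ follows because $X$ has finite volume and the function is bounded'') cannot be run; note also that local boundedness plus finite volume alone does not give $L^2$.

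Your scheme is correct, and essentially the paper's argument, in the cocompact case (compare Proposition \ref{prop:cocompact}), but for the proposition as stated the estimate must be made depth-dependent and balanced against the exponential decay of the volume of the cusps, e.g.\ by showing $\ee_x(e^{\alpha T_K})\leq C e^{C\alpha D}$ at depth $D$ and taking $\alpha$ small, or, as the paper does, by a global functional-analytic route: the spectral gap of the Laplacian on $L^2(X)$ makes the Brownian semigroup uniformly ergodic in the $L^2$ sense, the Carmona--Klein theorem then yields $x\mapsto\ee_x(e^{\alpha T_K})\in L^2(X)$ directly, and local boundedness follows afterwards from the Markov property at a fixed time $t_0$ together with $p_{t_0}(x,\cdot)\in L^2$ locally uniformly in $x$. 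As written, your proof establishes a statement that is false for non-compact $X$, so the cusp step needs to be replaced, not merely tightened.
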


Of course this uniformity is highly non-trivial for some  paths   
 go deeply inside a cusp before reaching $K$.

\begin{proof}
Let $(\Pi_t)_{t\geq 0}$ be the Brownian semigroup  on $X$, associated to $  \Delta$.     
The spectral theory of the Laplacian on $X$  implies that 
 $\Pi_t$  is strongly
  contracting, specifically there exists a constant $c>0$ such that for every $f\in L^2(X)$ of   zero mean, 
$\norm{\Pi_tf}_{L^2}\leq e^{-ct} \norm{f}_{L^2}$ (see e.g. \cite{franchi le jan} for 
 an elementary approach to the Poincar\'e inequality on $X$, which classically implies a positive lower bound for the first eigenvalue of the Laplacian). Therefore the family of operators defined by 
$Q_T =\unsur{T}\int_0^T \Pi_t$ converges strongly to $f\mapsto \int_X f=\int f(x)dx$ in $L^2$. More precisely there exists a function $c(T)$ with $\lim_{+\infty} c(T) =0$ such that 
$$\norm{Q_Tf - \int_X f}_{L^2} \leq c(T)\norm{f}_{L^2}.$$ Indeed write $f=f_0 + \int f$ with   
$\int f_0=0$. Notice that $\norm{f_0}_{L^2}\leq 2\norm{f}_{L^2}$. Then we have that  
\begin{align*}
\norm{Q_Tf - \int f}_{L^2} & =
\norm{Q_Tf_0}_{L ^2} = \norm{\unsur{T} \int_0^{T_0} \Pi_t f_0+ \unsur{T}\int_{T_0}^T \Pi_t f_0}   
\leq  \frac{2}{T}\norm{f}_{L^2} + \frac{T-T_0}{T} e^{-cT}\norm{f}_{L^2}.
\end{align*}
In this setting, a theorem by   Carmona and Klein   \cite[Thm 1]{carmona klein}  asserts that 
$x\mapsto  \ee_x(e^{\alpha T_K})$ belongs to $L^2(X)$, in particular it is finite a.e. 
Furthermore, elaborating on  \cite[Rmk 1]{carmona klein}, we  see that $\ee_x(e^{\alpha T_K})$ is locally bounded. Indeed let $t_0>0$, and use the Markov property to write 
\begin{align*}
\ee_x(e^{\alpha T_K})  = \int_{\set{T_k\leq t}} e^{\alpha T_K} dW_x &+ \int_{\set{T_k> t_0}} 
e^{\alpha T_K} dW_x 
 \leq e^{\alpha t_0} + e^{\alpha t_0} \int  \ee_y(e^{\alpha T_K}) p_{t_0}(x,y)dy\\
&\leq  e^{\alpha t_0} + e^{\alpha t_0} \norm{\ee_y(e^{\alpha T_K})}_{L^2} 
\norm{p_{t_0}(x, \cdot)}_{L^2},
\end{align*}
where $\Pi_t(x, y)dy$ is the transition kernel.
 Now, as it easily follows from the estimates  in \S\ref{subs:brownian}, for fixed $t_0$ the heat kernel $p_{t_0}(x, \cdot)$
is in $L^2$, locally uniformly with respect to $x$. The result follows.
\end{proof}

From this proposition we get estimates on the stopping times $T_r$ and $T_R$ in $\mathbb{H}$.

\begin{cor}\label{cor:franchi}
There exists uniform constants $\alpha_0$ and $M$ such that for every $x\in \fr_R$, 
$\ee_x(\exp{\alpha_0 T_r})\leq M$  (resp. for every $x\in \fr_r$, 
$\ee_x(\exp{\alpha_0 T_R})\leq M$).
\end{cor}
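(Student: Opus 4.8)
The plan is to deduce Corollary~\ref{cor:franchi} from Proposition~\ref{prop:franchi} by lifting the picture from $X$ to $\hh$ and choosing an appropriate compact set $K$ on $X$. First, I would pick a small compact set $K_0\subset X$, specifically a closed metric neighborhood of the projection $\pi(S_r)$ of the sphere $S_r$ (equivalently, of $\pi(S_R)$), chosen so that $K_0$ contains a uniform-radius ball around $\pi(0)$; in particular $K_0$ has positive area, so Proposition~\ref{prop:franchi} applies and yields $\alpha>0$ with $x\mapsto \ee_x(e^{\alpha T_{K_0}})$ locally bounded on $X$. Since $\pi(S_R)$ is compact, the function $x\mapsto\ee_x(e^{\alpha T_{K_0}})$ is bounded by some $M_0$ on $\pi(S_R)$.

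The next step is to pass from a bound on the hitting time of the \emph{downstairs} set $K_0$ to a bound on $T_r$ (the hitting time of $\fr_r=\bigcup_\gamma \fr B(\gamma 0,r)$) \emph{upstairs} in $\hh$. The key point is that the Wiener measures on $X$ and on $\hh$ are canonically identified via the projection $\pi$: a Brownian path $\widetilde\omega$ in $\hh$ issued from a lift $\widetilde x$ of $x\in\pi(S_R)$ projects to a Brownian path $\omega$ in $X$ issued from $x$. Once $\omega$ enters $K_0$, the lifted path $\widetilde\omega$ lies in some $\Gamma$-translate $\gamma\widetilde K_0$ of a fixed compact lift $\widetilde K_0$ of $K_0$; by construction $\widetilde K_0$ is contained in a fixed ball, so after entering it the path $\widetilde\omega$ is within bounded distance of $\gamma 0$, and there remains only a bounded-in-probability extra time to actually hit $\fr B(\gamma 0,r)$. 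More precisely I would write $T_r \le T_{K_0}^{(X)} + \tau$, where $\tau$ is the hitting time, by the shifted path, of $\fr B(\cdot\, 0, r)$ starting from a point of $\widetilde K_0$; by compactness of $\widetilde K_0$ (and homogeneity) there is a uniform constant and $p_3>0$, $\tau_3>0$ with $\pp_y(\tau\le\tau_3)\ge p_3$ for all relevant $y$, whence (iterating via the strong Markov property, exactly as in the proof of Proposition~\ref{prop:cocompact}) $\ee_y(e^{\alpha' \tau})$ is uniformly bounded for a possibly smaller $\alpha'>0$. Combining with Hölder's inequality (to split the exponential of a sum into a product of exponentials of each summand) and the bound on $\pi(S_R)$, we get a uniform $M$ and $\alpha_0\le\min(\alpha,\alpha')$ with $\ee_x(e^{\alpha_0 T_r})\le M$ for $x\in\fr_R$. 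The statement for $T_R$ with $x\in\fr_r$ is symmetric, replacing $K_0$ by a neighborhood of $\pi(S_R)$ and running the same argument.

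The main obstacle I anticipate is the bookkeeping at the cusps and the passage between upstairs and downstairs hitting times: one must be careful that $\fr_r$ upstairs really is hit quickly once the projected path enters $K_0$, uniformly over \emph{which} translate $\gamma 0$ we land near and uniformly over the (non-compact) surface. This is handled by the homogeneity of $\hh$ (all the balls $B(\gamma 0,r)$ are isometric copies) together with the compactness of the fixed lift $\widetilde K_0$, so that only a fixed local estimate near $0\in\hh$ is needed. A secondary point worth stating carefully is the identification of Wiener measures under $\pi$, already recalled in \S\ref{subs:brownian}, and the fact that hitting $\fr_r$ in $\hh$ is the same event as hitting $\pi(\fr_r)=\pi(S_r)$ in $X$ only up to the choice of the correct translate — but since we only need an upper bound on $T_r$, it suffices to hit \emph{some} translate, i.e.\ to hit $\pi(S_r)$ downstairs, which is exactly what $K_0\supset\pi(S_r)$ gives. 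Everything else (Hölder, iteration of Markov, local boundedness of the heat kernel in $L^2$) is routine and imported from Proposition~\ref{prop:franchi} and the proof of Proposition~\ref{prop:cocompact}.
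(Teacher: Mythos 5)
There is a genuine gap in your treatment of the leftover time $\tau$. Because you take a two-sided neighborhood $K_0$ of $\pi(S_r)$, a path started on $\fr_R$ generically enters $K_0$ through its outer boundary before touching the circle itself, so you are forced to add the extra hitting time $\tau$ of $\fr_r$ and to prove it has a uniform exponential moment. Your argument for this is a single-step estimate ($\pp_y(\tau\le\tau_3)\ge p_3$ for $y$ in a translate of $\widetilde K_0$, which is fine) followed by an iteration ``exactly as in the proof of Proposition \ref{prop:cocompact}''. That iteration is exactly what breaks down here: conditioned on the event $\{\tau>\tau_3\}$, the path at time $\tau_3$ is at an arbitrary point of $\hh$, not in any translate of $\widetilde K_0$; to conclude $\pp_y(\tau>k\tau_3)\le(1-p_3)^k$ one needs a lower bound on the probability of hitting $\fr_r$ within bounded time that is uniform over \emph{all} starting points, and such a bound holds only when $X$ is compact. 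When $X$ has cusps --- the only case in which this corollary is not already covered by \S\ref{subs:compact} --- the path may, after a failed window, sit deep inside a cusp, far from the orbit $\Gamma\cdot 0$, where the probability of reaching $\fr_r$ in time $\tau_3$ is arbitrarily small. This is precisely the difficulty that Proposition \ref{prop:franchi} (Carmona--Klein) was introduced to circumvent, so invoking the cocompact iteration at this point begs the question.

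The missing idea is to make the leftover time disappear altogether: fatten the circle only on the side \emph{opposite} to the starting sphere (inward when starting from $\fr_R$, outward when starting from $\fr_r$). The resulting closed annulus is compact of positive area, and since a path issued from the starting sphere must cross the circle before entering the annulus, the hitting time of the annulus coincides exactly with $T_r$ (resp. $T_R$); Proposition \ref{prop:franchi}, together with local boundedness of $x\mapsto\ee_x(e^{\alpha T_K})$ on the compact circle $\pi(S_R)$ (resp. $\pi(S_r)$) and the identification of Wiener measures under $\pi$, then gives the uniform bound directly, with no splitting, no H\"older step and no iteration. Alternatively, your scheme could be repaired, but only by a genuinely heavier argument (iterating returns to $K_0$, each return time controlled by Proposition \ref{prop:franchi} and the $L^2$ heat-kernel bound, rather than by a uniform quick-hitting probability); as written, the step bounding $\ee_y(e^{\alpha'\tau})$ is not justified.
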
  

\begin{proof}
To prove the corollary we work on $X$. Abusing notation we set 
$\fr_r = \pi S_r$, and similarly for $\fr_R$, where $\pi:\hh^2\cv X$ is the natural projection. Recall that by assumption $B(0, R)$ projects to a ball on $X$. 
Assume that  $x\in \fr_R$, and fatten $\fr_r$ on the side opposite to $x$ to make it a 
closed annulus $\widetilde{\fr_r}$.  Then the hitting times of $\fr_r$ and $\widetilde{\fr_r}$ are the same, and we conclude by Proposition \ref{prop:franchi}. The other case is similar.  
\end{proof}

The next step is to show that the stopping time  $T$ of \ref{sss:stopping time} defining the discretization
 admits an exponential moment.
 
 \begin{prop}\label{prop:exponential moment stopping time} 
With notation as in \S \ref{subs:discretization}, 
 there exist constants  $\alpha$ and $M$ such that for every $x\in S_R$, 
 $$\ee_x(\exp{(\alpha T)})\leq M.$$
 \end{prop}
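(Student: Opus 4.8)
The plan is to show that the stopping time $T$, which records when the discretized Brownian path first completes a successful excursion from $\partial_r$ to $\partial_R$ through one of the ``good'' sets $G_x$, decomposes into a geometric number of elementary pieces, each of which carries a uniform exponential moment. First I would recall from \S\ref{sss:stopping time} that $T = U_k$, where $k$ is the first index at which $\sigma_{u_n}\omega \in G_{\omega(u_n)}$. Crucially, by the construction of the sets $G_x$ in Lemma \ref{lem:barrier}, at each visit to $\partial_r$ the conditional probability of being ``good'' is exactly $p$, independently of the past (this is where the $\mathcal{F}_{T_R}$-measurability and the equivariant extension $G_{\gamma x} = \gamma G_x$ are used). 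Hence $k$ is a geometric random variable of parameter $p$, stochastically dominated uniformly in the starting point.

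Next I would write $T = U_k = U_0 + \sum_{n=1}^{k}\big( (u_n - U_{n-1}) + (U_n - u_n)\big)$, i.e. as a sum of $2k+1$ consecutive hitting-time increments: each $u_n - U_{n-1} = T_r(\sigma_{U_{n-1}}\omega)$ is a hitting time of $\partial_r$ started from a point of $\partial_R$, and each $U_n - u_n = T_R(\sigma_{u_n}\omega)$ is a hitting time of $\partial_R$ started from a point of $\partial_r$. By Corollary \ref{cor:franchi}, both types of increments satisfy a uniform exponential moment bound $\ee_x(\exp(\alpha_0 T_\bullet)) \leq M$, uniformly over the relevant starting sphere, and by the strong Markov property these increments are conditionally independent given the past (and in particular given the value of $k$). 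The estimate for $U_0 = T_R$ started from $x \in S_R$ is trivial since then $U_0 = 0$.

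The computation is then a standard one: condition on $k=m$, use the strong Markov property to factor $\ee_x(\exp(\alpha T)\mid k=m)$ into a product of at most $2m+1$ single-increment exponential moments, each bounded by $M$ for a small enough exponent $\alpha \leq \alpha_0$, and then sum over $m$ against the geometric tail $\pp(k=m) \asymp (1-p)^{m}$. As long as $\alpha$ is chosen small enough that $M^{2}(1-p) < 1$ — equivalently, the per-excursion exponential cost $M^2$ is outweighed by the per-excursion success probability — the geometric series converges, giving a uniform bound $\ee_x(\exp(\alpha T)) \leq M'$ for all $x \in S_R$. One subtlety to handle carefully is that the ``goodness'' of an excursion must be decided using only information up to time $U_n$, so that $\{k \geq m\}$ is independent of the increment data beyond step $m$; this is exactly guaranteed by the $\mathcal{F}_{T_R}$-measurability of $G_x$ (combined with the pieces $A_x(\tau_1)$ of Lemma \ref{lem:barrier}), so that conditioning on $k$ does not destroy the exponential moments of the constituent hitting times.

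The main obstacle is the interplay between the two sources of randomness: the (geometric) number of excursions and the (heavy-tailed, but exponentially integrable) durations of individual excursions. One must verify that conditioning on $k$ being large does not bias the individual hitting times upward in a way that would break the exponential moment — this is precisely why the events defining $k$ must be measurable with respect to the filtration at the excursion endpoints rather than depending on the future, and why the uniform (starting-point-independent) bound in Corollary \ref{cor:franchi}, itself resting on the nontrivial Proposition \ref{prop:franchi}, is indispensable. Once this independence structure is pinned down, the remaining work is the elementary geometric-series estimate sketched above.
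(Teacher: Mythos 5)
Your skeleton --- excursion decomposition of $T$, strong Markov factorization, geometric series over the number $k$ of excursions --- is the same as the paper's, but the decisive estimate is missing, and the justification you offer in its place does not supply it. The delicate point is exactly the one you flag as a ``subtlety'': conditioning on $k=m$ restricts each of the first $m-1$ return excursions to the bad set $G_y^c$, and this restriction is correlated with the duration of that \emph{same} excursion. The $\mathcal{F}_{T_R}$-measurability of $G_y$ only prevents the label from depending on later increments; it does not prevent it from biasing the current one. In fact, in the refined version of Lemma \ref{lem:barrier} the good set is contained in $\set{T_R\leq \tau_1}$ for a deterministic $\tau_1$, so $G_y^c$ contains \emph{all} long excursions, and the only bound Corollary \ref{cor:franchi} gives directly is $\ee_y\big(e^{\alpha T_R}\,\big|\,G_y^c\big)\leq M(\alpha)/q$ with $q=1-p$, not $M(\alpha)$. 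Insert this into your bookkeeping: the factor $q^{m-1}$ from $\pp(k=m)=q^{m-1}p$ is exactly cancelled, every per-increment bound is $\geq 1$, and the series over $m$ diverges. For the same reason the condition ``$M^2(1-p)<1$'' cannot be met with $M$ the fixed constant of Corollary \ref{cor:franchi}; you would at least need to explain why the per-increment bound improves as $\alpha\to 0$ (e.g.\ via H\"older, $\ee e^{\alpha T_\bullet}\leq M^{\alpha/\alpha_0}$), and even that is not enough because of the cancellation just described.

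What is actually needed, and what the paper proves, is a \emph{restricted} moment bound that keeps the factor $q$: for every $\beta>0$ there is $\alpha>0$ such that, uniformly in $y\in S_r$ and $x\in S_R$, $\int_{G_y^c}e^{\alpha T_R}\,dW_y\leq q\,e^{\beta}$, $\int_{G_y}e^{\alpha T_R}\,dW_y\leq p\,e^{\beta}$ and $\int e^{\alpha T_r}\,dW_x\leq e^{\beta}$. This follows from Corollary \ref{cor:franchi} via Chebychev, $W_y(T_R\geq t)\leq Me^{-\alpha_0 t}$, by splitting the integral over $\set{T_R\leq \beta'/\alpha}$, where $e^{\alpha T_R}\leq e^{\beta'}$, and the tail, whose contribution is $O\big(e^{-\alpha_0\beta'/\alpha}\big)$ and hence negligible for small $\alpha$ because $W_y(G_y^c)=q$ is bounded below. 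With these bounds the term of index $m$ in your sum is at most $e^{(2m)\beta}q^{m-1}p$ (the probabilities $p,q$ now come out of the restricted integrals, not as an extra factor), and choosing $\beta$ with $e^{2\beta}q<1$ closes the argument. In short, your proposal has the right architecture, but the key conditional estimate --- that the exponential moment restricted to the bad set is at most $q e^\beta$ with $\beta$ arbitrarily small --- is asserted rather than proved, and the independence/measurability argument you give cannot replace it.
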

 
 \begin{proof}
Let $x\in S_R$. Recall that for a.e. $\omega\in W_x$ 
we defined in \S\ref{sss:stopping time} the integer $k(\omega)$ as the first time at which $\omega(u_n(\omega))$ enters the good set $G_{\omega(u_n)}$.
For a constant $\alpha$ to be determined later, 
we decompose the integral $\int \exp (\alpha T) dW_x$ as a sum 
\[ \sum_K \int _{\Omega_x^k} \exp (\alpha T ) dW_x \]
where $\Omega_x^K = \{  \omega\ |\ k(\omega) = K \}$. 
We define a sequence of  shifted paths    $\omega_l$, $0\leq l\leq 2K-1$ 
 by $\omega_0 = \omega$, $\omega_1 = \sigma_{u_1}(\omega)$, $\omega_2 = \sigma_{U_1}(\omega)$, etc. 
 We also define for $1\leq l\leq 2K-1$, $y_l(\omega) = \omega_l(0)$, so that for even $l$ (resp. odd $l$), 
 $y_l\in S_R$ (resp. $y_l\in S_r$). Notice that by definition of $K$, 
 for $l = 2j-1$, $j<K$, $\omega_l\notin G_{y_l}$ while 
 $\omega_{2K-1}\in G_{y_{2K-1}}$.
 
Writing 
 $T(\omega) = T_r(\omega_0)+T_R(\omega_1)+\cdots + T_R(\omega_{2K-1})$,  
 taking successive conditional expectations with respect to the events $T_{\bullet}(\omega_l) = y_{l+1}$ (with $\bullet=r$ or $R$) and 
   applying the strong Markov property  yields
    \begin{align} \label{eq:integral} 
 &    \int_{\Omega_x^k} \exp (\alpha T ) dW_x = \int_{\Omega_x} \exp (\alpha T_r(\omega_0)) dW_x (\omega_0) \cdots  \\
     & \cdots  \int _{G_{y_{2j-1}} ^c}  \exp (\alpha T_R (\omega_{2j-1}) )dW_{y_{2j-1}}(\omega_{2j-1}) \int _{\Omega_{y_{2j}}} \exp (\alpha T_r(\omega_{2j}))dW_{y_2j}(\omega_{2j}) \cdots \notag\\
&\quad \cdots \int_{G_{y_{2K-1}}} \exp (\alpha T_R (\omega_{2K-1})) dW_{y_{2K-1}} (\omega_{2K-1})\notag.
\end{align}

Now we claim that for every $\beta>0$ there exists $\alpha>0$   such that putting $q=1-p$ we have 
   \begin{enumerate}[(i)]
 \item \label{e:1} $\forall x\in S_R$, $\int \exp (\alpha T_r) dW_x \leq e^\beta$
 \item  \label{e:2} $\forall y\in S_r$, $\int_{G_y^c} \exp (\alpha T_R) dW_y \leq q e^\beta$ and $\int_{G_y} \exp (\alpha T_R) dW_y \leq p e^\beta$.
 \end{enumerate}
From this and \eqref{eq:integral} we immediately get that 
\begin{equation}\label{eq:exponential estimates} \int_{\Omega_x^K} \exp (\alpha T) dW_x \leq e^{2K\beta} q ^{K-1} p. \end{equation}
Thus, choosing $\beta$ such that  $e^{ 2\beta} q <1$ and summing over $K$, we obtain that 
 \[ \int \exp (\alpha T) dW_x \leq \frac{p}{q} \sum_K (e^{2\beta} q ) ^K <+\infty ,\]
 and the result follows.
 
\medskip

It remains to prove our claim. We only deal with (\ref{e:2}), the other case being analogous.
Let $0<\alpha < \alpha_0/2$, where $\alpha_0$ is in Corollary \ref{cor:franchi},
let  $\beta ' = \beta/2$ and set $E= G_y$ or $E= G_y^c$. Applying the Chebychev inequality, Corollary \ref{cor:franchi} implies that for every $y \in \partial _r$, we have 
\[ W_y ( T_R  \geq t) \leq M \exp (-\alpha_0 t). \]
We partition the set $E$ as 
$$E= \lrpar{E \cap \{ \alpha T_R \leq \beta'\} }\cup \bigcup_{n=0}^\infty
\lrpar{E \cap \{ \beta' + \alpha n < T_R < \beta' + \alpha (n+1) \} }$$
and decompose the   integral $\int _E \exp (\alpha T_R ) dW_y $ 
  accordingly, which  yields     
\begin{align*}
 \int _E \exp (\alpha T_R ) dW_y &\leq e^\beta W_y(E)  +  \sum _n \pp_x \left(T_R \geq \frac{\beta'}{\alpha} +n\right) e^{\beta' + \alpha (n+1)} \\& \leq e^{\beta'} W_x(E) + C \exp \left( -\frac{\alpha_0 \beta' }{ \alpha} \right)
 \end{align*}
with $C= \frac{e^{\alpha_0 + \beta'} M}{1-e^{-\alpha_0/2}}$. Note that $W_x(E) $ equals $p$ or $q$, that are both fixed positive numbers, so that if $\alpha$ is sufficiently small we have  that 
\[ \int _E \exp (\alpha T_R ) dW_y \leq e^{\beta'} W_x(E) + C \exp \left( -\frac{\alpha_0 \beta' }{ \alpha} \right) \leq e^\beta W_x (E) .\]
and claim~\eqref{e:2} follows. The proof is complete. 
\end{proof}

We can now show that the discretization measure satisfies the moment condition (M). 

\begin{prop}\label{prop:moment} Let $X$ be a hyperbolic Riemann surface of finite type, 
  $\rho$ be a parabolic representation of $G =  \pi_1(X)$ and $\mu$ be the discretization 
of the Brownian motion. Then there is a constant $\alpha = \alpha_\rho >0$ such that 
\begin{equation}\label{eq:moment}
  \int \norm{\rho (g)}^{\alpha} d\mu (g) <\infty.
\end{equation} 
Furthermore, the constant $\alpha_\rho$  can be chosen to be locally uniform in  
$\mathrm{Hom}_{\rm par} (\Gamma,\mathrm{PSL}(2,\mathbb C))$. In this   case the value of the integral  in \eqref{eq:moment} is also locally uniform.
\end{prop}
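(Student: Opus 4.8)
The plan is to combine the exponential moment for the stopping time $T$ (Proposition~\ref{prop:exponential moment stopping time}) with the comparison estimate of Corollary~\ref{cor:compar}, which bounds $\norm{\rho(g)}$ in terms of $\norm{\rho_{\rm can}(g)}^{\beta_\rho}$, i.e. in terms of the hyperbolic displacement of $\rho_{\rm can}(g) = \gamma$ acting on $\hh^2$. Recall that the discretization measure $\mu$ is the law of $r_1(\omega)$, where $\omega \mapsto r_1(\omega)$ sends $\int_{S_R} W_x\, ds(x)$ to $\mathsf P$, and that $x_1(\omega) = \omega(T(\omega))$ lies on $\partial B(r_1(\omega)\cdot 0, R)$. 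Hence $d_\hh(0, r_1(\omega)\cdot 0) \leq d_\hh(0, \omega(T(\omega))) + R$. Using $2\log\norm{\rho_{\rm can}(g)} \asymp d_\hh(0, g\cdot 0)$ together with Corollary~\ref{cor:compar}, it suffices to prove that for a suitable $\alpha > 0$,
$$\int \exp\lrpar{\alpha\, d_\hh\big(0, \omega(T(\omega))\big)}\, dW_x(\omega) \leq M$$
uniformly in $x \in S_R$, with $M$ locally uniform in $\mathrm{Hom}_{\rm par}(G, \PSL)$ (the local uniformity coming from the fact that $\beta_\rho$, hence the exponent, depends only on $C_\rho$, which is locally uniformly bounded by Proposition~\ref{prop:lipschitz}).

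To establish this exponential moment for the displacement at the stopped path, I would first split, for $x \in S_R$ and a constant $K$ to be chosen, using that $d_\hh(0,\omega(T))\leq d_\hh(0,x)+d_\hh(x,\omega(T)) \leq R + \sup_{0\le s \le T} d_\hh(x,\omega(s))$:
$$W_x\big( d_\hh(0,\omega(T)) \geq \delta\big) \leq W_x\Big(\sup_{0\leq s\leq \delta/K} d_\hh(x, \omega(s)) \geq \delta - R\Big) + W_x\Big(T \geq \tfrac{\delta}{K}\Big).$$
The second term decays exponentially in $\delta$ by the Chebyshev inequality applied to Proposition~\ref{prop:exponential moment stopping time} (with $M$ locally uniform). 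The first term is exactly the kind of quantity controlled by Lemma~\ref{lem:martingale}: by the homogeneity of $\hh^2$ the law of $\sup_{0\le s\le t} d_\hh(x,\omega(s))$ under $W_x$ is independent of $x$, so choosing $K$ larger than the constant in Lemma~\ref{lem:martingale} makes $W_x\big(\sup_{0\leq s\leq \delta/K} d_\hh(x,\omega(s)) \geq \delta - R\big)$ decay exponentially in $\delta$ as well. Integrating the tail bound against $e^{\alpha' \delta}$ for $\alpha'$ smaller than both exponential rates gives the uniform bound on $\ee_x(\exp(\alpha' d_\hh(0,\omega(T))))$, and then $\alpha_\rho := \alpha'/(2\beta_\rho)$ (after absorbing the additive $O(1)$ and $R$ into the constant $M$) does the job for \eqref{eq:moment}.

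The main obstacle here is making sure all the constants are genuinely uniform in the right sense: the stopping time $T$ and its exponential moment depend only on $(X, \Gamma)$ and the geometric data $r, R, p$, not on $\rho$, so the only $\rho$-dependence enters through $\beta_\rho$ in Corollary~\ref{cor:compar}, which is controlled by $C_\rho$, itself locally uniformly bounded on $\mathrm{Hom}_{\rm par}(G,\PSL)$ by the last assertion of Proposition~\ref{prop:lipschitz}. Thus $\alpha_\rho$ can be taken locally constant and the value of the integral locally bounded. A minor technical point to be careful about is that $\norm{\rho_{\rm can}(g)} > 1$ is bounded away from $1$ (used already in Corollary~\ref{cor:compar}), so that the passage between $\norm{\rho_{\rm can}(g)}^{\beta_\rho}$ and $\exp(\beta_\rho d_\hh(0, g\cdot 0))$ costs only a multiplicative constant; this is where the absence of elliptic elements in $\Gamma$ is used. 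I would also note in passing that the cocompact case (Proposition~\ref{prop:cocompact}) already gives a stronger statement — an exponential moment in word length — but for finite volume $\Gamma$ one genuinely cannot expect better than \eqref{eq:moment}, because paths penetrating far into a cusp contribute group elements whose displacement is controlled but whose word length is not.
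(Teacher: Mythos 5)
Your proposal is correct and follows essentially the same route as the paper: reduce via Corollary \ref{cor:compar} (with $\beta_\rho$ controlled by the locally uniform Lipschitz constant of Proposition \ref{prop:lipschitz}) to an exponential moment for the displacement $d_\hh(0,\omega(T))$, which is obtained by the same tail decomposition used in Proposition \ref{prop:cocompact} — Lemma \ref{lem:martingale} for the supremum term and the exponential moment of $T$ (Proposition \ref{prop:exponential moment stopping time}, via Chebyshev) for the other. The only difference is that you write out explicitly the steps the paper dispatches with ``the proof is identical to that of Proposition \ref{prop:cocompact}''.
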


\begin{proof} 
For $\rho = \rho_{\rm can}$, given Proposition \ref{prop:exponential moment stopping time}, the proof 
is identical to that of Proposition \ref{prop:cocompact}. However in this case we cannot transfer the 
moment condition in $\Gamma$ to a moment condition with respect to word length in $G$ (see   
Remark \ref{rmk:moment} below). Instead we use the Lipschitz property of the spherical metrics on 
the fibers of $M_\rho$ to obtain moment estimates for $\rho(G)$. 

Let us be more precise. The proof of Proposition \ref{prop:cocompact} shows that the exponential 
decay property \eqref{eq:exp decay} implies that 
$$ \mu (\gamma\in \Gamma, \   d(\gamma(0), 0) \geq \delta) \leq e^{-c\delta}$$ for some positive 
constant $c$. 
Since $\norm{\gamma} \asymp \log d(\gamma 0, 0)$ we infer that there exists $\alpha>0$ such that 
$$\int_G \norm{\rho_{\rm can} (g)}^{\alpha} d\mu (g)<\infty.$$ Applying Corollary \ref{cor:compar}   thus finishes the proof.
 \end{proof}

\begin{rmk}\label{rmk:moment}  
 When $X$ is not compact, the moment condition in \eqref{eq:moment} cannot be replaced by \eqref{eq:exponential moment in the group}. Actually, 
 $\mu$ does not even have a finite moment with respect to word length in $G$, i.e. $\int_G \length(g) d\mu(g)=\infty$. This was shown (somehow implicitly) by Guivarc'h and Le Jan in \cite{guivarch lejan}. 
See also \cite[Corollary 1.22]{dkn} for a different approach, as well as Gruet \cite[Rmk p. 500]{gruet} for a neat computation in the case of the modular surface.
\end{rmk}

\subsection{Proof of Theorem \ref{thm:support brownien}}\label{subs:proof proof}
Let $\mu$ be the discretization measure and $\chi_\mu$ be the associated Lyapunov exponent. We showed in Proposition \ref{prop:moment} that $\mu$ satisfies the moment condition (M).
Furthermore, it is easy to see that  $\supp(\mu) = G$. Indeed, recall the  balayage construction of 
$\mu$. We start with the circle measure $ds$ on $S_R$ and ``sub-balayage" it successively  
on $\fr_R = \Gamma \cdot S_R$. The first step of the process is to balayage $ds$ onto $\fr_r =\Gamma \cdot S_r$. It is clear that the image measure has positive  mass on every circle 
$\gamma S_r$, $\gamma \in \Gamma$. Now for each circle  $\gamma S_r$, there is a set of positive 
measure   of good paths, that leaves  some measure on $\gamma S_R$ (for the remaining paths, we 
repeat the process over again). Therefore $\mu$ puts some mass on $\set{\gamma}$.

By Theorem \ref{thm:support}, $\supp(dd^c\chi_\mu) = \bif$. To prove the theorem, we show that 
there exists  a positive constant $c$ such that $\chi_{\rm Brown} = c\chi_\mu$, hence $\supp(\tbif) = 
\supp(T_{{\rm bif}, \mu}) =\bif$. Under (R3), the positivity and H\"older continuity properties of
 $\chi_{\rm Brown}$ then follow from classical results due  to
 Furstenberg \cite{furstenberg} and Le Page \cite{lepage holder} .

 Let $T$ be the stopping time defining $\mu$, and $T_n$ be the sequence of times obtained by repeating the stopping  process. 
Let $W_{S_R}$ be defined by $W_{S_R} =\int_{S_R} W_x ds(x)$. For $W_{S_R}$ a.e. $\omega$, 
$\omega(T_n) \in r_n(\omega)S_R$, with $(r_n)\in \Gamma^\nn$ as in \S \ref{sss:stopping time}. 

Identify $\Gamma$ and $G$ through $\rho_{\rm can}$ and consider  
a general parabolic representation $\rho$. Let as before $\sigma_{r_n}$ denote the geodesic segment  joining 
$0$ and $r_n( 0)$.
Since 
$d(r_n( 0), \omega(T_n))=R$ is constant we infer that  
$$\unsur{n}\abs{\log\norm{h_\rho({\sigma_{r_n(\omega)}}) }-  \log\norm{h_\rho({\omega(T_n)})}} 
= \unsur{n} \abs{\log\norm{\rho(r_n)} -  \log\norm{h_\rho({\omega(T_n)})}} \longrightarrow 0.$$ 
Since the law of $r_n$ is $\mu^n$ we thus deduce that 
\begin{equation}\label{eq:chibrown}
\chi_\mu(\rho) = 
\lim_{n\cv\infty} \unsur{n}\int_G \log\norm{\rho(g)}d\mu^n(g) 
= \lim_{n\cv\infty} \unsur{n} \int   \log\norm{h_\rho({\omega(T_n)})} dW_{S_R}(\omega).
\end{equation}
To conclude the proof, it remains to show that there exists a constant $\tau>0$ such that $\frac{T_n}{n}\cv  \tau $ for $W_{S_R}$ a.e. $\omega$. This, together with Proposition 
\ref{prop:lyap2}, implies that the right hand 
side of \eqref{eq:chibrown} equals $ \tau \chi_{\rm Brown}$

%

\medskip

To prove this, we view ${S_R}$ as a subset of $X$. Let $\theta_T$ be the time shift by $T(\omega)$, acting on the set of Brownian paths $\omega$ 
issued from $S_R$,  equipped with the measure $W_{S_R}$. 
By construction the law of $\omega(T)$ is $ds(x)$. From this fact and the strong Markov property
we see that $(\theta_T)_* W_{S_R} = W_{S_R}$.   
Furthermore there cannot be any measurable $\theta_T$-invariant subset, again by the strong Markov property 
together with the fact that    for every 
$x\in S_R$, the law of $\omega(T)$, conditioned to start at $x$, is $ds(x)$. Therefore $\theta_T$ is 
$W_{S_R}$-ergodic 

To conclude,  observe   that $T_n(\omega)  = \sum_{k=0}^{n-1} T(\theta_T^k \omega)$ and recall  
from Proposition \ref{prop:exponential moment stopping time} that $T$ is integrable.
Thus, property (ii) follows from Birkhoff's ergodic  theorem. The proof is complete.
\qed

\subsection{The Lyapunov exponent associated to  the  geodesic flow}\label{subs:geodesic}
 As before  let $X$ be a Riemann surface of finite type with a marked point $\star$, 
   endowed with its  hyperbolic metric. For  $v\in T^1_\star X$ a unit tangent vector, we let $\gamma_{v}$ be the 
   associated unit speed   geodesic ray. 
    These geodesics form   another natural class of paths on $X$.
    We also denote by $\widetilde\gamma_{v,t}$ the loop obtained by appending to 
   $\gamma_v\rest{[0,t]}$ a shortest path returning to $\star$ (as before, we disregard the  possible ambiguities happening here).
   This point of view  allows to define a Lyapunov exponent function, 
   similarly to Definition  \ref{def:lyap}. It turns out that it coincides with $\chi_{\rm Brown}$ up to a multiplicative constant. 
   Proposition \ref{propo:A} immediately follows.

\begin{thm}\label{thm:chi geodesic}
 Let $X$ be a hyperbolic Riemann surface of finite type, endowed with its Poincar\'e metric. Let $\rho:\pi_1(X, \star)\cv \PSL$ be a parabolic representation,  and $\norm{.}$ a smooth  Lipschitz  family of spherical metrics on the suspension $M_{\rho}$. Then for a.e. $v$ the limit 
$$\chi_{\rm geodesic}(\rho):=\lim_{t\cv\infty} \unsur{t}\log\norm{ h_\rho\big({\gamma_{v}\rest{[0,t]}}\big)} =
\lim_{t\cv\infty} \unsur{t}\log\norm{ h_\rho\big(   \widetilde\gamma_{v,t}  \big)}  $$ exists and
is equal to $\chi_{\rm Brown}(\rho)$ (in particular it does not depend on $v$).
\end{thm}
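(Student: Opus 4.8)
The plan is to deduce Theorem~\ref{thm:chi geodesic} from the already-established Brownian picture (Propositions~\ref{prop:lyap} and~\ref{prop:lyap2}) via the standard \emph{foliated/geodesic correspondence} on the unit tangent bundle. First I would reformulate the problem on $T^1X$. Writing $\pi:M_\rho\to X$ for the suspension, pull everything back to the bundle $N_\rho=T^1X\times_X M_\rho$ over $T^1X$; the geodesic flow $g_s$ on $T^1X$ lifts to a flow on $N_\rho$ by parallel transport along geodesics, and for a unit tangent vector $v$ the cocycle $s\mapsto\log\norm{h_\rho(\gamma_v\rest{[0,s]})}$ is exactly the norm of the derivative cocycle of this flow. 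This is a subadditive cocycle over $g_s$. The hyperbolic (Poincar\'e) volume on $X$ lifts to the Liouville measure $m$ on $T^1X$, which is $g_s$-invariant, finite (since $X$ has finite volume), and ergodic — this is the classical ergodicity of the geodesic flow of a finite-volume hyperbolic surface (Hopf). One checks $W$-integrability of $H_s(v)=\log\norm{h_\rho(\gamma_v\rest{[0,s]})}$ exactly as in Proposition~\ref{prop:lyap}: by the Lipschitz property \eqref{eq:lipschitz} one has $H_s(v)\lesssim \length(\gamma_v\rest{[0,s]})+C = s+C$, so $H_s\in L^1(m)$ trivially. Kingman's subadditive ergodic theorem then gives a constant $\chi_{\rm geodesic}(\rho)$ with $\frac1s H_s(v)\to\chi_{\rm geodesic}(\rho)$ for $m$-a.e.\ $v$, hence for a.e.\ $v\in T^1_\star X$ since the Liouville measure disintegrates with absolutely continuous conditionals on each fibre $T^1_xX$. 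The independence of $\norm{\cdot}$ follows as before: two Lipschitz normalized families differ, on closed loops and on long paths, by a term $\log\norm{ds_p/ds_q}_\infty=d_\hh(p,q)+O(1)$ which is $O(1)$ along a path staying in a compact part and is controlled by $d_X(\gamma_v(s),\star)=o(s)$ in general.

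The second, and main, point is the identification $\chi_{\rm geodesic}(\rho)=\chi_{\rm Brown}(\rho)$. The cleanest route is to compare the geodesic cocycle and the Brownian cocycle using the \emph{shadowing} already invoked in the proof of Proposition~\ref{prop:lyap2}: a $W_\star$-a.e.\ Brownian path $\omega$ has a limit $\omega_\infty\in\partial\hh^2$, and, lifting to $\hh^2$, satisfies $d_\hh(\omega(t),\gamma_{x,\omega_\infty}(t))=O(\log t)=o(t)$ by Ancona's theorem; moreover the law of $\omega_\infty$ on $\partial\hh^2\cong T^1_0\hh^2$ is the harmonic (Poisson) measure, which is absolutely continuous with respect to Lebesgue, and this descends to an absolutely continuous law for the associated $v\in T^1_\star X$. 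Now project to $M_\rho$: by the Lipschitz property, $\bigl|\log\norm{h_\rho(\omega\rest{[0,t]})}-\log\norm{h_\rho(\gamma_{\star,v}\rest{[0,t]})}\bigr|\le C_\rho\, d_X(\omega(t),\gamma_{\star,v}(t))=o(t)$ (one must also absorb the $o(t)$ excursions of $\gamma_{\star,v}$ into the cusps, exactly the Sullivan $d_X(\star,\gamma_v(t))=O(\log t)$ estimate used in Proposition~\ref{prop:lyap2}). Hence $\frac1t\log\norm{h_\rho(\gamma_{\star,v}\rest{[0,t]})}\to\chi_{\rm Brown}(\rho)$ for $v$ in a set of positive Lebesgue measure in $T^1_\star X$; since by the Kingman step above this limit is the a.e.-constant $\chi_{\rm geodesic}(\rho)$, the two coincide.

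The replacement of $\gamma_v\rest{[0,t]}$ by the closed loop $\widetilde\gamma_{v,t}$ is then handled exactly as in \eqref{eq:omega tilde}: by the Lipschitz property and the normalization of the family of metrics, $\frac1t\bigl|\log\norm{h_\rho(\widetilde\gamma_{v,t})}-\log\norm{h_\rho(\gamma_v\rest{[0,t]})}\bigr|\lesssim \frac1t d_X(\gamma_v(t),\star)$, and Sullivan's theorem \cite{sullivan disjoint spheres} gives $d_X(\gamma_v(t),\star)=O(\log t)=o(t)$ for a.e.\ $v$, so the two limits agree; note also that on the closed loop $\norm{h_\rho(\widetilde\gamma_{v,t})}=\norm{\rho(\widetilde\gamma_{v,t})}$ by the normalization condition, which yields the formulation with $\rho$ in place of $h_\rho$.

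I expect the main obstacle to be the non-compactness of $X$, which enters in two ways. First, one needs the Liouville measure to be genuinely finite and the geodesic flow ergodic \emph{and} the conditional measures on fibres $T^1_xX$ to be absolutely continuous — all classical but requiring care to state cleanly in the finite-volume setting; the integrability of the cocycle is mercifully automatic here thanks to \eqref{eq:lipschitz}, so the difficulties of Proposition~\ref{prop:lyap} do not recur. Second, and more seriously, the comparison argument relies on controlling cusp excursions of both Brownian and geodesic paths at scale $o(t)$: for Brownian paths this is Ancona/shadowing plus the fact that harmonic measure is non-atomic and absolutely continuous, and for the geodesic ray it is Sullivan's logarithmic law $d_X(\star,\gamma_v(t))=O(\log t)$ a.e. Bundling these estimates so that the $o(t)$ errors propagate correctly through the Lipschitz bound — in particular making sure the set of $v$ where the Brownian comparison applies has positive Lebesgue measure, so that it meets the (a.e.-defined) geodesic Lyapunov exponent — is the technical heart of the argument.
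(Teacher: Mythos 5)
Your overall mechanism --- shadowing of Brownian paths by geodesic rays, the Lipschitz property \eqref{eq:lipschitz} to transfer holonomy growth along paths at sublinear distance, and Sullivan's logarithm law to pass from $\gamma_v\rest{[0,t]}$ to the closed loops $\widetilde\gamma_{v,t}$ --- is exactly the paper's. But as written there is a genuine gap at the step where you claim almost-everywhere existence of the limit on the fibre $T^1_\star X$. Kingman's theorem applied to the geodesic flow with Liouville measure gives the limit for Liouville-a.e.\ $(x,v)\in T^1X$, hence, by disintegration, for a.e.\ base point $x$ and then a.e.\ $v\in T^1_xX$; it says nothing about one \emph{fixed} fibre, and $\star$ could a priori lie in the exceptional set of base points. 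So the inference ``for $m$-a.e.\ $v$, hence for a.e.\ $v\in T^1_\star X$ since the Liouville measure disintegrates with absolutely continuous conditionals'' is a non sequitur. This matters because in the next step you only extract a \emph{positive}-measure set of directions in $T^1_\star X$ from the Brownian comparison, and you lean on the (unjustified) a.e.\ statement on $T^1_\star X$ both to upgrade this to a.e.\ convergence and to identify the constant; as the proposal stands, the a.e.\ conclusion of the theorem is therefore not established.

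The repair is already contained in your own second paragraph, and it is precisely how the paper argues: the map $\omega\mapsto\omega_\infty$ pushes $W_\star$ forward to the harmonic measure $\nu_0$ on $\fr\hh^2$, and under the visibility identification $T^1_\star X\simeq \fr\hh^2$ this measure is mutually absolutely continuous with (indeed, lifting $\star$ to the centre $0$, equal to) the circle measure. Hence the full-$W_\star$-measure set of paths on which Proposition \ref{prop:lyap} and Ancona's shadowing hold pushes forward to a \emph{full}-Lebesgue-measure set of $v$ with $\unsur{t}\log\norm{h_\rho\big(\gamma_v\rest{[0,t]}\big)}\cv\chi_{\rm Brown}(\rho)$, not merely a positive-measure set. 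With this observation the Kingman/ergodicity step on $T^1X$ becomes superfluous (it could also be salvaged, e.g.\ by transferring the Liouville-a.e.\ statement to the $\star$-fibre through asymptoticity of geodesic rays sharing an endpoint and equivalence of the visibility measures from different base points, but that is extra work the theorem does not need); the remainder of your argument, including the Sullivan $O(\log t)$ estimate for the closing loops and the independence of the choice of Lipschitz family of metrics, then completes the proof exactly as in the paper.
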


We leave the reader   check that the same result holds for   geodesics rays
issued from generic $(x,v)\in T^1X$ (relative to the Liouville measure), that is, 
$\lim_{t\cv\infty} \unsur{t}\log\norm{ h_\rho\big({\gamma_{(x,v)}\rest{[0,t]}}\big)} = \chi_{\rm Brown}(\rho)$. 

\begin{proof}
The work was already done in Proposition \ref{prop:lyap2}.
 It follows from \eqref{eq:lipschitz} that if $\omega_1$ and $\omega_2$ are two paths on $X$ with $\omega_1(0) = \omega_2(0)$ and  such that the respective lifts $\widetilde\omega_1$ and $\widetilde\omega_2$ to $\hh^2$, 
starting from the same point, satisfy $d_\hh(\widetilde\omega_1(t), \widetilde\omega_2(t))=o(t)$, 
then  $$\lim_{t\cv\infty} \unsur{t} \abs{  \log\norm{h_\rho({\omega_1}(t))} - \log\norm{h_\rho({\omega_2}(t))}}=0.$$ As already said, 
generic  Brownian paths on $\hh^2$ are shadowed by    geodesic rays. That is, if $\omega$ is a Brownian path 
starting from $0\in \hh^2$, then $\lim_{t\cv\infty} \omega(t) = \omega_\infty$ exists in $\fr\hh^2$, and
$d_\hh(\omega(t), \gamma_{0,\omega_\infty}( t ) )= o(t)$, where 
 if $\gamma_{0,\omega_\infty}$ is the unit speed   geodesic ray joining $0$ and $\omega_\infty$.
 
To prove the theorem, we use this reasoning in the reverse direction. 
 Taking endpoints of semi-infinite geodesics gives 
 the natural ``visibility'' identification between $T^1_0X$ and $\fr\hh^2$, sending circle measure on $T^1_0X$ to the 
 harmonic measure $\nu_0$ 
 associated to $0$ on $\fr \hh^2$. For $v\in T^1_\star X$, let $\omega_\infty(v)$ be the associated endpoint. For a.e. Brownian path $\omega$ conditioned to converge to $\omega_\infty(v)$ we have that 
$$\lim_{t\cv\infty} \unsur{t} \abs{  \log\norm{h_\rho ({\gamma_{v}} (t)) } - \log\norm{h_\rho({\omega }(t) )}}=0.$$ Now by Proposition \ref{prop:lyap}, for $W_\star$ a.e. $\omega$ we have that 
$\lim \unsur{t}\log\norm{h_\rho({\omega }(t))}  = \chi_{\rm Brown}$. Hence  
 $\lim \unsur{t}   \log\norm{h_\rho({\gamma_{v}}(t))} = \chi_{\rm Brown}(\rho)$. To deal with the loops $ \widetilde\gamma_{v,t}$, as in Proposition \ref{prop:lyap2} we use Sullivan's result that almost surely, $d_X(\gamma_v(t), \star)  =O(\log t)$. The proof is complete. 
\end{proof}

\begin{rmk}
In \cite{bonatti gomez mont viana}, Bonatti, Gomez-Mont and Viana study $\mathrm{PSL}(n,\cc)$ cocycles over hyperbolic and symbolic dynamical systems, with applications to suspensions of fundamental groups of compact surfaces. In particular they show, 
 that if $\rho: \pi_1(S)\cv \PSL$ is a non-elementary representation, then the foliated geodesic flow of 
 $M_\rho$ admits non-zero Lyapunov exponents (i.e. $\chi_{\rm geodesic}(\rho)>0$).  We see that the 
 above methods (discretization of Brownian motion, etc.) give a new approach to this result, as it was    
 recently observed by Alvarez \cite{alvarez cras} (for compact $S$).  
The novelty here is that  we are able to deal with parabolic representations on Riemann surfaces of finite type as well. 
\end{rmk}

\section{Equidistribution in parameter space}\label{sec:equidist}

Let as before $X$ be a structure of  Riemann surface of finite type on $S$. Identifying $(\widetilde X, \star)$ with $(\hh, 0)$
 makes $G = \pi_1(S, \star)$ 
isomorphic to a lattice $\Gamma\leq \mathrm{PSL}(2,\rr)$.  From now on, we  consistently  identify $G$ and $\Gamma$. We endow $\Gamma$ with the metric structure induced by $\hh$ on $\Gamma\cdot 0$, that is 
for  $\gamma\in \Gamma$ we put $d(\gamma)  = d_\hh(0, \gamma 0)$; observe that 
$d(\gamma)\sim 2 \log \norm{\gamma}$.
 Notice that when $X$ is not compact, this distance is {\em not} quasi-isometric to  word-length in $G$. We also put 
 $B_\Gamma( r) = \set{\gamma\in \Gamma, \ d(\gamma)<r)}$.
  
  \medskip
  
This metric structure 
gives rise to a natural notion of a random diverging sequence in $\Gamma$: fix a sequence $(r_n)_{n\geq 1}$ 
increasing to infinity and 
independently choose  $\gamma_n \in B_\Gamma ( r_n)$ relative to the counting measure. For technical reasons we will 
assume that the series $\sum_{n\geq 0} e^{-cr_n}$ converges for every $c>0$. The resulting  random sequences $(\gamma_n)$ will be qualified as {\em admissible}.  
  
   Let now $\La$ be a holomorphic family of parabolic representations  of $G$ into $\PSL$ (resp.  a holomorphic family of representations modulo conjugacy)  satisfying (R1, R2, R3).
For $t\in \cc$, we consider the   subvariety in $\La$ defined by   
$$Z(\gamma, t) = \set{\la\in \La, \ \tr^2( \rho_\la({\gamma})) = t}$$ (which is viewed as a variety, that is, possibly with multiplicity), and the associated integration current $[ Z(\gamma, t)]$. By convention if $Z(\gamma, t)  = \La$, we put  
$[ Z(\gamma, t)] =0$. 

Our purpose in this section is to establish the following theorem. 
  
\begin{thm}\label{thm:equidist}
Let $\Gamma$ be a torsion-free lattice in $\mathrm{PSL}(2, \rr)$, and 
 $(\rho_{\la})_{\la\in \La}$ be a holomorphic family of parabolic 
representations of $\Gamma$ into $\PSL$ satisfying (R1, R2, R3). Let  
$T_{\bif} = dd^c\chi_{\rm Brown}$ be the  natural bifurcation current, associated to Brownian motion on $X = \Gamma\setminus \hh$. 
If  $(\gamma_n)$ is an admissible  random sequence  in $\Gamma$, then almost surely the following convergence holds:
$$\unsur{2d(\gamma_n)} \left[ Z(\gamma_n, t)\right] \underset{n\cv\infty}\longrightarrow   \tbif.$$ 
 
\end{thm}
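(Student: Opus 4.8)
The plan is to deduce Theorem \ref{thm:equidist} from the corresponding ``fixed representation'' statement, Theorem \ref{theo:counting}, via the standard compactness machinery for plurisubharmonic functions, exactly as sketched after the statement of Theorem \ref{theo:equidist}. First I would set up the relevant psh potentials: for a group element $\gamma$ with $d(\gamma)$ large, define
\begin{equation}\label{eq:pot}
u(\gamma, \la) = \unsur{2d(\gamma)} \log\abs{\tr^2(\rho_\la(\gamma)) - t}.
\end{equation}
Each $u(\gamma, \cdot)$ is psh on $\La$ (since $\la\mapsto \tr^2(\rho_\la(\gamma))$ is holomorphic), and $dd^c_\la u(\gamma, \la) = \unsur{2d(\gamma)}[Z(\gamma, t)]$. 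So the theorem amounts to showing that almost surely $u(\gamma_n, \cdot) \to \chi_{\rm Brown}(\rho_\cdot)$ in $L^1_{\rm loc}(\La)$, after which $dd^c$ passes to the limit by continuity of $dd^c$ on currents.

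The core analytic input is Theorem \ref{theo:counting}, which gives almost sure \emph{pointwise} convergence $u(\gamma_n, \la) \to \chi_{\rm Brown}(\rho_\la)$ for each \emph{fixed} $\la$. To upgrade pointwise to $L^1_{\rm loc}$ convergence I would proceed in two steps. (1) A locally uniform upper bound: by Corollary \ref{cor:compar}, $\norm{\rho_\la(\gamma)} \leq \norm{\rho_{\rm can}(\gamma)}^{\beta_\rho}$ with $\beta_\rho$ locally uniform on $\mathrm{Hom}_{\rm par}$, and since $2\log\norm{\rho_{\rm can}(\gamma)} \sim d(\gamma)$ this forces $\limsup_n \sup_{\la\in K} u(\gamma_n, \la) \leq \beta$ for some constant depending on the compact set $K$; hence the family $\set{u(\gamma_n, \cdot)}$ is locally uniformly bounded above, and (being psh and not $\equiv -\infty$, which holds since $\chi_{\rm Brown} > 0$ is a pointwise limit) is relatively compact in $L^1_{\rm loc}$. (2) Identification of the limit: along any $L^1_{\rm loc}$-convergent subsequence $u(\gamma_{n_k},\cdot) \to u$, Hartogs' lemma gives $u(\la) \leq \limsup u(\gamma_{n_k}, \la)$ at every $\la$, while general properties of psh limits give $\limsup_k u(\gamma_{n_k}, \la) \leq u(\la)$ for a.e.\ $\la$; combined with the pointwise convergence from Theorem \ref{theo:counting} (applied at a countable dense set of parameters, or at a.e.\ parameter, using Fubini to swap the full-measure set of paths with the full-measure set of parameters) this pins down $u = \chi_{\rm Brown}(\rho_\cdot)$ on a dense set, hence everywhere since both sides are psh. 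Since the limit is independent of the subsequence, the whole sequence converges.

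One point requiring care is the interchange of ``almost every path'' and ``almost every parameter'': Theorem \ref{theo:counting} produces, for each fixed $\rho_\la$, a full-measure set of admissible sequences; to get a single full-measure set of sequences that works simultaneously at a countable dense family $(\la_j) \subset \La$ one applies it at each $\la_j$ and intersects. Density of $(\la_j)$ together with the psh compactness above then suffices. A second technicality is that $Z(\gamma_n, t)$ might equal all of $\La$ (so that $u(\gamma_n,\cdot)\equiv -\infty$); but this happens only if $\tr^2(\rho_\la(\gamma_n))\equiv t$, which by the non-triviality assumption (R1) and the fact that trace functions separate points on the character variety can occur for at most countably many pairs $(\gamma_n, t)$, and in any case only for finitely many $n$ almost surely (otherwise $u(\gamma_{n_k},\la_j)$ could not converge to the finite value $\chi_{\rm Brown}(\rho_{\la_j})$), so it does not affect the limit.

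The main obstacle is really Theorem \ref{theo:counting} itself, but that is assumed proved; within the \emph{present} deduction, the delicate step is verifying the locally uniform upper bound \textbf{and} the ``$\limsup \leq$ a.e.'' half of the limit identification with enough uniformity to conclude $L^1_{\rm loc}$ (as opposed to merely pointwise a.e.) convergence — in other words, making sure the psh compactness argument is not circular and genuinely promotes pointwise convergence on a dense set to $L^1_{\rm loc}$ convergence of the whole sequence. This is where one invokes the standard fact that a locally bounded-above sequence of psh functions converging pointwise on a dense set to a psh function converges in $L^1_{\rm loc}$ to that function; I would cite this and the continuity of $dd^c$ and then conclude.
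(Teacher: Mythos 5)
Your overall plan coincides with the paper's: pass to the psh potentials $u(\gamma_n,\cdot)$, obtain the locally uniform upper bound from Corollary \ref{cor:compar}, feed in the fixed-parameter almost sure convergence (for admissible sequences in $\Gamma$ this is Corollary \ref{cor:fixed param}, the lattice-counting analogue of Theorem \ref{theo:counting}), and conclude by compactness of psh functions. The genuine gap is in the step where you resolve the interchange of ``almost every sequence'' and ``almost every parameter''. The route you actually single out as sufficient --- apply the fixed-parameter statement at a countable dense family $(\la_j)$, intersect the full-measure sets of sequences, and invoke density plus psh compactness --- does not work. A countable set is pluripolar (and Lebesgue-null), and for a subsequential $L^1_{\rm loc}$ limit $v$ of the psh functions $u(\gamma_{n_k},\cdot)$ the correct inequalities are $\limsup_k u(\gamma_{n_k},\la)\le v(\la)$ \emph{everywhere} and $v(\la)\le \limsup_k u(\gamma_{n_k},\la)$ only \emph{outside a pluripolar set} (you state these with the directions exchanged); since that exceptional pluripolar set may contain every $\la_j$, pointwise convergence at the $\la_j$ only yields $\chi_{\rm Brown}\le v$ and cannot identify $v$. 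Nor is ``equal on a dense set, hence everywhere'' valid for psh functions: choose $w$ psh, locally $\le 0$, with $w=-\infty$ on the countable dense set but $w\not\equiv-\infty$ (possible precisely because countable sets are polar); then $e^{w}$ is psh, vanishes on the dense set, yet is positive a.e., and $u_n=\max\lrpar{w/n,-1}$ converges to $-1$ at every point of the dense set while converging to $0$ in $L^1_{\rm loc}$. So convergence on a countable dense set of parameters simply does not pin down the $L^1_{\rm loc}$ limit.

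The fix is the alternative you mention only in passing, and it is exactly the paper's argument: the set $E\subset\La\times\Gamma^{\nn}$ of pairs $(\la,\boldsymbol{\gamma})$ for which $u(\gamma_n,\la)\cv\chi_{\rm Brown}(\rho_\la)$ is measurable and, by Corollary \ref{cor:fixed param}, has full $d\la\otimes m$ measure, where $m=\prod m_n$ is the product of the normalized counting measures; Fubini then gives, for $m$-a.e.\ sequence, convergence for Lebesgue-a.e.\ $\la$. With the locally uniform upper bound from Corollary \ref{cor:compar}, any subsequential $L^1_{\rm loc}$ limit $v$ agrees with $\limsup_n u(\gamma_n,\cdot)$ outside a pluripolar (hence Lebesgue-null) set, so $v=\chi_{\rm Brown}$ a.e.\ and therefore everywhere, since psh functions equal a.e.\ coincide; as the limit is independent of the subsequence, the whole sequence converges in $L^1_{\rm loc}$ and $dd^c$ passes to the limit. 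Your handling of the degenerate case $Z(\gamma_n,t)=\La$ and of the upper bound is fine; only the identification step needs to be run through the a.e.-in-$\la$ (Fubini) statement rather than through a countable dense family.
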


As already said in the introduction,
    this result  cannot be deduced from the equidistribution results in \cite{kleinbif}.  
 The main step of the proof, achieved in \S  \ref{subs:transport},
 is to show  that if $\rho\in   \mathrm{Hom_{par}}(\Gamma, \PSL)$ is a {\em fixed} representation, 
 then the  exponential growth rate of  $\tr^2 ( \rho({\gamma_n}))$ is  a.s. given by 
 $\chi_{\rm Brown}(\rho)$. 
 This is based on  the detailed understanding of the  asymptotic  properties of random product of matrices 
 associated to discretized Brownian motion. 
 
 Then in \S\ref{subs:subharmonic} we  use Fubini's theorem and 
  subharmonicity  to get the  almost sure 
 $L^1_{\rm loc}$ convergence of  
 $\la\mapsto   \log\abs{\tr^2( \rho_\la({\gamma}_n)) - t}$. 
 From this,  Theorem \ref{thm:equidist} immediately follows.
This simple argument was suggested to us by  D. Chafa\"i. Notice that the same approach leads to a simplification of the almost sure 
 equidistribution theorems in 
\cite{kleinbif} as well (see Theorems B and 4.1 there). 
 
 In \S\ref{subs:models} we show that this result can be nicely interpreted in terms of the holonomies of 
 random sequences of closed geodesics on $X$ (Theorems \ref{theo:counting} and \ref{theo:equidist}).

\subsection{Distribution of traces in $\rho(\Gamma)$}\label{subs:transport} 
Let $\Gamma$ be a lattice in  $\mathrm{PSL}(2, \rr)$. If $\Gamma$ is torsion-free and $\rho$ is a non-elementary 
type-preserving representation of 
$\Gamma$ into $\PSL$,  the results of   \S\ref{sec:brownian} 
enable to    define the natural Lyapunov exponent $\chi_{\rm Brown}(\rho)$. If now 
$\Gamma$ has torsion, by the Selberg Lemma, we   pick a finite index torsion-free subgroup $\Gamma'\leq \Gamma$, and  
define $\chi_{\rm Brown}(\rho)$  to be equal to $\chi_{\rm Brown}(\rho\rest{\Gamma'})$. By Proposition \ref{prop:covering},  the value of the Lyapunov exponent does not depend on the choice of $\Gamma'$. 
 
Our main result in this paragraph is the following.

\begin{thm}\label{thm:counting}
Let $\Gamma$ be a   lattice in $\mathrm{PSL}(2, \rr)$, and   $\rho$ be a non-elementary 
 parabolic  representation of $\Gamma$.  
Then for every $\e>0$ there exists $c(\e)>0$ such that for large enough $r>0$,
$$\# \set{\gamma\in   B_\Gamma( r), \ \abs{ 
\unsur{{2r}}\log \abs{\tr^2 (\rho(\gamma))}   - \chi_{\rm Brown}(\rho) }>\e}
\leq e^{(1-c(\e))r}.$$
\end{thm}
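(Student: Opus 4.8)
We may assume $\Gamma$ is torsion-free: a routine reduction using Proposition~\ref{prop:covering} (which says $\chi_{\rm Brown}$ is unchanged under finite coverings) together with a coset decomposition allows one to pass to a finite-index torsion-free subgroup without affecting the exponential bound. Let $\mu$ be the discretization measure of \S\ref{subs:discretization}, which satisfies the moment condition (M) by Proposition~\ref{prop:moment}, and identify $G=\pi_1(X)$ with $\Gamma=\rho_{\rm can}(G)$. Recall from \S\ref{subs:proof proof} that $T_n/n\to\tau$ for $W_{S_R}$-almost every path, for some $\tau>0$, and hence that $\chi_\mu(\rho)=\tau\,\chi_{\rm Brown}(\rho)$; the same comparison (drift $1$ of Brownian motion on $\hh$ and $d_\hh(r_n 0,\omega(T_n))=R$) gives $d(r_n)=d_\hh(0,r_n 0)\sim\tau n$ almost surely, with exponential large deviations $\pp\lrpar{\abs{d(r_n)-\tau n}>\delta n}\leq e^{-J(\delta)n}$ for every $\delta>0$ — a consequence of \eqref{eq:davies} and of the exponential moment of the stopping time $T$ (Proposition~\ref{prop:exponential moment stopping time}). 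The idea is to bound the number of ``bad'' $\gamma$ by transporting the count to the random walk of law $\mu$.

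Two estimates are needed. First, a \emph{hitting estimate}: there is $c_0>0$, uniform in $\gamma$, with $\pp\lrpar{\exists\,n\ge1,\ r_n=\gamma}\geq c_0\,e^{-d(\gamma)}$ for every $\gamma\in\Gamma$. Indeed, the probability that Brownian motion on $\hh$ issued from $S_R$ ever meets $B(\gamma 0,r)$ equals, up to bounded factors, the ambient hyperbolic Green's function $G_\hh(0,\gamma 0)$, which is $\asymp e^{-d(\gamma)}$ with constants depending on nothing since it is a function of the distance alone; and conditionally on entering $B(\gamma 0,r)$, the first excursion issued from $\partial B(\gamma 0,r)$ towards $\partial_R$ is ``good'', and therefore records $\gamma$ among the $r_n$, with probability $p>0$, by Lemma~\ref{lem:barrier} and $\Gamma$-equivariance. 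Second, a \emph{large deviation estimate for traces}: since $\mu$ obeys (M) and $\rho(\Gamma)$ is non-elementary — hence $\rho$ is strongly irreducible and proximal on $\pu$ — the random products $\rho(g_n\cdots g_1)$, with $g_i$ i.i.d.\ of law $\mu$, satisfy for every $\delta>0$
$$\pp\lrpar{\abs{\unsur n\log\abs{\tr^2(\rho(g_n\cdots g_1))}-2\chi_\mu(\rho)}>\delta}\ \leq\ e^{-I(\delta)n},\qquad I(\delta)>0.$$
This combines Le Page's large deviation theorem for the norm with the standard fact, already exploited in \cite{kleinbif}, that the angle between the most and least expanded directions of a random product stays bounded away from zero outside an event of exponentially small probability, so that $\abs{\tr}$ and $\norm{\cdot}$ coincide at exponential scale with overwhelming probability.

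Now fix $\e>0$; since $\rho$ is non-elementary, $\chi:=\chi_{\rm Brown}(\rho)>0$, and we may assume $\e<\chi$. For an integer $k$ set
$$\mathcal B_k^r=\set{\gamma\in\Gamma:\ d(\gamma)\in[k,k+1)\ \text{and}\ \abs{\unsur{2r}\log\abs{\tr^2(\rho(\gamma))}-\chi}>\e},$$
so that we must bound $\sum_{k\le r}\#\mathcal B_k^r$. If $k\le(1-c_1)r$ for a small constant $c_1=c_1(\e)>0$, then crudely $\#\mathcal B_k^r\leq\#\set{\gamma:d(\gamma)\in[k,k+1)}\asymp e^k$, by the classical hyperbolic lattice-point count $\#B_\Gamma(r)\asymp e^r$. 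If $k\ge(1-c_1)r$, the normalizations $\tfrac1{2r}$ and $\tfrac1{2d(\gamma)}$ are comparable, and for $c_1$ small enough membership in $\mathcal B_k^r$ forces $\abs{\tfrac1{2d(\gamma)}\log\abs{\tr^2(\rho(\gamma))}-\chi}>\tfrac\e2$ (if $\abs{\tr(\rho(\gamma))}$ is not too small this uses $\abs{\log\abs{\tr^2(\rho(\gamma))}}\lesssim\beta_\rho\,d(\gamma)$, from Corollary~\ref{cor:compar} and $\abs{\tr(A)}\leq2\norm{A}$; if it is very small both normalized quantities are negative, hence far from $\chi>0$). Using the hitting estimate and a union bound over $n$,
$$\#\mathcal B_k^r\ \leq\ c_0^{-1}e^{k+1}\sum_{\gamma\in\mathcal B_k^r}\pp\lrpar{\exists\,n,\ r_n=\gamma}\ \leq\ c_0^{-1}e^{k+1}\sum_{n\ge1}\pp\lrpar{r_n\in\mathcal B_k^r}.$$
For $n$ with $\abs{n-k/\tau}$ of order $k$, already the event $\set{d(r_n)\in[k,k+1)}$ has probability $\leq e^{-Jk}$, and summing these $n$ costs only a polynomial factor in $k$; for $n$ with $\abs{n-k/\tau}\leq k/(10\tau)$ one has $k\asymp\tau n$ on that event, and the defining inequality of $\mathcal B_k^r$ (after changing normalization) yields $\abs{\tfrac1n\log\abs{\tr^2(\rho(r_n))}-2\chi_\mu(\rho)}>\delta$ for some $\delta=\delta(\e)>0$, an event of probability $\leq e^{-I(\delta)n}\leq e^{-c(\e)k}$ since $r_n$ has law $\mu^n$. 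Thus $\sum_n\pp(r_n\in\mathcal B_k^r)\leq\mathrm{poly}(k)\,e^{-c(\e)k}$, so $\#\mathcal B_k^r\leq\mathrm{poly}(k)\,e^{(1-c(\e))k}$ for $k\ge(1-c_1)r$. Summing over $k\le r$ gives a bound $C\,e^{(1-c_1)r}+\mathrm{poly}(r)\,e^{(1-c(\e))r}$, which is $\leq e^{(1-c(\e)/2)r}$ for $r$ large once $c(\e)$ is replaced by $\tfrac12\min(c_1,c(\e))$; this is the assertion.

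The crux of the argument is to establish the two quantitative inputs with constants \emph{uniform over the finite-volume surface} $X$: the hitting estimate must not deteriorate when $\gamma 0$ lies deep inside a cusp, which is why it is phrased through the ambient hyperbolic Green's function (a function of distance only), and the trace large deviation estimate rests on the exponential moment of the discretization measure, Proposition~\ref{prop:moment}, whose proof was the delicate part of \S\ref{sec:brownian} precisely because $X$ need not be compact. A secondary, bookkeeping, point is to keep track of the three comparable clocks $n$, $T_n$ and $d(r_n)$, equivalent up to large deviations of exponentially small probability.
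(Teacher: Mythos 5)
Your argument is correct in substance, but it takes a genuinely different route from the paper, so let me compare. The paper's proof is geometric: its key input is the \emph{perturbative} large deviation estimate for traces (Proposition \ref{prop:trace perturbee}), which lets a good endpoint $\omega(T_n)$ of the discretized walk certify the trace of \emph{every} lattice element within distance $\asymp \e n$ of it; combined with the heat-kernel area estimate (Lemma \ref{lem:heat}) showing that good endpoints at time $n\tau$ fill all but an exponentially small area of $B_\hh(0,n\tau)$, a Vitali covering and the Margulis count \eqref{eq:margulis} then bound the number of bad elements. You avoid the perturbative proposition altogether by transporting the count pointwise: each $\gamma$ is recorded by the discretized walk with probability $\gtrsim e^{-d(\gamma)}$. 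Your hitting estimate is sound: the probability of ever hitting $B(\gamma 0,r)$ equals $G_\hh(d(\gamma))/G_\hh(r)\asymp e^{-d(\gamma)}$ (a function of the distance alone, hence uniform even deep in a cusp), the first entry into that ball necessarily starts one of the excursions of the discretization, and by Lemma \ref{lem:barrier}, $\Gamma$-equivariance and the strong Markov property that excursion is good, hence records $\gamma$, with probability exactly $p$. A union bound over walk times, the non-perturbed trace large deviations of \cite[Cor.\ A.2]{kleinbif} (applicable thanks to Proposition \ref{prop:moment} and $\chi_\mu=\tau\chi_{\rm Brown}$), and large deviations for $d(r_n)$ then finish the count. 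This trades the delicate perturbation-of-traces argument and the covering geometry for a Green's function lower bound and some clock bookkeeping; it is a genuine simplification of the torsion-free case.

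Two points need repair. First, the on-range window $\abs{n-k/\tau}\leq k/(10\tau)$ is too generous: if $\unsur{n}\log\abs{\tr^2(\rho(r_n))}$ is $\delta$-close to $2\chi_\mu$ and $\tau n/d(r_n)\in[0.9,1.1]$, then $\unsur{2d(r_n)}\log\abs{\tr^2(\rho(r_n))}$ may still differ from $\chi_{\rm Brown}$ by roughly $\chi_{\rm Brown}/10$, so no $\delta(\e)>0$ exists once $\e$ is small compared with $\chi_{\rm Brown}$. Take instead a window of width $c_2(\e)k$ with $c_2(\e)$ small compared with $\e/\chi_{\rm Brown}$; the off-range times are then handled as you indicate (for $n$ comparable to $k$ by the displacement large deviations, for very small $n$ by a Chernoff bound using the exponential tail of $\mu$), and all constants are allowed to depend on $\e$. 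Second, the torsion case is \emph{not} a routine coset decomposition: badness of $\gamma' a_i$ cannot be read off from badness of $\gamma'\in\Gamma'$, precisely because $\tr^2(\rho(\gamma' a_i))$ may differ wildly from $\tr^2(\rho(\gamma'))$ --- this instability of the trace under bounded multiplication is the whole difficulty the theorem addresses, and it is why the paper modifies Proposition \ref{prop:trace perturbee} so as to allow a perturbation $a\in\Gamma$. Within your scheme the fix is easy: run the walk on a torsion-free finite-index subgroup $\Gamma'$ and, for each of the finitely many coset representatives $a_i$, invoke a large deviation estimate for $\unsur{n}\log\abs{\tr(\rho(r_n a_i))}$ with $a_i$ \emph{fixed} (the ``fixed $a$'' case in the proof of Proposition \ref{prop:trace perturbee}, essentially \cite[Thm.\ A.1]{kleinbif}); Proposition \ref{prop:covering} alone does not accomplish this.
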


The  delicate point in this theorem is the following: several times so far we have used the idea that closing a long geodesic or 
Brownian path  by some path of bounded or short length does not affect much the norm of the holonomy. There is no such result for the 
trace: if $\gamma$ (resp. $a$)  is a ``large" (resp. ``small") 
element in $\Gamma$,  $\tr^2(\rho(\gamma))$ may be very different from  
$\tr^2(\rho(\gamma a))$ (even for the identity representation). 

The idea of the proof is to rely on the very precise understanding of  generic  random products 
$\gamma_1\cdots \gamma_n$, where the $\gamma_i$ 
are chosen according  to the discretization measure $\mu$ defined in \S\ref{subs:discretization}. 
In Proposition \ref{prop:trace perturbee} we show that  
we can  perturb these elements in $\rho(\Gamma)$ by keeping track of  the traces. We then 
use the properties of Brownian motion to compare
 $\mu^n$ and the counting measure, so as to cover most of a large ball in $\Gamma$.
 

\medskip
 
Throughout the proof   $c(\e)$ stands 
for a quantity which may change from line to line, dependent on $\e$ (and $\rho$) but not on $n$.


Our first result does not depend on Brownian motion. 

\begin{prop}\label{prop:trace perturbee}
Let $\Gamma$ be as above and $\rho: \Gamma\cv\PSL$ be a non-elementary representation. 
Fix a probability measure $\mu$ on $\Gamma$, generating $\Gamma$ 
as a semi-group, and satisfying the exponential moment condition 
$\int_\Gamma \norm{\rho(\gamma)}^\alpha d\mu(\gamma)<\infty$ for some $\alpha>0$.  

Then  there exists a constant $c=c(\mu, \rho)>0$ such that for every $\e>0$, 
 when $n$ is large enough
\begin{equation}\label{eq:trace perturbee}
 \mu^n\lrpar{\set{\gamma, \ \exists a\in \Gamma \text{ s.t.  } \norm{a}\leq e^{ c  \e n}  \text{ and  } \abs{
\unsur{n} \log \abs{\tr \rho(\gamma a)} - \chi_\mu}>\e }} \leq \exp(-c\e n).
\end{equation}
\end{prop}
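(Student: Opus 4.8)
The plan is to exploit the standard theory of random matrix products under the exponential moment hypothesis, combined with large deviation estimates, to control both the norm $\norm{\rho(\gamma)}$ and the geometry of the Cartan decomposition of $\rho(\gamma)$ along $\mu^n$-typical words $\gamma$, and then to show that multiplying on the right by a bounded element $a$ cannot destroy the trace asymptotics unless $a$ happens to be very badly aligned with the expanding/contracting directions of $\rho(\gamma)$. First I would invoke the Le Page large deviation theorem: under the exponential moment condition $\int \norm{\rho(\gamma)}^\alpha\, d\mu(\gamma)<\infty$, for every $\e>0$ there is $c>0$ with $\mu^n(\{\gamma : |\tfrac1n\log\norm{\rho(\gamma)} - \chi_\mu| > \e\}) \leq e^{-cn}$ (and similarly for $\frac1n\log\frac{\norm{\rho(\gamma)v}}{\norm v}$ for a fixed unit vector $v$, which governs the trace). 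One should also use the regularity of the stationary measure $\nu$ on $\pu$ (Hölder continuity, positive Hausdorff dimension, or at least that $\nu$ gives exponentially small mass to shrinking neighbourhoods of a point — a consequence of non-elementarity plus the moment condition), and the fact that the most contracted direction of $\rho(\gamma_n\cdots\gamma_1)$ converges to a $\nu$-distributed limit at an exponential rate.

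The key steps, in order: (1) Write the Cartan decomposition $\rho(\gamma) = K_\gamma A_\gamma K'_\gamma$ with $A_\gamma = \mathrm{diag}(s_\gamma, s_\gamma^{-1})$, so $s_\gamma = \norm{\rho(\gamma)}$; record that $\tr\rho(\gamma a)$ is, up to bounded multiplicative error, $s_\gamma$ times the matrix coefficient of $\rho(a)$ between the expanding direction of $\rho(\gamma)$ (a point in $\pu$ close to $\nu$-typical) and the ``dual'' contracting direction. (2) Via the large deviation estimate, restrict to the event $|\tfrac1n\log s_\gamma - \chi_\mu| \le \e/2$, which has complement of mass $\le e^{-cn}$. (3) On this event, for the trace estimate to fail we would need the matrix coefficient $|\langle \rho(a)^* \cdot, \cdot\rangle|$ to be smaller than $e^{-\e n /2}$, i.e. the expanding direction of $\rho(\gamma)$ must lie within an $e^{-c'\e n}$-neighbourhood of the bad set determined by $a$; since $\norm{a}\le e^{c\e n}$, this bad set is a neighbourhood of controlled (polynomially-in-$\norm a$) size of a single point in $\pu$, hence of size $\lesssim e^{C c\e n}\cdot(\text{something small})$. (4) Use the exponential regularity of the law of the expanding direction (it converges to $\nu$, and $\nu$ charges such neighbourhoods by at most a power of their radius) together with a union bound over the possible $a$ with $\norm a \le e^{c\e n}$ (there are at most $e^{C c\e n}$ relevant directions, by discretizing $\pu$ at scale $e^{-c\e n}$) to conclude that the total $\mu^n$-mass of words for which \emph{some} admissible $a$ works badly is still $\le e^{-c\e n}$, after shrinking $c$. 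Combining (2) and (4) gives \eqref{eq:trace perturbee}.

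The main obstacle I anticipate is step (4): handling the \emph{existential} quantifier over $a$ uniformly. For a single fixed $a$ the estimate is routine, but one must ensure that summing over all $a \in \Gamma$ with $\norm a \le e^{c\e n}$ does not overwhelm the exponential gain — this is exactly why the constant $c$ multiplying $\e n$ in the bound $\norm a \le e^{c\e n}$ must be taken small relative to the large-deviation rate and to the regularity exponent of $\nu$. One needs a quantitative statement of the form: the number of elements of $\Gamma$ in a ball of radius $\log \norm a$ is at most exponential in $\log\norm a$ (polynomial growth of $\Gamma\cdot 0$ is false — it is exponential, so this is genuinely a constraint), and the ``bad neighbourhood'' of $\nu$ for each such $a$ has $\nu$-measure at most $\norm{a}^{-\theta}$ for some $\theta>0$ coming from Hölder regularity of $\nu$. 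Balancing $e^{c\e n}$ (cardinality) against $e^{-\theta c\e n}$ (measure of each bad set) forces a restriction $c < \theta/(1+\text{growth rate})$ or so, and also a compatibility with the large-deviation rate from step (2); provided all constants are chosen in this order the proof closes. A secondary technical point is the passage from $\norm{\rho(\gamma a)}$ (easy to control, since $\norm{a}$ is subexponential with small rate) to $|\tr\rho(\gamma a)|$, which requires knowing that the expanding and contracting directions of $\rho(\gamma)$ are, with exponentially high $\mu^n$-probability, uniformly transverse — again a consequence of non-elementarity and the regularity of $\nu$.
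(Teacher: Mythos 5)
Your plan follows the same overall strategy as the paper's proof: a large deviation estimate for $\unsur{n}\log\norm{\rho(\gamma_1\cdots\gamma_n)}$ under the exponential moment hypothesis, a regularity property of the unique stationary measure on $\pu$ to rule out an exponentially small trace, and a union bound over the $e^{O(c\e n)}$ elements $a$ with $\norm{a}\leq e^{c\e n}$, with $c$ chosen small relative to the per-element deviation rate (the paper takes $c=c_1/2$). Where you differ is in the packaging of the step ``trace comparable to norm'': you use the Cartan decomposition and express $\abs{\tr\rho(\gamma a)}$ as $\norm{\rho(\gamma)}$ times a matrix coefficient of $\rho(a)$ between the singular directions of $\rho(\gamma)$, whereas the paper uses the identity $\frac12\log\abs{\tr^2 g-4}=\log\norm{g}+\log\delta(g)+O(1)$ ($\delta(g)=$ distance between the fixed points), splits the word at $m=\lfloor(1-\e)n\rfloor$ with $a$ absorbed into the frozen prefix $h=\rho(\gamma_m\cdots\gamma_1a)$, and controls $\delta$ via the attracting/repelling balls of $h$ and the stationary-measure regularity applied to the independent tail of length $\sim\e n$ (Lemma A.3 of \cite{kleinbif}). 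These are two renderings of the same mechanism, and your route is workable, but two points need care. First, your ``bad set'' for the expanding direction depends not only on $a$ but also on the second (most-expanded input) singular direction of $\rho(\gamma)$, which is itself random; to invoke regularity of the law of the expanding direction you must condition on the beginning of the word --- precisely what the paper's splitting at $m$ accomplishes. Second, the bookkeeping in your final paragraph is off: the bad neighbourhood attached to $a$ does \emph{not} have $\nu$-measure $\lesssim\norm{a}^{-\theta}$; a large $a$ only enlarges it, by a factor polynomial in $\norm{\rho(a)}$ coming from the Lipschitz distortion of its action on $\pu$. The exponential gain must come from the deviation threshold $e^{-\e n/2}$ itself, giving a per-element bound of order $e^{-\theta(1/2-O(c))\e n}$ (your step (3) has this correct version); balancing this against the $e^{O(c\e n)}$ candidates for $a$ and shrinking $c$ closes the argument exactly as in the paper.
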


We note that in this statement 
 we could replace $\Gamma$ by an abstract finitely generated group,  by replacing the condition 
``$\norm{a}\leq e^{ c  \e n}  $" in \eqref{eq:trace perturbee} by ``$\length(a)\leq c\e n$". 

\begin{proof} 
Without the perturbative term $a$, this was proven in \cite[Cor. A.2]{kleinbif} (see also \cite{aoun}). 
It turns out that for fixed $a$, the proof is essentially identical to that of  Theorem A.1 in \cite{kleinbif}, 
which we briefly reproduce for convenience. For a sequence 
$\boldsymbol{\gamma} \in \Gamma^\nn $, we put 
$\ell_n(\boldsymbol{\gamma}) = \gamma_1\cdots \gamma_n$. We recall   that if $g\in \PSL$ has large norm, 
 there exists two balls $A(g)$ and $R(g)$ 
 (the attracting and repelling balls respectively) on $\pu$, of diameter about $\unsur{\norm{g}}$, 
such that $g(R(g)^c) = A(g)$. We also recall that for $g\in \PSL$,
$$\unsur{2}\log {\abs{\tr^2(g) -4}} = \log  \norm{g} + \log \delta(g) + O(1),$$ 
where $\delta(g)$ is the distance between the fixed points of $g$. Thus to control the trace from the norm it is enough to control the distance between fixed points. 

Fix $\e>0$, small with respect to $\chi$.  
Fix $a\in \Gamma$, with $\norm{a}\leq  \frac{\e}{2}n$.
 We have to estimate the probability that for large $n$
$\unsur{n} \log \tr^2(\rho (\ell_n(\boldsymbol{\gamma})a))\notin[\chi_\mu-\e, \chi_\mu+\e]$. 
By the large deviations theorem for the distribution of the norms \cite[\S V.6]{bougerol lacroix}
 we infer that outside a set of 
exponentially small probability, 
$\abs{\unsur{n} \log \norm{\rho (\ell_n(\boldsymbol{\gamma}))} -\chi_\mu} <\frac{\e}{4}$. Since 
$\norm{a}\leq e^{\frac{\e}{2}n}$, we get that with high probability, 
 $$\abs{\unsur{n} \log \norm{\rho (\ell_n(\boldsymbol{\gamma})a)} -\chi_\mu} <\frac{3\e}{4}.$$ Therefore we need to show that the 
 probability that $\delta(\rho (\ell_n(\boldsymbol{\gamma})a)) < e^{-\frac{\e}{4} n}$ decreases like $e^{-c(\e)n}$. 
 
 \medskip

For this,  let $m=\lfloor (1-\e)n \rfloor$. 
We split $\ell_n(\boldsymbol{\gamma})a$ as $\ell_n(\boldsymbol{\gamma})a = 
(\gamma_n\cdots \gamma_{m+1}) (
\gamma_{m}\cdots \gamma_1 a)$. 
By the  large deviations theorem for the norms, and the facts that 
$\norm{a}\leq e^{\frac{\e}{2}n}$ and $m=\lfloor (1-\e)n \rfloor$,  we  
get that 
$$ \mathbb P \Big( \Big| \frac{1}{m} \log \norm{ \rho (\ell_m(\boldsymbol{\gamma})a )} -\chi_\mu \Big| \geq  \varepsilon  \Big)+\mathbb P \Big( \log \norm{\rho(\gamma_n \ldots \gamma_{m+1}) } \geq  2 \chi_\mu (n-m) \Big) \leq e^{-c(\e)n}.
$$
 By independence, we may assume that $\gamma_1 , \ldots, \gamma_m$ are fixed and satisfy
 $\abs{ \frac{1}{m} \log \norm{ \rho (\ell_m(\boldsymbol{\gamma})a )} -\chi_\mu }< \varepsilon$
 and estimate the  probability that 
 $\delta(\rho (\ell_n(\boldsymbol{\gamma})a)) < e^{-\frac{\e}{4} n}$ 
among the  $\gamma_{m+1},\ldots , \gamma_n$ satisfying 
$\log \norm{\rho(\gamma_n \ldots \gamma_{m+1}) } <  2 \chi_\mu (n-m) \sim 2\chi n\e $. 
Put $h = \rho  (\ell_m(\boldsymbol{\gamma})a )$ and consider the balls $A(h)$ and $R(h)$. If we let 
$$\widetilde{A}( \rho (\ell_n(\boldsymbol{\gamma})a ) = \rho (\gamma_n\cdots \gamma_{m+1}) A(h) \text{ and } 
\widetilde{R}( \rho (\ell_n(\boldsymbol{\gamma})a )=R(h)$$ then by definition we have that 
$$\rho (\ell_n(\boldsymbol{\gamma})a)\left (\widetilde{R}( \rho (\ell_n(\boldsymbol{\gamma})a )^c\right) = 
\widetilde{A}( \rho (\ell_n(\boldsymbol{\gamma})a ) 
$$ and the diameters of 
${\widetilde{A}( \rho (\ell_n(\boldsymbol{\gamma})a )} $  and 
${\widetilde{R}( \rho (\ell_n(\boldsymbol{\gamma})a )}$ are   not greater than $e^{-(\chi_\mu + O(\e))n}$.
So if we can show that these balls are separated by a distance $\geq e^{-\frac{\e}{4} n}$,   we then conclude that 
 $\rho (\ell_n(\boldsymbol{\gamma})a )$ is loxodromic with a fixed point in each ball, so the distance between its fixed points is 
 $\geq e^{-\frac{\e}{4} n}$ and we are done. But if this does not happen, this means that $\rho(\gamma_n\cdots \gamma_{m+1})$ maps the ball $A(h)$ very close (i.e. at distance $\leq e^{-\frac{\e}{4} n}$)  to the ball $R(h)$. Since   $h$ is fixed here, the probability of this event is governed by the properties of the unique stationary   measure  on $\pu$, relative 
 to the action of $(\rho(\Gamma), \mu)$. It follows that this probability is of order $e^{-\theta \e n}$ for some $\theta= \theta(\rho)>0$ (see \cite[Lemma A.3]{kleinbif} for details) and the desired result follows. 
 
\medskip

So far we have shown that there exists $c_1>0$ (which we may assume is smaller than $\frac14$) such that 
 if $a$ is a fixed element in $\Gamma$ such that  $\norm{a}\leq e^{\frac{\e}{4}n}$, then 
$$\pp\lrpar{ \unsur{n} \log \abs{\tr \rho(\ell_n(\boldsymbol{\gamma})a)}\notin[\chi_\mu-\e, \chi_\mu+\e] }\leq e^{-c_1\e n}.$$ Since the number of elements of norm not greater  than  $e^{cn}$  is  equivalent to $e^{cn}$, we get that for $c_2\leq c_1$, 
$$\pp\lrpar{  \exists a\in \Gamma , \text{ s.t.  }    \norm{a} \leq e^{c_2 \e n}, \text{ and }
\abs{ \unsur{n} \log \abs{\tr \rho(\ell_n(\boldsymbol{\gamma})a)} - \chi_\mu}>\e } \leq 
e^{(c_2-c_1)n}. $$ By choosing $c_2   =  \frac{c_1}{2}$,
we conclude that \eqref{eq:trace perturbee} holds with   $c= \frac{c_1}{2}$. 
 \end{proof}

\begin{proof}[Proof of Theorem \ref{thm:counting}] Let us first assume that $\Gamma$ is torsion-free.
From now on  $\mu$ is the discretization measure for the  Brownian motion on $X$, as defined in \S \ref{subs:discretization}. 
We let  $S=S_R$ be the circle centered at $0$ and of radius $R$ and  work in the    probability space of paths issued from $S$ in $\hh$, endowed with the probability measure
 $W_{S} = \int_S W_xds(x)$. 
For a  generic such 
 Brownian path,   we
let $T = T(\omega)$ be the stopping time defining $\mu$, $T_n = T_n(\omega)$ be the sequence of times obtained by repeating the 
stopping process, and $\tau$ be the average stopping time, which is the almost sure limit $\tau =\lim \frac{T_n}{n}$ (see the proof of Theorem \ref{thm:support brownien}). Observe that 
it is enough to prove the theorem for the sequence of radii      $ r_n = {n\tau} $.  

\medskip

The first step is the following large deviations estimate, which will be proven afterwards. 

\begin{lem}\label{lem:large dev stopping time} 
Let $T_n(\omega)$ be as above. Then 
for large enough $n$ we have that 
$$W_{S} \lrpar{\set{\omega, \  \abs{\frac{T_n(\omega)}{n} - \tau}>\e} }\leq e^{-c(\e)n}.$$
\end{lem}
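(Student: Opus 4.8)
The plan is to exploit the representation $T_n(\omega) = \sum_{k=0}^{n-1} T(\theta_T^k \omega)$ established in the proof of Theorem \ref{thm:support brownien}, together with the ergodicity of the measure-preserving transformation $\theta_T$ on $(\Omega_S, W_S)$ and, crucially, the exponential moment bound $\ee_x(\exp(\alpha T)) \leq M$ for $x \in S_R$ coming from Proposition \ref{prop:exponential moment stopping time}. Since $\tau = \ee_{W_S}(T)$ and the summands $T(\theta_T^k\omega)$ form a stationary sequence with a finite exponential moment, this is a large deviations estimate for an ergodic average; the point is that exponential integrability is strong enough to get an exponential (rather than merely polynomial) decay rate, uniformly in the starting point on $S_R$.

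First I would treat the upper tail $\set{T_n/n > \tau + \e}$. Here the clean tool is the exponential Chebyshev inequality applied at a suitable small parameter $s > 0$: $W_S(T_n \geq (\tau+\e)n) \leq e^{-s(\tau+\e)n} \ee_{W_S}(e^{sT_n})$. The main work is to bound $\ee_{W_S}(e^{sT_n})$ by something like $C^n$ with $C = C(s) \to 1$ as $s \to 0$ in a controlled way. This I would do by conditioning successively: writing $T_n = T(\omega_0) + T(\theta_T\omega_0) + \cdots$, and using the strong Markov property at each stopping time $T_k$ together with the fact (from the discretization construction) that $\omega(T_k)$ is again distributed according to $ds$ on $S_R$, so that the conditional law of the next increment $T(\theta_T^k\omega)$ given $\mathcal F_{T_k}$ is dominated (in the sense of exponential moments) by $\sup_{x \in S_R}\ee_x(e^{sT}) \leq M(s)$, where $M(s) \to 1$ as $s \to 0$ by dominated convergence using the bound at $s = \alpha$. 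Chaining these gives $\ee_{W_S}(e^{sT_n}) \leq M(s)^n$, hence $W_S(T_n \geq (\tau+\e)n) \leq (e^{-s(\tau+\e)}M(s))^n$, and one checks that for $s$ small enough the base is $< 1$ (since $M(s) = 1 + s\tau + o(s)$, so $e^{-s(\tau+\e)}M(s) = 1 - s\e + o(s)$), giving the desired $e^{-c(\e)n}$.

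For the lower tail $\set{T_n/n < \tau - \e}$ the same scheme works with $-s$ in place of $s$: $W_S(T_n \leq (\tau-\e)n) \leq e^{s(\tau-\e)n}\ee_{W_S}(e^{-sT_n}) \leq (e^{s(\tau-\e)}\widehat M(s))^n$ where $\widehat M(s) = \sup_{x\in S_R}\ee_x(e^{-sT}) \leq 1$; expanding $\widehat M(s) = 1 - s\tau + o(s)$ shows the base is $1 - s\e + o(s) < 1$ for small $s$. Combining the two tails and taking $c(\e)$ to be the smaller of the two exponents finishes the proof. I expect the main obstacle to be the bookkeeping in the conditioning argument that yields $\ee_{W_S}(e^{\pm sT_n}) \leq M(\pm s)^n$ \emph{uniformly} over starting points on $S_R$; this is where the uniform-in-$x$ version of Proposition \ref{prop:exponential moment stopping time} and the self-reproducing property of the measure $ds$ under the discretization step are essential, and one must be a little careful that the strong Markov property is being applied at the genuine stopping times $T_k$ rather than at deterministic times.
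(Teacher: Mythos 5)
Your overall scheme (Chernoff bound plus successive conditioning at the stopping times $T_k$, using the uniform exponential moment from Proposition \ref{prop:exponential moment stopping time}) is reasonable, but there is a genuine gap at the step where you expand $M(s)=\sup_{x\in S_R}\ee_x(e^{sT})=1+s\tau+o(s)$. The constant $\tau$ is the almost sure limit of $T_n/n$, i.e.\ by Birkhoff's theorem the \emph{average} $\int_{S_R}\ee_x(T)\,ds(x)$, whereas the first-order term of $M(s)$ is $\sup_{x\in S_R}\ee_x(T)$. Since $x\mapsto \ee_x(T)$ has no reason to be constant on $S_R$ (the lattice $\Gamma$ destroys the rotational symmetry about $0$), in general $\sup_x\ee_x(T)>\tau>\inf_x\ee_x(T)$; and since Jensen gives $\log M(s)\ge s\,\sup_x\ee_x(T)$ for all $s>0$, your upper-tail exponent satisfies $-s(\tau+\e)+\log M(s)\ge s\left(\sup_x\ee_x(T)-\tau-\e\right)\ge 0$ whenever $\e<\sup_x\ee_x(T)-\tau$. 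So the sup-based chaining only controls deviations above $\sup_x\ee_x(T)$, not above $\tau$, and the lower tail has the symmetric problem with $\inf_x\ee_x(T)$. Nor can you simply replace the sup by the $W_S$-averaged moment generating function at each conditioning step: conditionally on $\mathcal F_{T_k}$ --- in particular on the previous increments --- the law of the starting point $\omega(T_k)$ of the next segment is \emph{not} $ds$ (the endpoint is $\mathcal F_{T_k}$-measurable and correlated with the durations), so the marginal stationarity you invoke cannot be used inside the conditional expectation.

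The missing ingredient, which is the route the paper takes, is that the sub-balayage construction gives much more than stationarity: for \emph{every} starting point $x\in S$, the $W_x$-law of $\omega(T)$ is exactly $ds$ (each successful excursion exits with law $p\,ds$ on the corresponding circle, by Lemma \ref{lem:barrier}). Viewing the trajectory as a Markov chain of path segments $(\omega_k)$ in $\Omega_S$ with kernel $p(\omega,\cdot)=W_{\omega(T(\omega))}$, this means $p(W_x,\cdot)=W_S$ for every $x\in S$, hence $p^2(\omega,\cdot)=W_S$ identically: the chain decouples after two steps, so $\omega_1,\omega_3,\omega_5,\dots$ (resp.\ $\omega_2,\omega_4,\dots$) are i.i.d.\ with law $W_S$. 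Consequently $T_n$ is, in law, the sum of two sums of i.i.d.\ variables distributed as $T$ under $W_S$, which have mean exactly $\tau$ and a finite exponential moment by Proposition \ref{prop:exponential moment stopping time}; the classical Cram\'er--Chernoff theorem applied to each half-sum, plus a union bound, gives the lemma. If you insert this two-step decoupling, your Chernoff computation goes through, but with the correct centering $\tau$ coming from genuinely i.i.d.\ increments rather than from a sup over starting points.
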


For fixed $n$, we now define a good set $G_n$  of paths starting from $S$ by specifying its complement $B_n$. 
Declare that the elements   $\gamma\in \Gamma$ appearing in \eqref{eq:trace perturbee} are bad. 
From the discretization procedure, this corresponds to a set  of  
 paths of exponentially small measure, relative to  $W_{S}$, which we denote by 
    $B_n^0$. 
Now by definition $\omega$ belongs to $B_n^1$  if   
$\abs{T_n(\omega) - n\tau}> \frac{c\tau}{2K}\e$ where $c$ is as in Proposition 
\ref{prop:trace perturbee} and $K$ is as in  Lemma \ref{lem:martingale}. 
By Lemma \ref{lem:martingale} and the strong Markov property  if $\omega\notin B_n^1$, the probability that
$d_\hh (\omega(T_n), \omega(n\tau))$ is larger than $\frac{c \tau}{2} n\e$ is exponentially small. 
We let $B_n^2$ be the corresponding  set of paths. 
Finally we put $B_n = B_n^0\cup B_n^1\cup B_n^2$, and $G_n = B_n^c$. Observe that $W_{S}(B_n)\leq e^{-c(\e)n}$.

We claim that if $\omega$ is good, that is $\omega\in G_n$, then for every $\gamma\in \Gamma\cap B_\hh (\omega(\tau n), \frac{c\tau}{2}n\e)$ we have that 
$$\abs{\unsur{n}\log \abs{\tr (\rho(\gamma))} - \chi_\mu(\rho)}<\tau \e 
\text{, i.e. }  
 \abs{\unsur{2\tau n}\log \abs{\tr^2 (\rho(\gamma))} - \chi_{\rm Brown} (\rho)}<\e$$ (recall from the proof of Theorem \ref{thm:support 
 brownien} that $\chi_\mu (\rho)= \tau \chi_{\rm Brown}(\rho)$). Indeed by Proposition \ref{prop:trace perturbee}, 
 if $\omega\notin B_n$, then the element $\alpha\in \Gamma$ 
 corresponding to $\omega(T_n)$  (that is,  such that $\omega(T_n)\in \alpha(S)$)  has the property that for every 
 $\beta \in B_\Gamma(\alpha, c\tau n\e)$, 
$\abs{\unsur{n}\log \abs{\tr (\rho(\beta))} - \chi_\mu(\rho)}<\tau \e$. Now, if $\omega\in G_n$, 
$\omega(\tau n)$ lies in  $B_\Gamma(\alpha, \frac{c\tau }{2}n\e)$, whence the result. 

\medskip

Let $E$ be the set of group elements $\gamma \in B_\Gamma\lrpar{\frac{n\tau}{2}}$ such that 
$$\abs{\frac{1}{2\tau n} \log\abs{\tr^2(\rho(\gamma))} -\chi_{\rm Brown}(\rho)}\geq \e.$$ To complete the proof of the theorem, we 
have to show that   $\# E   
\leq e^{-c(\e)n} \#B_\Gamma\lrpar{{n\tau}}$. For this, let $\widetilde G_n$ be the image of $G_n$ under Brownian motion 
at time $n\tau$, that is $\widetilde G_n = \set{\omega(n\tau), \ \omega\in G_n}$. 
The next lemma follows from classical estimates for  the heat kernel on $\hh$ (see below for the proof). 

\begin{lem}\label{lem:heat}
The hyperbolic area of $(\widetilde G_n)^c \cap B_\hh\lrpar{0, {n\tau}}$ is exponentially small with respect to the area of $B_\hh \lrpar{0, {n\tau}}$, that is, there exists $c(\e)>0$ such that for large $n$, 
$$\mathrm{Area} \left((\widetilde G_n)^c \cap B_\hh\lrpar{0, {n\tau}} \right)\leq \exp\lrpar{{n\tau} - c(\e) n}.$$
 \end{lem}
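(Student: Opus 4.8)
\textbf{Plan for the proof of Lemma \ref{lem:heat}.}
The statement compares the hyperbolic area of the ``bad'' set $(\widetilde G_n)^c$ inside $B_\hh(0,n\tau)$ with the area of the whole ball. Since $\mathrm{Area}(B_\hh(0,n\tau)) = 4\pi\sinh^2(n\tau/2) \asymp e^{n\tau}$, what we must show is that $\mathrm{Area}\lrpar{(\widetilde G_n)^c \cap B_\hh(0,n\tau)} \leq e^{n\tau - c(\e)n}$. The key input is that $W_S(B_n) \leq e^{-c(\e)n}$ (established just above), so $(\widetilde G_n)^c \cap B_\hh(0,n\tau)$ has small harmonic measure at time $n\tau$; we then need to convert this smallness with respect to the law of $\omega(n\tau)$ into smallness with respect to hyperbolic area. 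This is exactly where the precise heat kernel lower bound is used.

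First I would estimate, for a fixed point $y \in B_\hh(0,n\tau)$, the density of the law of $\omega(n\tau)$ at $y$ from below, where $\omega$ is Brownian motion started from $W_S$ (i.e. from the circle $S_R$ of fixed radius $R$). By the triangle inequality, $r := d_\hh(0,y) \leq n\tau$, and since the starting point lies on $S_R$ with $R$ fixed, the radial heat kernel estimate \eqref{eq:davies} gives a lower bound for the transition density $k(n\tau, y)$ of the form $k(n\tau,y) \gtrsim C \cdot n^{-A} \exp\lrpar{-\tfrac{(r - n\tau)^2}{4n\tau}}$ for suitable constants $A, C$ (the polynomial prefactors in \eqref{eq:davies} are all at worst polynomial in $n$ when $r \leq n\tau$ and $t = n\tau$, and I can absorb the displacement by $R$ into a constant). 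Now write $W_S(B_n) = \int_{(\widetilde G_n)^c} k(n\tau, y)\, d\vol(y) \geq \int_{(\widetilde G_n)^c \cap B_\hh(0,n\tau)} k(n\tau,y)\, d\vol(y)$. On this region, the worst (smallest) value of the lower bound for $k(n\tau,y)$ is attained near $r = 0$, where $\exp\lrpar{-(r-n\tau)^2/(4n\tau)} = \exp(-n\tau/4)$; hence $k(n\tau,y) \gtrsim C n^{-A} e^{-n\tau/4}$ uniformly on $B_\hh(0,n\tau)$. Therefore $e^{-c(\e)n} \geq W_S(B_n) \geq C n^{-A} e^{-n\tau/4}\, \mathrm{Area}\lrpar{(\widetilde G_n)^c \cap B_\hh(0,n\tau)}$, which rearranges to $\mathrm{Area}\lrpar{(\widetilde G_n)^c \cap B_\hh(0,n\tau)} \leq C^{-1} n^{A} e^{n\tau/4 - c(\e)n} \leq e^{n\tau - c(\e')n}$ for large $n$, using that $n\tau/4 < n\tau$ absorbs the polynomial factor and a slightly smaller constant $c(\e')$.

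The main obstacle, and the reason \eqref{eq:davies} is quoted in the preliminaries, is to make sure the exponential rate is genuinely better than $e^{n\tau}$: the Gaussian-type factor $\exp(-(r-t)^2/4t)$ with $t = n\tau$ degrades to $e^{-n\tau/4}$ precisely when $r$ is of order $0$, i.e. near the center of the ball, and this loss of $n\tau/4$ in the exponent is harmless because we only need to beat $e^{n\tau}$, not to be sharp. One should double-check that all the polynomial-in-$(1+r+t)$ and $(1+r^{-1})$ prefactors in \eqref{eq:davies} stay polynomially bounded (from below) on $B_\hh(0,n\tau)$ — the factor $(1+r^{-1})^{-1}$ does tend to $0$ as $r \to 0$, but it is integrable near $r=0$ in hyperbolic area (the volume element kills it), so one can either restrict to $r \geq 1$ where it is bounded below and note that $B_\hh(0,1)$ contributes bounded area, or argue directly with the integrated bound. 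Either way this is a routine check and not a real difficulty. I would also note in passing that a cruder argument suffices if one only wants the conclusion with some $c(\e) > 0$: one can avoid \eqref{eq:davies} entirely by covering $B_\hh(0,n\tau)$ with the balls $B_\hh(\gamma 0, R)$, $\gamma \in \Gamma$, projecting everything to the compact part together with the discretization structure, but the direct heat-kernel computation is cleanest and is what the authors have set up.
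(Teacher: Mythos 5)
There is a genuine error in your key step, even though the overall strategy (convert the probability bound $W_S(B_n)\leq e^{-c(\e)n}$ into an area bound via a pointwise lower bound on the density of the law at time $n\tau$) is sound and close to the paper's. The estimate \eqref{eq:davies} is the \emph{radial} density of the law $P_t(0,\cdot)$, i.e.\ the density of the random variable $d_\hh(0,\omega(t))$ with respect to $dr$; it is \emph{not} the transition density with respect to hyperbolic area. The two differ by the circumference factor $2\pi\sinh r\asymp e^{r}$: the density of $P_{n\tau}(x,\cdot)$ with respect to the area element at distance $r$ from the origin is $\asymp \mathrm{poly}(n)^{\pm 1}\exp\lrpar{-\tfrac{(r-n\tau)^2}{4n\tau}-r}$, not $\exp\lrpar{-\tfrac{(r-n\tau)^2}{4n\tau}}$. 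Consequently your displayed inequality $W_S(B_n)\geq C n^{-A}e^{-n\tau/4}\,\mathrm{Area}\lrpar{(\widetilde G_n)^c\cap B_\hh(0,n\tau)}$ is false: testing it with the full ball in place of the bad set (area $\asymp e^{n\tau}$, probability $\leq 1$) would give $1\gtrsim n^{-A}e^{3n\tau/4}$. Relatedly, your localisation of the worst case is wrong: for the area density the exponent $-\tfrac{(r-n\tau)^2}{4n\tau}-r$ is minimised on $[0,n\tau]$ at $r=n\tau$ (value $-n\tau$), not at $r=0$.

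The good news is that once corrected your route still proves the lemma, and in a slightly more direct way than the paper. Uniformly on $B_\hh(0,n\tau)$ the area density of the law at time $n\tau$ (started from any point of $S_R$; the bounded shift by $R$ is harmless) is $\gtrsim n^{-A}e^{-n\tau}$, so with $F=(\widetilde G_n)^c\cap B_\hh(0,n\tau)$ one gets $e^{-c(\e)n}\geq W_S(B_n)\geq \nu_{n\tau}(F)\gtrsim n^{-A}e^{-n\tau}\,\mathrm{Area}(F)$, hence $\mathrm{Area}(F)\lesssim n^{A}e^{n\tau-c(\e)n}\leq e^{n\tau-c(\e)n/2}$ for large $n$; the lemma only asks to beat $e^{n\tau}$ by an exponentially small factor, so losing the full $e^{-n\tau}$ in the density is harmless. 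By contrast, the paper works directly with the radial density \eqref{eq:davies}: it splits $F$ into $F\cap B_\hh(0,n\tau-n\delta)$, whose area is already $\lesssim e^{n\tau-n\delta}$, and the annulus $n\tau-n\delta\leq r\leq n\tau$, on which the radial density is $\gtrsim n^{-1/2}e^{-n\delta^2/4\tau}$, and finally optimises $\delta=\sqrt{\tau c(\e)}$; this splitting is exactly what compensates for the $\sinh r$ discrepancy that your write-up overlooked. So the gap is a misidentification of which density \eqref{eq:davies} estimates, and it must be repaired (either by using the genuine area density as above, or by the paper's annulus decomposition) before the argument is valid.
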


We have shown above  that if $\gamma\in E$, then $B_\hh(\gamma,  \frac{c\tau\e}{2}n)$ is disjoint from $\widetilde{G_n}$. 
Let $\delta = \frac{c(\e)}{2}$, where $c(\e)$ is as in the previous lemma.   By the 
Vitali covering lemma, there exists a covering of  $E$ by a finite collection  of balls $\set{B_\hh(\gamma_i, \delta n), i=1, 
\ldots N}$ such that the balls $B_\hh(\gamma_i,  \frac{\delta}{5}n )$ are disjoint. Thus we infer that 
\begin{equation}\label{eq:N}
\sum_{i=1}^N \mathrm{Area} \lrpar{B_\hh\lrpar{\gamma_i,  \frac{\delta}{5}n} }
= 
4\pi N \sinh^2 \lrpar{ \frac{\delta}{10}n }
 \leq  \mathrm{Area}  \lrpar{(\widetilde G_n)^c}.
\end{equation} Now, a well known theorem due to Margulis \cite{margulis these}  (see also \cite{eskin mcmullen})
 asserts that when $r\cv\infty$, 
\begin{equation}\label{eq:margulis}
\#B_\Gamma ( r) \sim 
  \frac{\mathrm{Area} (B_\hh(0, r))}{\mathrm{Area}(X)} \sim \frac{e^r}{\mathrm{Area}(X)}.
  \end{equation}   It follows that there is a   constant $C$ such that for every $r$, $\#B_\Gamma(r) \leq C e^r$, hence  by \eqref{eq:N} 
  we conclude that 
  $$\# E \lesssim N   e^{\delta n} \lesssim   \frac{e^{{n\tau} - c(\e) n}}{\sinh^2\lrpar{\frac{\delta}{10}n}}
   e^{\delta n}  \lesssim
    e^{ {\tau}n - \lrpar{ c(\e)   -\frac45 \delta} n}\leq C e^{{\tau}n - \frac{c(\e)}{2} n},$$  
and  the proof of the theorem is complete in the torsion free case. 
\end{proof}

\begin{proof}[Proof of Lemma \ref{lem:large dev stopping time}]  
The almost sure convergence $\frac{T_n}{n}\cv \tau$ was established in the proof of  
Theorem \ref{thm:support brownien}, the point here is to make this convergence more precise. As we did there we slightly shift our point of view and see our 
 Brownian paths as sitting  on $X$. 
 We want to formalize the idea of a Brownian path issued from $S$ as obtained from the concatenation of path segments 
 starting and ending on $S$. For this we let $\om_S$ be the set of continuous paths $[0, \infty)\cv X$ starting on $S$, 
 endowed as usual with the probability measure $W_S$. We define a Markov chain with values in $\om_S$ as follows: the initial 
 distribution is $W_S$, and for a.e. $\omega$ the transition kernel is given by 
 $p(\omega, \cdot) = W_{\omega(T(\omega))}$, 
 where $T(\omega)$ is our stopping time. This Markov chain yields a sequence of random paths $(\omega_n)$  
 in $\Omega_S$. By the strong Markov property of Brownian motion, the law of the random 
 path obtained by concatenating the path segments  $\omega_n\rest{[0, T(\omega_n)]}$ is $W_S$. 
 Associated to the Markov chain there is a real random variable $T(\omega_n)$ and we 
 see that the law of $T_n(\omega)$ is the same as that of $\sum_{k=1}^n T(\omega_k)$. 
 
It is easy to see that  the random variables $(\omega_n)$ have  the same law $W_S$. However they are not independent since 
$\omega_{n+1}$ must start at $\omega_n(T(\omega_n))$. The main observation 
 is that independence holds after two iterations. Indeed for 
every $x\in S$, 
$$p(W_x, \cdot) = \int p(\alpha, \cdot) W_x(d \alpha) = \int W_{\alpha(T(\alpha))}  W_x(d \alpha) =\int W_y ds(y) = W_S$$ because
by construction  the 
law of $\alpha(T(\alpha))$ is the circle measure $ds$. Thus we infer that for a.e. $\omega\in S$, 
$$p^2(\omega, \cdot) =  {\int p(\alpha, \cdot) p(\omega, d\alpha)} = \int p(\alpha, \cdot) W_{\omega(T(\omega))}(d\alpha) =W_S.$$  
It follows that the random variables $\omega_1, \omega_3, \omega_5, \ldots$ 
(resp. $\omega_2, \omega_4, \cdots$) are independent. Furthermore, 
by Proposition \ref{prop:exponential moment stopping time}, there exists $s>0$ such that 
$\ee(e^{sT(\omega)})<\infty$.
 Therefore  the desired estimate follows 
from the classical large deviation theorem for partial sums of i.i.d. random variables 
(applied separately to $\sum_{ \mathrm{even}\ k} T(\omega_k)$ and  $\sum_{ \mathrm{odd}\ k} T(\omega_k)$). 
 \end{proof}

\begin{proof}[Proof of Lemma \ref{lem:heat}]
Let $\nu_t$ (resp. $\nu_t^0$) be the law at time $t$ of Brownian motion with initial distribution $ds$   (resp. $\delta_0$). 
For notational ease, we put  $F = \widetilde {G}_n^c \cap B_\hh\lrpar{0, {n\tau}}$. 
We know that  $\nu_{n\tau}(F)\leq W_{S}(B_n)\leq e^{-c(\e)n}$, and 
from this we want to derive an estimate for the hyperbolic area of $F$. 
Let $\delta>0$ to be specified later, and decompose $F$ as 
$$F=\left(F\cap B_\hh\left(0, {n\tau} - n\delta\right)\right) \cup  \left(F\cap A_\hh\left({n\tau} - n\delta, {n\tau}\right)\right)$$ where $A_\hh(r,s)$ denotes the annular region centered at 0 with inner radius $r$ and outer radius $s$. 
The first observation is that  $$\mathrm{Area}\left(F\cap B_\hh\left(0, {n\tau} - n\delta\right)\right)
\leq \mathrm{Area} \left(B_\hh\left(0, {n\tau}- n\delta\right) \right) \lesssim 
\exp\lrpar{{n\tau} - n\delta}.$$

The probability measure $\nu_t^0$ is radial, and 
from the estimate for its 
 radial density $k^0(t,r)$ given in \eqref{eq:davies}, we deduce that 
for $t=n\tau$ and $ {n\tau} - n\delta\leq r\leq {n\tau}$, 
$$\unsur{\sqrt{n}} e^{-n\frac{\delta^2}{4\tau}}\lesssim k^0( n\tau ,r)\lesssim  \unsur{\sqrt{n}}. $$ Since $\nu_{n\tau}$ is an average of bounded translates of $\nu_{n\tau}^0$, the  same estimate holds for the radial density $k(n\tau, r)$ of $\nu_{n\tau}$. 
On the other hand the radial density of hyperbolic area is $r\sinh r$, so we infer that 
\begin{align*}
\mathrm{Area}\lrpar{F\cap A_\hh\lrpar{n\tau - n\delta, n\tau}} &\leq 
\int_{F\cap A_\hh\lrpar{n\tau - n\delta, n\tau}} r(\sinh r) dr d\theta\\
&\lesssim  \int_{F\cap A_\hh\lrpar{n\tau - n\delta, n\tau}} r(\sinh r) k( n\tau ,r) \sqrt{n} e^{n\frac{\delta^2}{4\tau}} drd\theta
\\
&\lesssim n^{3/2}\exp\lrpar{n\tau+  \frac{\delta^2}{4\tau}n} \nu_{n\tau}(F)\leq n^{3/2}
\exp\lrpar{n\tau+  \frac{\delta^2 }{4\tau}n -c(\e)n}. 
\end{align*}
Finally we get that 
$$\mathrm{Area}(F) \lesssim \exp\lrpar{n\tau - n\delta} + n^{3/2}
\exp\lrpar{ {n\tau}+  \frac{\delta^2}{4\tau}n -c(\e)n},$$ so by choosing $\delta =\sqrt{\tau c(\e)}$ we are done.
\end{proof}

\medskip

Let us now assume that $\Gamma$ has torsion, and briefly explain how to adapt the proof of the theorem. We pick a torsion-free 
subgroup $\Gamma'\leq \Gamma$ of finite index, and apply the first part of the proof to $\Gamma'$. In this way we get a 
discretization measure $\mu$ on $\Gamma'$. We slightly modify Proposition \ref{prop:trace perturbee} to 
allow for a perturbation $a$ belonging to $\Gamma$ 
(this only slightly affects the counting in the last part of the proof), so that its conclusion   now reads  
$$\mu^n\lrpar{\set{\gamma\in \Gamma', \ \exists a\in \Gamma \text{ s.t.  } \norm{a}\leq e^{ c  \e n}  \text{ and  } \abs{
\unsur{n} \log \abs{\tr \rho(\gamma a)} - \chi_\mu}>\e }} \leq \exp(-c\e n).$$ Then we argue exactly as in the torsion free case, by introducing the set $E$ of $\gamma \in \Gamma\cap B_\hh\lrpar{{n\tau}}$ such that 
 $\abs{\frac{1}{2\tau n} \log\abs{\tr^2(\rho(\gamma))} -\chi_{\rm Brown}(\rho)}\geq \e.$ To the discretized Brownian motion 
 on $\Gamma'$, we associate the sets $G_n$ and $\widetilde G_n$ exactly as before, 
 and we estimate $\#E$ by arguing that, from the modified 
 version of Proposition \ref{prop:trace perturbee}, if $\gamma\in E$, 
 then $B_\hh(\gamma,  \frac{c\tau\e}{2}n)$ is disjoint from $\widetilde{G_n}$. 
This completes the proof of the theorem. 
 \qed

The following corollary, of independent interest, is the key step in the proof of Theorem \ref{thm:equidist}. 
 
\begin{cor}\label{cor:fixed param}
Let $\Gamma$ be a  lattice in $\mathrm{PSL}(2, \rr)$, and   $\rho$ be a non-elementary 
 parabolic 
  representation of $\Gamma$. If $(\gamma_n)$ is an admissible random sequence in $\Gamma$, then almost surely,
 $$\unsur{2d(\gamma_n)} \log\abs{\tr^2(\rho(\gamma_n))} \underset{n\cv\infty}\longrightarrow \chi_{\rm Brown}(\rho).$$
\end{cor}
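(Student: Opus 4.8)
The plan is to deduce the almost sure convergence from the counting estimate of Theorem \ref{thm:counting} via the Borel--Cantelli lemma. Fix $\e>0$. For each $n$, the random element $\gamma_n$ is chosen uniformly in $B_\Gamma(r_n)$ relative to the counting measure, so
\[
\pp\lrpar{\abs{\unsur{2d(\gamma_n)}\log\abs{\tr^2(\rho(\gamma_n))}-\chi_{\rm Brown}(\rho)}>\e}
= \frac{\#\set{\gamma\in B_\Gamma(r_n),\ \abs{\unsur{2d(\gamma)}\log\abs{\tr^2(\rho(\gamma))}-\chi_{\rm Brown}(\rho)}>\e}}{\#B_\Gamma(r_n)}.
\]
Here one small point needs care: Theorem \ref{thm:counting} controls the elements of $B_\Gamma(r)$ for which $\unsur{2r}\log\abs{\tr^2}$ is far from $\chi_{\rm Brown}$, using the \emph{fixed} normalization $2r$, whereas the corollary uses the actual displacement $2d(\gamma_n)\le 2r_n$. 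Since $\abs{\tr^2(\rho(\gamma))}$ grows at most like $\norm{\rho(\gamma)}^2\lesssim \norm{\rho_{\rm can}(\gamma)}^{2\beta_\rho}$ by Corollary \ref{cor:compar}, and $d(\gamma)\le r_n$, a straightforward splitting of $B_\Gamma(r_n)$ into dyadic annuli $\{d(\gamma)\in[r_n-(k+1),r_n-k]\}$ reduces the estimate with the true displacement to finitely many (or polynomially many in $r_n$) applications of Theorem \ref{thm:counting} on balls of slightly smaller radius; the Margulis asymptotics \eqref{eq:margulis} show that the annuli near the boundary carry a definite proportion of the mass, so this costs nothing exponentially.

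Granting this reduction, Theorem \ref{thm:counting} gives, for $n$ large, a numerator bounded by $e^{(1-c(\e))r_n}$ while the Margulis count \eqref{eq:margulis} gives $\#B_\Gamma(r_n)\asymp e^{r_n}/\mathrm{Area}(X)$. Hence
\[
\pp\lrpar{\abs{\unsur{2d(\gamma_n)}\log\abs{\tr^2(\rho(\gamma_n))}-\chi_{\rm Brown}(\rho)}>\e}\lesssim e^{-c(\e)r_n}.
\]
By the admissibility hypothesis, $\sum_n e^{-cr_n}<\infty$ for every $c>0$, so in particular $\sum_n e^{-c(\e)r_n}<\infty$. The Borel--Cantelli lemma then shows that almost surely, for all large $n$, $\abs{\unsur{2d(\gamma_n)}\log\abs{\tr^2(\rho(\gamma_n))}-\chi_{\rm Brown}(\rho)}\le\e$.

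To finish, apply this for $\e=1/k$, $k\in\nn$: the countable intersection of the corresponding almost sure events is again almost sure, and on it $\unsur{2d(\gamma_n)}\log\abs{\tr^2(\rho(\gamma_n))}\to\chi_{\rm Brown}(\rho)$. The only genuine obstacle is the bookkeeping in the first paragraph — matching the fixed normalization $2r$ of Theorem \ref{thm:counting} to the variable displacement $2d(\gamma_n)$ — but this is handled purely by the annular decomposition together with the crude polynomial bound on $\log\abs{\tr^2(\rho(\gamma))}$ coming from Corollary \ref{cor:compar}; everything exponential is already contained in Theorem \ref{thm:counting}. \qed
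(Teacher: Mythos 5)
Your proposal is correct and follows essentially the same route as the paper: Borel--Cantelli applied to the counting estimate of Theorem \ref{thm:counting} combined with the Margulis asymptotics \eqref{eq:margulis}, with the only work being the passage from the fixed normalization $2r_n$ to the actual displacement $2d(\gamma_n)$. The paper handles that last point a bit more directly — it discards the $\lesssim e^{(1-\delta)r_n}$ elements with $d(\gamma)<(1-\delta)r_n$ (with $\delta=\min(1/2,\e/4\chi)$) and checks by an elementary computation that for the remaining elements an $\e$-deviation in the $d(\gamma)$-normalization forces an $\e/4$-deviation in the $r_n$-normalization, so a single application of Theorem \ref{thm:counting} suffices — but your annular decomposition accomplishes the same thing.
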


\begin{proof}
Recall that admissible means that the  $\gamma_n$ are chosen randomly in $B_\Gamma(r_n)$, relative to the counting measure, 
where $r_n$ is a sequence such that $\sum e^{-cr_n}$ converges for every $c>0$. 

Fix $\e>0$. We will show that 
\begin{equation}\label{eq:borel cantelli}
\unsur{\#  B_\Gamma(r_n) }\  \# \set{\gamma \in B_\Gamma(r_n), \ \abs{\unsur{2d(\gamma_n)} \log\abs{\tr^2(\rho(\gamma_n))} - \chi_{\rm Brown}(\rho) }> \e} \lesssim e^{-c(\e)r_n}.
\end{equation}
We then deduce from the Borel-Cantelli lemma that  a.s. for large $n$, 
$\unsur{2d(\gamma_n)} \log\abs{\tr^2(\rho(\gamma_n))}$ is  $\e$-close to $\chi_{\rm Brown}(\rho) < \e$, and the result follows.

\medskip

The first observation is that in $B_\Gamma( r_n)$,  by the Margulis estimate \eqref{eq:margulis} on $\#B_\Gamma(r)$, 
for every $\delta>0$, we have that 
\begin{equation}\label{eq:cardinal}
\#\set {\gamma, \ d(\gamma_n) < (1-\delta)r_n} \lesssim e^{(1-\delta) r_n}. 
\end{equation}
We choose $\delta =  \min \lrpar{\unsur{2}, \frac{\e}{4\chi}}$. 
Now if $$d(\gamma_n) \geq  (1-\delta)r_n \text{ and }
\abs{\unsur{2d(\gamma_n)} \log\abs{\tr^2(\rho(\gamma_n)) }- \chi_{\rm Brown}(\rho) }> \e,$$ 
an elementary computation shows that 
$$\abs{\unsur{2r_n} \log\abs{\tr^2(\rho(\gamma_n))} - \chi_{\rm Brown}(\rho) } > (1-\delta)\e-\delta \chi_{\rm Brown}(\rho) > \frac{\e}{4}.$$
Therefore the desired estimate \eqref{eq:borel cantelli} follows from \eqref{eq:cardinal}, 
   Theorem \ref{thm:counting} and  
the fact that 
$\#\  B_\Gamma(r_n)$ grows like  ${e^{r_n}}$.
\end{proof}

\begin{rmk}\label{rmk:lattice}
This corollary yields yet another 
definition of the natural Lyapunov exponent, where the choice of Riemann surface structure is reflected in 
the fact of counting in the lattice $\Gamma$.  Similarly, we could  replace the trace by the norm, and
 assert that generically, $\unsur{2d(\gamma_n)} \log {\norm{\rho(\gamma_n)}}$ converges to 
 $\chi_{\rm Brown}(\rho)$. The proof is   identical to that of Theorem \ref{thm:counting} 
 (observe that Proposition \ref{prop:trace perturbee} becomes much simpler in this case) . 
\end{rmk}

\subsection{Proof of Theorem \ref{thm:equidist}}\label{subs:subharmonic}
Let $m_n$ be the normalized counting measure on $B_\Gamma(r_n)$ and $m = \prod_{n\geq 1} m_n$. 
By considering the potentials of the currents $\unsur{2d(\gamma_n)} [Z(\gamma_n, t)]$, 
it is enough to show that for $m$-a.e.  sequence $\boldsymbol{\gamma} = (\gamma_n)$ 
 in $\Gamma^\nn$, the sequence of psh functions  
 $(\la\mapsto u (\la, \boldsymbol{\gamma}, n))_{n\geq 1}$ on $\La$ defined by 
 $$u(\la, \boldsymbol{\gamma}, n) =  \unsur{2d(\gamma_n)} \log \abs{\tr^2(\rho_\la(\gamma_n))-t}$$
  converges in $L^1_{\rm loc}(\La)$ to $\la\mapsto \chi_{\rm Brown}(\rho_\la)$. Corollary \ref{cor:fixed param} above implies that for 
  every {\em fixed} parameter $\la$,  $u(\la, \boldsymbol{\gamma}, n)$ converges $m$-almost surely to $\chi_{\rm Brown}(\rho_\la)$. 
 Now clearly the set $E\subset \La\times \Gamma^\nn$ of parameters $(\la, \boldsymbol{\gamma})$ such that 
  $u(\la, \boldsymbol{\gamma}, n)$ converges to $\chi_{\rm Brown}(\rho_\la)$ is measurable, so 
  by Fubini's theorem   it has full $ d\la \otimes m$ measure. Reverting the order of integration  (Fubini's theorem again), we infer that 
  for $m$ a.e. $\boldsymbol{\gamma}$, $u(\la, \boldsymbol{\gamma}, n)$ converges   to $\chi_{\rm Brown}(\rho_\la)$ $d\la$-a.s.
 
  In addition, the sequence $u(\la, \boldsymbol{\gamma}, n)$ is locally uniformly bounded from above on $\La$. Indeed 
   we have  that 
 \begin{align*}
 \unsur{2d(\gamma_n)} \log \abs{\tr^2(\rho_\la(\gamma_n))-t} &\leq \unsur{2d(\gamma_n)} 
 \log \norm{\rho_\la(\gamma_n))}^2 +O(1)\\ &\leq \frac{\beta_{\rho_\la}}{d(\gamma_n)} \log \norm{\gamma_n} + O(1) 
 \leq \frac{\beta_{\rho_\la}}{2} + O(1)
 \end{align*}
  where the constant
 $\beta_{\rho_\la}$  
 is locally uniform in $\La$ (see Corollary \ref{cor:compar}).
 By the standard compactness theorems  
 for sequences of subharmonic functions (see Theorems 3.2.12, 3.2.13 and 3.4.14 in \cite{hormander}), we conclude  that 
 $u(\la, \boldsymbol{\gamma}, n)$ converges   to $\chi_{\rm Brown}(\rho_\la)$ in $L^1_{\rm loc}(\La)$. Indeed, if  $\boldsymbol{\gamma}$ is such that convergence holds a.e in $\La$, we can extract a subsequence converging  to some $v$ 
 in $L^1_{\rm loc}$, and $\limsup_{n\cv\infty}  u(\la, \boldsymbol{\gamma}, n) = v$ outside a polar set, 
 therefore $v = \chi_{\rm Brown}$. This finishes the proof.\qed

 \begin{rmk}\label{rmk:torsion equidist}
As for Theorem \ref{thm:counting}, the statement of  Theorem \ref{thm:equidist} may be adapted to the case where $\Gamma$ 
admits non-trivial torsion. 
\end{rmk}
 
 \subsection{Random closed geodesics}\label{subs:models}
Let $\mathrm{CG}(X)$ be the set of closed geodesics in $X$, which is a countable set. We allow a geodesic to be travelled several times. If this does not happen, the geodesic is said to be {\em primitive}.

Let   $\rho\in \mathrm{Hom_{par}}(G, \PSL)$. If  $\gamma\in \mathrm{CG}(X)$    
the    holonomy  $h_\rho({\gamma})$  is defined   up to conjugacy in $\rho(\Gamma)$, so it 
  makes sense to talk about its (squared) trace 
$\tr^2  h_\rho({\gamma})$. If now $(\rho_\la)$ is family of parabolic representations,
for $t\in \cc$ we may consider the subvariety in $\La$ defined by 
$$Z(\gamma, t) = \set{\la\in \La,\ \tr^2  h_{\rho_\la} ({\gamma}) = t}.$$

Here we first briefly review several notions of  ``random closed geodesics" on $X$, and interpret Theorem \ref{thm:equidist} in these terms. 

\subsubsection{Thurston model} To start with,  assume that $X$ is compact. 
The following model for random geodesics on $X$ was apparently popularized by Thurston 
(see e.g. \cite{bonahon}): for $(x,v)\in SX$ let as before $\gamma_{(x,v)}$ be the associated unit 
speed   geodesic ray. For $t>0$, close the path $\gamma_{(x,v)}\rest{[0,t]}$ with a shortest 
path returning to $x$. Notice that there may be a choice involved here, which we may   disregard   for  it happens only for a set of $(x,v)$ of zero measure. We denote by   
 $\mathrm{cg}{(x,v)}(t)$   the unique 
closed geodesic freely homotopic to the obtained loop. Notice  that its length may be much smaller than $t$. Nevertheless
it follows from Theorem \ref{thm:counting} applied to the identity representation
 that this phenomenon happens only on a set of exponentially small measure.   

In this way we have   constructed a family of projections $v\mapsto \mathrm{cg}{(x,v)}(t)$ from 
 $SX$ to  $\mathrm{CG}(X)$ 
indexed by $t$. Projecting the (normalized)  Liouville  
measure  gives rise to a family of probability measures $(m^{\rm geodesic}_t)_{t>0}$ on $\mathrm{CG}(X)$.

\medskip

Several variants of this construction may be considered:
\begin{itemize}
\itm Variant 1: a first possibility is to restrict to geodesic paths starting 
 from the marked point $\star$, and close them so as to obtain loops based at $\star$. 
\itm Variant 2:   allow for paths starting from any $(x,v)$, but append them at the two endpoints
 with shortest paths joining them to $\star$.
\end{itemize}
Notice that in these two variants, we naturally obtain   a family 
 of  measures  on the fundamental group, which in turn projects to a family of measures on $\mathrm{CG}(X)$. 
 Notice also that in the second variant, the family of  measures that we obtain in $\mathrm{CG}(X)$ is
  the same as in the original Thurston construction (since the corresponding loops are conjugate in $\Gamma$). 
  In any case,  
  the measures  $(m^{\rm geodesic}_t)_{t>0}$ may   be thought of as measures on $\Gamma$. 
A good intuitive picture for $m_t^{\rm geodesic}$ is that of being loosely equidistributed  
on an annulus of  bounded width about  the 
circle of radius $t$ in $\Gamma\subset \hh$.

 These variants are interesting because they also make sense when $X$ is only of finite volume. Indeed, using these constructions
  we obtain a family of measures   $(m^{\rm geodesic}_t)_{t>0}$ on $G$. 
  Now $\gamma\in \Gamma$ projects onto a closed geodesic if and only if it is not  parabolic. 
  Again by  Theorem \ref{thm:counting}    this happens outside a set  of exponentially small measure.
So what we do is simply to   restrict to loxodromic elements, renormalize the mass
  and project  to $\mathrm{CG}(X)$ we also 
 obtain in this way a reasonable model for random closed geodesics on $X$. 

%
%
 
  The following lemma asserts that the measures $(m^{\rm geodesic}_t)$ are well spread in $\Gamma$. 
   
  \begin{lem}\label{lem:well spread}
Let $X$ be a hyperbolic Riemann surface of finite type endowed with its hyperbolic metric, and 
  let $(m^{\rm geodesic}_t)_{t>0}$ be any of the family of measures on $\Gamma$ constructed above. 
  Then for every $\delta>0$ there exist   constants  $K$ and 
  $c$ such that for all $t>0$, for every subset $A\subset \Gamma$ with cardinality
  $\#A \leq e^{(1-\delta)t}$, we have that  $ m^{\rm geodesic}_t (A)\leq K e^{-ct}$.
  \end{lem}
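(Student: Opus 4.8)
The plan is to realise $m^{\rm geodesic}_t$ as a push-forward of a uniform measure on a hyperbolic circle and then estimate the fraction of that circle which a set of cardinality $e^{(1-\delta)t}$ can account for. It suffices to treat Variant~1 (geodesics based at $\star$, closed by a shortest returning loop): in Variant~2 the starting distribution is Liouville rather than the circle measure at $\star$, but the two are mutually absolutely continuous with locally bounded densities and all the estimates below pass through, while the original Thurston measures live on $\mathrm{CG}(X)$ rather than on $\Gamma$ and are obtained from these by restricting to loxodromic elements and renormalising, a harmless operation by Theorem~\ref{thm:counting} applied to $\rho_{\rm can}$ (parabolic elements being exponentially rare in $B_\Gamma(r)$). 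So let $\mathsf m_\star$ be the uniform probability measure on $T^1_\star X$, fix $v$, lift the ray $\gamma_v$ to the ray from $0$ in $\hh$, and let $p_t=\gamma_v(t)$ be its point at time $t$ on the hyperbolic circle $S_\hh(0,t)$ of length $2\pi\sinh t$. Since $p_t=\exp_0(tv)$ and $\mathsf m_\star$ is rotation invariant, $p_t$ is \emph{uniformly} distributed on $S_\hh(0,t)$, so for every Borel $E\subset\hh$,
\[
\mathsf m_\star\big(\{v:\ p_t\in E\}\big)\ =\ \frac{\length_\hh\!\big(E\cap S_\hh(0,t)\big)}{2\pi\sinh t}\,.
\]
Moreover the element $\gamma(v)\in\Gamma$ attached to $v$ is characterised by $d_\hh(p_t,\gamma(v)\cdot 0)=d_X(\pi(p_t),\star)=:\ell(v)$ (equivalently, $p_t$ lies in the Dirichlet cell $\mathcal D_{\gamma(v)}=\gamma(v)\mathcal D$, where $\mathcal D$ is the Dirichlet domain at $0$), and $m^{\rm geodesic}_t(A)=\mathsf m_\star(\{v:\gamma(v)\in A\})$.

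The first ingredient I would need is an elementary plane estimate: there is a constant $C$ with $\length_\hh(B_\hh(P,s)\cap S_\hh(0,t))\le C e^{s}$ for all $P\in\hh$ and all $s,t>0$. This follows from the law of cosines: for a point $z$ of $S_\hh(0,t)$ making angle $\theta$ at $0$ with the ray $[0,P)$ one has $\cosh d_\hh(z,P)=\cosh(t-d_\hh(0,P))+\sinh t\,\sinh d_\hh(0,P)\,(1-\cos\theta)$, which is $\gtrsim e^{t+d_\hh(0,P)}\theta^{2}$ and also forces $|d_\hh(0,P)-t|\le s$; hence $\{d_\hh(\cdot,P)\le s\}$ cuts out an arc of angular width $\lesssim e^{(s-t-d_\hh(0,P))/2}$, of length $\lesssim\sinh(t)\,e^{(s-t-d_\hh(0,P))/2}\lesssim e^{s}$. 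The second, and less formal, ingredient is an \emph{excursion estimate}: for some $\kappa>0$ and all $s,t>0$,
\[
\mathsf m_\star\big(\{v:\ \ell(v)\ge s\}\big)\ \lesssim\ e^{-\kappa s}.
\]
For compact $X$ this is trivial ($\ell$ is bounded) --- this is the only point where non-compactness intervenes. In general $\{\ell(v)\ge s\}=\{p_t\in\mathcal O_s\}$ where $\mathcal O_s\subset\hh$ is the $\Gamma$-invariant union of the cuspidal horoballs of depth $\gtrsim s$; since $\vol(\mathcal O_s/\Gamma)\asymp e^{-s}$ and a large hyperbolic circle equidistributes in $X$, it meets $\mathcal O_s$ in an arc of length $\lesssim e^{-\kappa s}e^{t}$. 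This is an effective form of Sullivan's logarithm law for geodesic excursions into cusps \cite{sullivan disjoint spheres} (compare the discussion around Proposition~\ref{prop:lyap2}, and \cite{eskin mcmullen} for the relevant equidistribution of circles). For Variant~2 one bounds $d_X(\gamma_{(x,v)}(t),x)\le d_X(\gamma_{(x,v)}(t),\star)+d_X(\star,x)$ and applies this to each term.

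With these two facts the conclusion follows by cutting off at $s_0=\tfrac{\delta}{1+\kappa}\,t$. On $\{\ell(v)\ge s_0\}$ the contribution is $\lesssim e^{-\kappa s_0}$. On the complement, if $\gamma(v)=\gamma\in A$ then $\ell(v)=d_\hh(p_t,\gamma\cdot 0)<s_0$, so $p_t\in B_\hh(\gamma\cdot 0,s_0)\cap S_\hh(0,t)$ and therefore
\[
\mathsf m_\star\big(\{\gamma(v)=\gamma,\ \ell(v)<s_0\}\big)\ \le\ \frac{\length_\hh\!\big(B_\hh(\gamma\cdot 0,s_0)\cap S_\hh(0,t)\big)}{2\pi\sinh t}\ \lesssim\ e^{-t}e^{s_0}.
\]
Summing over the at most $e^{(1-\delta)t}$ elements of $A$ gives $m^{\rm geodesic}_t(A)\lesssim e^{-\kappa s_0}+e^{(1-\delta)t}e^{-t+s_0}=O\!\big(e^{-\kappa\delta t/(1+\kappa)}\big)$, which is the asserted bound with $c=\kappa\delta/(1+\kappa)$ and $K$ absorbing the implied constants (bounded ranges of $t$ being trivial).

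The step I expect to be the real obstacle is the uniform-in-$t$ excursion estimate of the second paragraph: everything else is either soft (the push-forward description) or a one-page hyperbolic computation. I also note a shortcut avoiding it altogether: since the translates $\mathcal D_\gamma=\gamma\mathcal D$ of the Dirichlet domain are convex, $\mathcal D_\gamma\cap S_\hh(0,t)$ is a single arc, and an inspection in the half-plane model near each cusp should show that its length is bounded by a constant $B(\Gamma)$ uniformly in $\gamma$ and $t$ (the sphere being at finite radius, it can only meet a cuspidal spike in its shallow, transverse part); then $m^{\rm geodesic}_t(A)=\sum_{\gamma\in A}\length_\hh(\mathcal D_\gamma\cap S_\hh(0,t))/(2\pi\sinh t)\le B\,\#A/(2\pi\sinh t)\lesssim e^{-\delta t}$ at once. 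This uniform bound is itself a mildly delicate, cusp-by-cusp exercise, so I would present the excursion-estimate argument as the main route and keep the Dirichlet-cell bound as a remark.
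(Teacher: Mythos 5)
Your argument is essentially the paper's own proof: both rest on the excursion estimate for $d_X(\gamma_v(t),\star)$ (the logarithm-law fact which the paper, like you, only cites, via Sullivan and Kleinbock--Margulis) combined with the hyperbolic law of cosines --- which you phrase as an arc-length bound for $B_\hh(\gamma\cdot 0,s)\cap S_\hh(0,t)$ while the paper bounds the angle at the origin --- followed by the same union bound over the at most $e^{(1-\delta)t}$ elements of $A$. Your optimization of the cutoff $s_0$ merely compensates for leaving the excursion exponent $\kappa$ unspecified (the paper takes the threshold $\tfrac{\delta}{2}t$ directly), so the proposal is correct and follows the same route.
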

  
  \begin{proof}
Let us treat the case of paths starting from $\star$  (Variant 1 above). If $v$ is a unit tangent vector at $\star$, 
we denote by $\gamma_v$ the unit speed   geodesic ray  starting from $\star$ in the direction of $v$. 

The first claim is that there exists  a constant  $K_1$  depending only on $X$ such that the  probability  
that $d_X(\gamma_v(t), \star)$ is larger than $ \frac{\delta}{2} t$ is not greater than 
$K_1e^{-   {\delta t}/{2}}$.  A proof of this fact is outlined in \cite[\S 9]{sullivan disjoint spheres}, and discussed 
with  much greater detail and generality in \cite{kleinbock margulis}.

\medskip

For $t>0$, let $E_t$ be the set of tangent vectors $v\in T^1_\star X$ such that $d_X(\gamma_v(t), \star)\leq  
\frac{\delta}{2} t$, which  by our previous claim has 
measure $\geq 1-  K_1e^{ {\delta t}/{2}}$. We lift the situation to $\hh$, so that our geodesics are now rays of hyperbolic length $t$ issued from $0$. If a lattice point  $\gamma \in \Gamma\cdot 0$  is the endpoint of $\widetilde\gamma_v(t)$ 
for some $v\in E_t$, then  $d_\hh(\gamma, \gamma_v(t))\leq \frac{\delta}{2} t$. We apply 
the first cosine rule \cite[\S 7.12]{beardon} in the   triangle 
$T(0, \gamma, \widetilde \gamma_v(t))$ to get an estimate of the angle $ \theta$ 
at the origin between the rays 
$\gamma_v(t)$ and $[0,\gamma]$. 
It follows that   $\cos( \theta)   = 1- C e^{-(2+\alpha)t}+ O(e^{-3t})$, with $\abs{\alpha}\leq \delta$, from which we deduce that 
$\theta \leq K_2 e^{-(1-\delta/2) t}$. Hence for any subset $A $ of $\Gamma$, the set of  $v\in E_t$ such that 
$\widetilde\gamma_v(t)$ belongs to  $A$ has measure at most  $K_2 (\#A) e^{-(1-\delta/2) t}$.

We conclude that if $A$ is as in the statement of the lemma, the measure of the set of tangent vectors $v$ such that 
$  \widetilde\gamma_v(t)$ belongs to  $A$ 
is not greater than $( K_1 + K_2) e^{-\frac{\delta}{2} t}$ and the result follows.
\end{proof}

It is an entertaining  exercise in hyperbolic geometry to show that the cardinality of the 
set of $\gamma\in B_\Gamma(t)$ which are proper powers of elements of $\Gamma$  
is of order of magnitude $e^{t/2}$. It follows that  a generic geodesic with 
respect to the measure $m^{\mathrm{geodesic}}_t$ is primitive. 
  
\subsubsection{Brownian Thurston model}
We can  do exactly the same construction   by using Brownian paths instead of geodesic ones. 
As before, several variants are possible, by choosing the initial point to be either fixed or 
 distributed according to the Poincar\'e 
volume element on $X$, and by choosing the paths to be closed to become a loop based at $\star$ or not. In this way we obtain   
families of measures on $\Gamma$ or on  
$\mathrm{CG}(X)$, which, abusing slightly, will be given the same notation 
 $(m_t^{\rm Brownian})_{t>0}$.
In the unit disk picture, the corresponding measures on $\Gamma$ are induced by the heat flow on $\hh^2$, so they 
 now look like Gaussian measures concentrated around   the circle of radius $t$. 

Again, these measures are well spread on $\Gamma$. 

 \begin{lem}\label{lem:well spread2}
Let $X$ be as above and  $(m^{\rm Brownian}_t)_{t>0}$ be the family of measures on $\Gamma$ 
    obtained from the Brownian model for random geodesics.  
  Then for every $\delta>0$ there exist   constants  $K$ and 
  $c$ such that for all $t>0$, for every subset $A\subset \Gamma$ with cardinality
  $\#A \leq e^{(1-\delta){t}}$, it holds that  $ m^{\rm Brownian}_t (A)\leq K e^{-ct}$.
  \end{lem}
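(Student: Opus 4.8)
The plan is to imitate the proof of Lemma~\ref{lem:well spread}, substituting Brownian paths for geodesic rays and the heat kernel estimate \eqref{eq:davies} on $\hh^2$ for the geodesic escape bound of Sullivan--Kleinbock--Margulis. As there, I would treat the model in which paths are issued from $\star$ and closed up at $\star$ (the other variants being entirely analogous), so that it suffices to bound $m^{\rm Brownian}_t(A)=\pp_\star(\gamma(\omega)\in A)$, where $\gamma(\omega)\in\Gamma$ is obtained from a path $\omega$ by lifting $\omega\rest{[0,t]}$ to a path $\widetilde\omega$ in $\hh^2$ issued from $0$ and taking $\gamma(\omega)\cdot 0$ to be the point of $\Gamma\cdot 0$ closest to $\widetilde\omega(t)$; thus $d_\hh(\widetilde\omega(t),\gamma(\omega)\cdot 0)=d_X(\omega(t),\star)$.

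Fix $\delta>0$. I would first split $\pp_\star(\gamma(\omega)\in A)$ into $\pp_\star\big(d_X(\omega(t),\star)>\tfrac\delta2 t\big)$ and $\pp_\star\big(\gamma(\omega)\in A,\ d_X(\omega(t),\star)\le\tfrac\delta2 t\big)$. The first term is controlled as in the proof of Proposition~\ref{prop:franchi}: the heat semigroup on $X$ contracts $L^\infty$, so $\|p^X_t(\star,\cdot)\|_\infty\le\|p^X_1(\star,\cdot)\|_\infty<\infty$ for $t\ge 1$, while an elementary computation in cusp coordinates gives $\mathrm{Area}\{y\in X:\ d_X(y,\star)>s\}\lesssim e^{-s}$; hence $\pp_\star\big(d_X(\omega(t),\star)>\tfrac\delta2 t\big)\le\|p^X_1(\star,\cdot)\|_\infty\,\mathrm{Area}\{d_X(\cdot,\star)>\tfrac\delta2 t\}\lesssim e^{-\delta t/2}$. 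On the event defining the second term one has $\widetilde\omega(t)\in E:=\bigcup_{\gamma\in A}B_\hh(\gamma\cdot 0,\tfrac\delta2 t)$, so everything reduces to estimating $\pp_0(\widetilde\omega(t)\in E)$.

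For this last step, writing $\ell_r(F)$ for the length of $F\cap\partial B_\hh(0,r)$ and using polar coordinates together with \eqref{eq:davies},
$$\pp_0(\widetilde\omega(t)\in E)=\int_0^\infty k^0(t,r)\,\frac{\ell_r(E)}{2\pi\sinh r}\,dr.$$
The geometric input is that a ball of radius $s$ meets a circle $\partial B_\hh(0,r)$ in an arc of length $\lesssim e^{s}$ (near the intersection the circle has bounded geodesic curvature, so an arc of length $L$ has endpoints at distance $\asymp 2\log L$), which, summing over $\gamma\in A$ — the finitely many lattice points near $0$ causing no harm — yields $\ell_r(E)\lesssim\#A\,e^{\delta t/2}\le e^{(1-\delta/2)t}$. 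For $|r-t|\le\tfrac\delta4 t$ this gives $\frac{\ell_r(E)}{2\pi\sinh r}\lesssim e^{(1-\delta/2)t}/e^{(1-\delta/4)t}=e^{-\delta t/4}$, and since $\int_0^\infty k^0(t,r)\,dr=1$ this part of the integral is $\lesssim e^{-\delta t/4}$; for $|r-t|>\tfrac\delta4 t$ I would bound $\frac{\ell_r(E)}{2\pi\sinh r}\le 1$ and use the Gaussian factor in \eqref{eq:davies}, $\int_{|r-t|>\delta t/4}k^0(t,r)\,dr\lesssim e^{-c(\delta) t}$. Altogether $\pp_0(\widetilde\omega(t)\in E)\lesssim e^{-c(\delta)t}$, which together with the first term proves the bound with $c=c(\delta)>0$; for the finitely many small values of $t$ the statement is trivial after enlarging $K$. (Alternatively, one could deduce the lemma from Lemma~\ref{lem:well spread} itself by using that Brownian paths on $\hh^2$ are shadowed by geodesic rays, but this would require a quantitative version of the shadowing, whereas the approach above uses only \eqref{eq:davies}.)

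The hard part is the estimate of $\pp_0(\widetilde\omega(t)\in E)$. Since $A$ may have up to $e^{(1-\delta)t}$ elements, $E$ has hyperbolic area $\asymp e^{(1-\delta/2)t}$, of the same order of magnitude as $\mathrm{Area}\big(B_\hh(0,t)\big)$, where $\widetilde\omega(t)$ typically sits; so the naive bound $\sup(\text{density})\times\mathrm{Area}(E)$ is useless. The point is rather that the law of $\widetilde\omega(t)$ concentrates near the sphere $\partial B_\hh(0,t)$, on which $E$ — though voluminous — has small relative length, and this is exactly what the decomposition $\int k^0(t,r)\,\frac{\ell_r(E)}{2\pi\sinh r}\,dr$ exploits.
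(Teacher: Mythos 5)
Your proof is correct and follows essentially the same route as the paper's (sketched) argument: discard the exponentially unlikely event that $\omega(t)$ is far from the orbit $\Gamma\cdot 0$, then bound the time-$t$ law of the union of balls centered at $A$ by playing its concentration near the sphere of radius $t$ (via \eqref{eq:davies}) against the smallness of $\#A$. The differences are only bookkeeping: you take balls of radius $\tfrac{\delta}{2}t$ and compare arc lengths on circles where the paper takes radius $ct$ (with $c$ small relative to $\delta$) and compares hyperbolic areas, and you use a direct $L^\infty$ heat-kernel bound plus volume decay in place of the paper's $L^2$/Cauchy--Schwarz estimate for the escape probability.
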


The proof uses arguments similar to those of Theorem \ref{thm:counting}, and 
will only be sketched. Details will be left as an exercise to the reader.  

We need the following estimate. 

\begin{lem}
Let $(X, \star)$ be as above. There exists a positive constant $c$ such that 
$$\pp_\star\lrpar{d_X(\star, \omega(t))\geq ct} \lesssim e^{-ct/2}.$$
\end{lem}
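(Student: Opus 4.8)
The plan is to reduce the estimate to the corresponding tail bound for the radial part of Brownian motion on the hyperbolic plane, where we have the explicit estimate \eqref{eq:davies} at our disposal.

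\emph{Step 1 (passage to the universal cover).} The covering projection $\pi\colon\hh^2\cv X$ is a local isometry, hence $1$-Lipschitz: any path in $\hh^2$ projects to a path of the same length in $X$, so $d_X(\pi(p),\pi(q))\leq d_\hh(p,q)$ for all $p,q\in\hh^2$. Moreover, lifting a Brownian path issued from $\star$ to the path $\widetilde\omega$ issued from $0\in\hh^2$ identifies $W_\star$ with $W_0$. Therefore $d_X(\star,\omega(t))\leq d_\hh(0,\widetilde\omega(t))$ pointwise, and
\[
\pp_\star\big(d_X(\star,\omega(t))\geq ct\big)\ \leq\ \pp_0\big(d_\hh(0,\widetilde\omega(t))\geq ct\big).
\]
Note that the cusps of $X$ cause no trouble here, precisely because we only need an upper bound for the displacement, and this upper bound forgets the geometry of $X$ entirely — this is the one point where a reader might expect a difficulty, and it is dispatched by lifting.

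\emph{Step 2 (tail estimate in $\hh^2$).} It remains to bound $\pp_0(d_\hh(0,\widetilde\omega(t))\geq ct)=\int_{ct}^{\infty}k^0(t,r)\,dr$ using \eqref{eq:davies}. On $\{r\geq ct\}$ with $c>1$ one has $r-t\geq(c-1)t$, whence $\tfrac{(r-t)^2}{4t}\geq\tfrac{c-1}{8}(r-t)+\tfrac{(c-1)^2}{8}\,t$ (split the left side into two equal halves and bound each). Inserting this into \eqref{eq:davies}, the rational prefactor there (which is $\lesssim (1+r)/t$) is absorbed on integration by the exponential factor $e^{-\frac{c-1}{8}(r-t)}$, and one obtains
\[
\pp_0\big(d_\hh(0,\widetilde\omega(t))\geq ct\big)\ \lesssim\ e^{-(c-1)^2 t/4}.
\]
Alternatively one may bypass \eqref{eq:davies} altogether: bound the left-hand side by $\pp_0\big(\sup_{0\leq s\leq t}d_\hh(0,\widetilde\omega(s))\geq ct\big)$ and invoke Lemma~\ref{lem:martingale}, whose proof shows that for any $c$ exceeding $m:=\ee_0\big(\exp d_\hh(0,\widetilde\omega(1))\big)$ this probability is $\lesssim e^{-(c-\log m)t}$.

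\emph{Step 3 (choice of the constant).} In either route the exponential rate obtained at the threshold $ct$ beats $c/2$ once $c$ is taken large enough: $(c-1)^2/4>c/2$ as soon as $c>2+\sqrt{3}$ in the first route, and $c-\log m>c/2$ as soon as $c>2\log m$ in the second. Fixing such a $c$ and combining with Step~1 yields $\pp_\star\big(d_X(\star,\omega(t))\geq ct\big)\lesssim e^{-ct/2}$, as claimed. The only mildly delicate point, and the one I would flag as the ``obstacle'', is this bookkeeping of constants: the \emph{same} $c$ must appear in the threshold $ct$ and in the rate $e^{-ct/2}$, which forces $c$ to be chosen sufficiently large rather than arbitrary; once this is observed the argument is entirely routine.
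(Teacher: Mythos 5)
Your argument does prove the lemma as it is literally worded (``there exists $c$''), but it is a genuinely different route from the paper's, and the difference is not cosmetic. By lifting to $\hh^2$ and using $d_X(\star,\omega(t))\le d_\hh(0,\widetilde\omega(t))$ you throw away precisely the feature of $X$ that the lemma encodes: on the finite-volume surface the Brownian motion is recurrent and $d_X(\star,\omega(t))$ is typically tiny compared to $t$, whereas on $\hh^2$ the motion escapes at unit speed, so $\pp_0\lrpar{d_\hh(0,\widetilde\omega(t))\ge ct}\to 1$ for every $c<1$. Hence your approach can only ever deliver the estimate for large $c$ (as you yourself note, $c$ must be taken large). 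The paper's proof works directly on $X$ and is stronger: letting $f$ be the density of the law at time $1$ (bounded by Li--Yau, hence in $L^2(X)$) and $\Pi_t$ the heat semigroup, one writes $\pp_\star\lrpar{d_X(\star,\omega(t))\ge ct}\le\int_{V_t}\Pi_t f\le\norm{\Pi_t f}_{L^2}\vol(V_t)^{1/2}$ with $V_t=\set{x\in X:\ d_X(x,\star)\ge ct}$; the $L^2$ norm stays bounded (spectral gap, finite volume), while $\vol(V_t)\lesssim e^{-ct}$ because the only points far from $\star$ lie deep in the cusps. This yields the bound for \emph{every} $c>0$, with the rate $e^{-ct/2}$ coming from $\vol(V_t)^{1/2}$ --- and that is the form actually used in the proof of Lemma \ref{lem:well spread2}, where the constant $c$ is fixed \emph{small} relative to $\delta$. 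Read with that intended quantifier, your proof has a genuine gap, and it cannot be repaired within the lifting strategy; you need an argument that sees the geometry of $X$ (spectral/volume input as above, or some other use of recurrence).

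Two minor slips, neither fatal for the literal statement: with the ``two equal halves'' split of $(r-t)^2/(4t)$ the rate you extract from \eqref{eq:davies} is $(c-1)^2/8$, not $(c-1)^2/4$ (the latter does follow if you instead expand $(r-t)^2$ with $r-t=(c-1)t+s$); and in the alternative route via Lemma \ref{lem:martingale} the natural threshold is $c>\log m$ rather than $c>m$ (the latter is of course sufficient since $m>1$). These only move the numerical threshold for $c$ and do not affect your argument for the statement as stated.
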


\begin{proof}[Proof (sketch)]
Let $f$ be the density of the diffusion of $\delta_\star$ at time 1, relative to the hyperbolic area element on $X$. The classical Li-Yau estimates on the heat kernel (see e.g. \cite[\S B.7]{candel conlon}) imply that $f\in L^\infty(X)\subset L^2(X)$. 
Let $(\Pi_t)_{t>0}$ be the Brownian semigroup on $X$, and 
put $V_t = \set{x\in X, \ d_X(x, \star)\geq ct}$. We have to show that $\int_{V_t} \Pi_t f $ is exponentially small with $t$.  For this, using the Schwarz inequality we write 
$$\abs{\int_{V_t} \Pi_t f}\leq \norm{\Pi_t f}_{L^2(X)} \vol(V_t)^{1/2},$$
 and we conclude using the fact that $\norm{\Pi_t f}_{L^2(X)}$ converges, while $\vol(V_t)$ decreases exponentially with $t$. 
 \end{proof}
 
\begin{proof}[Proof of Lemma \ref{lem:well spread2} (sketch)]
We lift the situation to $\hh$, and look at Brownian paths issued from 0. Fix a constant $c>0$. 
By the estimates \eqref{eq:davies} for the heat kernel on $\hh$ and the above lemma, the probability that 
$\omega(t)\notin A_\hh\lrpar{t(1-c),  t(1+c)}$ or 
$d_\hh(\omega(t),\Gamma )\geq ct$ decreases exponentially with $t$. 
We discard this set of paths, so that it is enough to work with $A' = 
  A\cap A_\hh\lrpar{t(1-3c),  t(1+3c)}$ instead of $A$.
    Now if we denote by $\widetilde \omega_t$ the concatenation of $\omega\rest{[0,t]}$ 
with a shortest path joining $\omega(t)$ to $\Gamma$, we infer that 
 $\pp\lrpar{\widetilde\omega_t\in A'}\leq \nu_t
 \lrpar{\bigcup_{a\in A'} B(a, ct)},$ 
 where $\nu_t$ is the law of Brownian motion at time $t$. We compare this probability 
 to the hyperbolic area of $\bigcup_{a\in A} B(a, ct)$   by using  \eqref{eq:davies} again, and get an estimate of the form 
  $\pp\lrpar{\widetilde\omega_t\in A'} \lesssim \exp((-\delta+5c) t)$. Finally,
  choosing the constant $c$  small enough with respect to $\delta$    finishes  the proof. 
\end{proof}

 \subsubsection{Length-based model}    Let $\mathrm{CG}_{t}(X)$ be the set of primitive 
 geodesics  of length  at most $t$ on $X$, and let $m_t^{\mathrm{length}}$
 be simply  the normalized counting measure on   $\mathrm{CG}_{t}(X)$.
    
  To apply Theorem \ref{thm:equidist}  we need to 
   to lift $m_t^{\mathrm{length}}$ to $\Gamma$. For this, we   choose a shortest path from $\star$ to 
  a given geodesic to make it a loop based at $\star$. Observe that the length of this path is uniformly bounded. 
   Indeed, recall from Proposition \ref{prop:lipschitz} that there are neighborhoods $C_i$ of the cusps such that each geodesic must enter the compact region 
  $X\setminus \bigcup C_i$.

   We then infer that the lifted measure is supported 
  in an annulus of bounded width about the circle of radius $t$ in $\Gamma$.    Also, the lifted measure is well spread in $\Gamma$ in 
  the sense of Lemmas \ref{lem:well spread} and \ref{lem:well spread2} because of the well-known asymptotic estimate
 $\#\mathrm{CG}_{t}(X)\sim \frac{e^t}{t}$ as $t\cv\infty$. 
 This  estimate  originates  in the works of Huber and Selberg  
   (references covering the non-compact case include   \cite{hejhal, iwaniec})

  \subsubsection{Equidistribution} 
 We now  choose any of the above 
   models for random geodesics, and denote by $(m_t)$ the associated family of measures on $\mathrm{CG}(X)$.
Let us recall the statement of 
   the equidistribution theorem on $\La$ associated to random closed geodesics (Theorem \ref{theo:equidist}).

  \begin{thm}\label{thm:equidist geodesics}
       Let $X$ be a hyperbolic Riemann surface of finite type, and 
   $(\rho_\la)_{\la\in \La}$ be a holomorphic family of representations of $G=\pi_1(X)$,  satisfying (R1, R2, R3). 
   Let $T_{\rm bif} = dd^c\chi_{\rm Brown}$ be the natural bifurcation current. 
 
    Let $(t_n)$ be a  sequence of positive numbers such that for every $c>0$ the series $\sum e^{-ct_n}$ converges.  
   Let $(\gamma_n)$ be a random 
   sequence of closed geodesics on $X$,   according to the product measure $\prod m_{t_n}$. Then almost surely we have that 
   $$\unsur{2 \length(\gamma_n)} [   Z(\gamma_n, t)] \underset{n\cv\infty}\longrightarrow \tbif.$$ 
   \end{thm}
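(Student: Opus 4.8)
The plan is to first establish the ``fixed representation'' version of the statement, namely Theorem~\ref{theo:counting} of the introduction, and then to deduce Theorem~\ref{thm:equidist geodesics} from it by exactly the Fubini-plus-compactness argument already carried out in \S\ref{subs:subharmonic}. For the deduction, fix once and for all a lift of each closed geodesic to an element of $\Gamma$ obtained by closing it up along a path from $\star$, so that each of our models gives a probability measure on $\Gamma$ (still denoted $m_t$), and set $m = \prod_n m_{t_n}$. Consider the psh potentials
$$u(\la,\boldsymbol\gamma,n) = \unsur{2\length(\gamma_n)}\log\abs{\tr^2(\rho_\la(\gamma_n))-t}.$$
Since $\chi_{\rm Brown}(\rho_\la)>0$, on the full-measure event where $\abs{\tr^2(\rho(\gamma_n))}\cv\infty$ the additive constant $t$ is asymptotically negligible, so Theorem~\ref{theo:counting} gives that for every \emph{fixed} $\la$ one has $u(\la,\boldsymbol\gamma,n)\cv\chi_{\rm Brown}(\rho_\la)$ for $m$-a.e. $\boldsymbol\gamma$; Fubini's theorem then yields, for $m$-a.e. $\boldsymbol\gamma$, convergence for $d\la$-a.e. $\la$. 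The sequence $u(\cdot,\boldsymbol\gamma,n)$ is moreover locally uniformly bounded above on $\La$ for large $n$: indeed $\log\abs{\tr^2(\rho_\la(\gamma_n))-t}\leq 2\log\norm{\rho_\la(\gamma_n)}+O(1)\leq\beta_{\rho_\la}\,d(\gamma_n)+O(1)$ by Corollary~\ref{cor:compar}, $d(\gamma_n)$ and $\length(\gamma_n)$ are comparable along $m$-a.e. $\boldsymbol\gamma$ (see below), and $\beta_{\rho_\la}$ is locally uniform. The standard compactness theorems for subharmonic functions (Theorems 3.2.12, 3.2.13 and 3.4.14 in \cite{hormander}) then upgrade the a.e. convergence to convergence in $L^1_{\rm loc}(\La)$, which is Theorem~\ref{thm:equidist geodesics}.

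It remains to prove Theorem~\ref{theo:counting}, i.e. that for a fixed non-elementary parabolic $\rho$ one has $\unsur{2\length(\gamma_n)}\log\abs{\tr^2(\rho(\gamma_n))}\cv\chi_{\rm Brown}(\rho)$ almost surely. As the $\gamma_n$ are independent, by the Borel--Cantelli lemma it suffices to show that for every $\e>0$ there is $c(\e)>0$ such that
$$m_{t_n}\lrpar{\set{\gamma\in\Gamma, \ \abs{\unsur{2\length(\gamma)}\log\abs{\tr^2(\rho(\gamma))}-\chi_{\rm Brown}(\rho)}>\e}}\lesssim e^{-c(\e)t_n},$$
which is summable by the hypothesis on $(t_n)$. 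I would cover this bad set by $B':=\set{\gamma, \ \length(\gamma)<(1-\delta)t_n}$ and the remaining part, for a small $\delta=\delta(\e)>0$. For the length-based model, $m_{t_n}(B')\lesssim t_n e^{-\delta t_n}$ by the asymptotics $\#\mathrm{CG}_s(X)\sim e^s/s$; for the Thurston and Brownian models $m_{t_n}(B')$ is exponentially small because, by Theorem~\ref{thm:counting} applied to $\rho_{\rm can}$, all but an exponentially small proportion of $\gamma\in B_\Gamma(r)$ have $\ell(\gamma)$ within $O(\e r)$ of $r$, and the lifted measure $m_{t_n}$ is ``well spread'' in $\Gamma$ in the sense of Lemmas~\ref{lem:well spread} and \ref{lem:well spread2}. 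Off $B'$ one has, outside a further exceptionally small event, $d(\gamma_n)=t_n+o(t_n)$ and $(1-\delta)t_n\leq\length(\gamma_n)\leq d(\gamma_n)$, so for $\delta$ small the trace condition forces $\abs{\unsur{2d(\gamma)}\log\abs{\tr^2(\rho(\gamma))}-\chi_{\rm Brown}(\rho)}>\e/2$ with $\gamma\in B_\Gamma(t_n(1+o(1)))$. By Theorem~\ref{thm:counting} there are at most $e^{(1-c(\e))t_n}$ such elements of $\Gamma$, so once more by the well-spread Lemmas~\ref{lem:well spread}, \ref{lem:well spread2} (or, in the length model, directly since $m_{t_n}$ is essentially counting measure on the $\sim e^{t_n}/t_n$ geodesics of length $\leq t_n$) this set of $\gamma$'s has $m_{t_n}$-measure $\lesssim e^{-c(\e)t_n}$.

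The main obstacle, and the reason Theorem~\ref{thm:equidist geodesics} does not follow formally from Corollary~\ref{cor:fixed param}, is precisely the model-dependent control invoked twice above: how far the lifted $\gamma_n$ lies from the circle of radius $t_n$ in $\Gamma$, and the comparison between $\length(\gamma_n)=\ell(\rho_{\rm can}(\gamma_n))$ and $d(\gamma_n)=d_\hh(0,\gamma_n\cdot 0)$. In the length-based model the closing path from $\star$ has uniformly bounded length --- every closed geodesic meets the fixed compact region $X\setminus\bigcup_i C_i$ --- so $d(\gamma_n)=\length(\gamma_n)+O(1)$ and $m_{t_n}$ is carried by an annulus of bounded width about the circle of radius $t_n$. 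In the Thurston (resp. Brownian) model the closing path has length $d_X(\star,\gamma_v(t_n))$ (resp. $d_X(\star,\omega(t_n))$), which is $O(\log t_n)$ outside an event of exponentially small $m_{t_n}$-measure --- Sullivan's theorem recalled in \S\ref{subs:models} in the geodesic case, the hitting-time estimate proved there in the Brownian case --- whence $d(\gamma_n)=t_n+o(t_n)$ with overwhelming probability; combining this with Theorem~\ref{thm:counting} for $\rho_{\rm can}$ to pin down $\ell(\gamma_n)$ gives $\length(\gamma_n)=t_n+o(t_n)$ with overwhelming probability, which is exactly what the preceding two paragraphs require. With these comparisons in hand, the remaining estimates run parallel to the proofs of Theorem~\ref{thm:counting} and Corollary~\ref{cor:fixed param}, together with the Margulis and Huber--Selberg counting asymptotics.
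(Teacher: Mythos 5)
Your proposal is correct and follows essentially the same route as the paper: the fixed-representation statement (Theorem~\ref{theo:counting}) is obtained by Borel--Cantelli from the counting estimate of Theorem~\ref{thm:counting} combined with the well-spread Lemmas~\ref{lem:well spread}, \ref{lem:well spread2} (or the Huber--Selberg asymptotics in the length model), and the equidistribution statement is then deduced exactly as in \S\ref{subs:subharmonic} via Fubini and psh compactness. The only difference is that you spell out the comparisons between $\length(\gamma_n)$, $d(\gamma_n)$ and $t_n$, which the paper treats more tersely.
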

  
  The proof of the theorem is identical to that of Theorem \ref{thm:equidist}, using the fact that the lifted measures $m_{t_n}$ are well spread on $\Gamma$.  Indeed for fixed $\lambda$, a Borel-Cantelli argument based on 
    Theorem \ref{thm:counting} together with 
  Lemma \ref{lem:well spread} (Thurston model), Lemma \ref{lem:well spread2} (Brownian Thurston model) or the estimate 
       $\#\mathrm{CG}_{t}(X)\sim \frac{e^t}{t}$ (length-based model) implies that 
       almost surely 
 $$\unsur{2\length(\gamma_n)} \log\abs{\tr^2(\rho (\gamma_n))}\underset{n\cv\infty}{\longrightarrow}
\chi_{\rm Brown}(\rho), $$  which is the statement of 
Theorem \ref{theo:counting}.   Theorem \ref{thm:equidist geodesics} (i.e. Theorem \ref{theo:equidist}) then follows exactly 
as in \S\ref{subs:subharmonic}.

 \section{Further remarks, open problems}\label{sec:further}
 
 \subsection{} A first natural problem is to study the dependence of the natural Lyapunov exponent $\chi_{\rm Brown}$ 
 as a function of the Riemann surface structure $X$. It is likely that this function is continuous in $(\rho, X)$. It would be interesting to study the possible subharmonicity properties of this function as $X$ varies in Teichm\"uller space. 
 
 \subsection{} (cf. \cite[\S 5.2.3]{kleinbif}). The varieties  $Z(\gamma, 4)$ appearing in    the equidistribution theorems are made  
  of accidental parabolics and 
new relations. The question is to determine if  one  of the two types dominates.
At first sight one might guess that the varieties of  accidental   relations $\set{\la, \rho_\la(\gamma)=\mathrm{id}}$ 
are of codimension 3 in the representation variety. Unfortunately this is wrong. 
Indeed, if $\tr^2(\rho(\gamma)) =0$ (a codimension 1 condition)
 then $\rho(\gamma^2) = \mathrm{id}$. More generally for any $k$ 
we can obtain hypersurfaces  of representations such that $\rho(\gamma^k) = \mathrm{id}$. On the other hand, 
as observed in \S\ref{subs:models}, random geodesics are generically primitive, so this phenomenon is negligible 
(see \cite{lubotzky} for a wide generalization of this fact). 

Is it true that for a generic sequence $(\gamma_n)$ of closed geodesics, $\set{\la, \rho_\la([\gamma_n]) = \mathrm{id}}$ is asymptotically of codimension greater than 1 in the representation variety? 

\subsection{} Our results may be recast in the more general framework of holomorphic families of 
$\PSL$-cocycles over a  discrete or continuous time dynamical system, endowed with some invariant measure. 
A general question in this area of research is how the values of this cocycle over the set of periodic orbits approximates the Lyapunov exponent 
(see e.g. \cite{kalinin}).
What Theorem \ref{theo:counting} says is that we can have very precise such results (with exponential deviation estimates) in the 
particular case of the geodesic flow in constant negative curvature. It would be interesting to know how this can be generalized to a 
more general (say, uniformly hyperbolic) base dynamics. One possible way of doing this would be to use the theory of  Markov driven random products of 
matrices, which was developed in particular by Guivarc'h and his collaborators (see \cite{guivarch} for a review). 

\subsection{} What can be said about the distribution of $[Z(\gamma_n, 4)]$ when the $\gamma_n$ are {\em simple} 
closed geodesics on $X$?

\end{document}